\documentclass[reqno]{amsart}
\usepackage{amssymb,amsmath,amsfonts}
\usepackage{hyperref} %para PDFLaTeX
\newtheorem{teor}{Theorem}[section]
\newtheorem{lema}[teor]{Lemma}

\newtheorem{coro}[teor]{Corollary}
\theoremstyle{definition}
\newtheorem{defi}[teor]{Definition}

\newtheorem{hipo}[teor]{Hypothesis}

\newtheorem{nota}[teor]{Remark}

\numberwithin{equation}{section}
%%%%%%%%%%%%%%%%%%%%%%%%%%%
\newcommand{\F}{\mathbb F}
\newcommand{\R}{\mathbb R}

\newcommand{\K}{\mathbb{K}}

\newcommand{\C}{\mathbb{C}}
\newcommand{\N}{\mathbb{N}}

\newcommand{\M}{\mathbb{M}}
\newcommand{\s}{\mathbb{S}}
\newcommand{\mC}{\mathcal{C}}

\newcommand{\mB}{\mathcal{B}}

\newcommand{\mM}{\mathcal{M}}

\newcommand{\mK}{\mathcal{K}}

\newcommand{\ep}{\varepsilon}
\newcommand{\J}{J^{-1}\!}

\newcommand{\mI}{\mathcal{I}}
\newcommand{\mJ}{\mathcal{J}}
\newcommand{\mL}{\mathcal{L}}
\newcommand{\mQ}{\mathcal{Q}}

\newcommand{\W}{\Omega}
\newcommand{\w}{\omega}

\newcommand{\G}{\Gamma}
\newcommand{\Lk}{\mathcal{L}_{\K}}

\newcommand{\bv}{\mathbf v}

\newcommand{\bw}{\mathbf w}
\newcommand{\bx}{\mathbf x}
\newcommand{\by}{\mathbf y}
\newcommand{\bz}{\mathbf z}
\newcommand{\bu}{\mathbf u}

\newcommand{\bcero}{\mathbf 0}
\newcommand{\wit}{\widetilde}
\newcommand{\ws}{\w{\cdot}s}
\newcommand{\wt}{\w{\cdot}t}
\newcommand{\wD}{\w{\cdot}d}

\newcommand{\n}[1]{\left\|#1\right\|}
\newcommand{\Frac}[2]{\displaystyle\frac{#1}{#2}}

\newcommand{\lsm}{\left[\begin{smallmatrix}}
\newcommand{\rsm}{\end{smallmatrix}\right]}

\DeclareMathOperator{\cls}{cls}

\DeclareMathOperator{\Ima}{Im}
\DeclareMathOperator{\Rea}{Re}

\DeclareMathOperator{\Supp}{Supp}

%%%%%%%%%%%%%%%%%%%%%%%%%%%
\begin{document}
\title[Non-Atkinson perturbations of
linear Hamiltonian systems]
{Non-Atkinson perturbations of nonautonomous
linear Hamiltonian systems:
exponential dichotomy and nonoscillation}
%{Exponential dichotomy and nonoscillation in parametric families of nonautonomous
%linear Hamiltonian systems without Atkinson condition}
\author[C. N\'{u}\~{n}ez]{Carmen N\'{u}\~{n}ez}
\author[R. Obaya]{Rafael Obaya}
\address{Departamento de Matem\'{a}tica Aplicada, EII,
Universidad de Valladolid, Paseo del Cauce 59, 47011 Valladolid, Spain.}
%\curraddr{}
\email[Carmen N\'{u}\~{n}ez]{carnun@wmatem.eis.uva.es}
\email[Rafael Obaya]{rafoba@wmatem.eis.uva.es}
\thanks{Partly supported by MINECO/FEDER (Spain)
under project MTM2015-66330-P and by European Commission under project
H2020-MSCA-ITN-2014.}
%\keywords{}
%\subjclass[2010]{
%37B55, %Nonautonomous dynamical systems
%34D05, %Asymptotic properties
%34D09, %Dichotomy, trichotomy
%%34K20, %Stability theory for FDEs
%%37B25, %Lyapunov functions and stability; attractors, repellers
%%34K14, %Almost and pseudo-periodic solutions for FDEs
%%92B20  %Neural networks, artificial life and related topics
%}
\date{}
\begin{abstract}
We analyze the presence of exponential dichotomy (ED) and of global
existence of Weyl functions $M^\pm$
for one-parametric families of finite-dimensional
nonautonomous linear Hamiltonian
systems defined along the orbits of a compact metric space,
which are perturbed from an initial one in a direction which does not satisfy
the classical Atkinson condition: either they do not have ED for any
value of the parameter; or they have it for at least all the nonreal values,
in which case the Weyl functions exist and are Herglotz.
When the parameter varies in the real line,
and if the unperturbed family satisfies the properties
of exponential dichotomy and global existence
of $M^+$, then these two properties persist
in a neighborhood of 0 which agrees either with the whole real line or with
an open negative half-line; and in this last case, the ED fails at the
right end value. The properties of ED and of global existence of $M^+$ are
fundamental to guarantee the solvability of classical
minimization problems given by linear-quadratic control processes.
\end{abstract}
\maketitle
%%%%%%%%%%%%%%%%%%%%%%%%%%%%%%%%%%%%%%%%%%%%%%%%%%%%%%%%%%%%%%%%%%%%%%%%%%%%%%%%%%%%%
%%%%%%%%%%%%%%%%%%%%%%%%%%%%%%%%%%%%%%%%%%%%%%%%%%%%%%%%%%%%%%%%%%%%%%%%%%%%%%%%%%%%%
%%%%%%%%%%%%%%%%%%%%%%%%%%%%%%%%%%%%%%%%%%%%%%%%%%%%%%%%%%%%%%%%%%%%%%%%%%%%%%%%%%%%%
%%%%%%%%%%%%%%%%%%%%%%%%%%%%%%%%%%%%%%%%%%%%%%%%%%%%%%%%%%%%%%%%%%%%%%%%%%%%%%%%%%%%%
\section{Introduction}
The theory of exponential dichotomy has played a central role in the study of finite
and infinite dimensional dynamical systems, including those arising in the
analysis of nonautonomous differential
equations. In the linear case, the occurrence of exponential dichotomy
is directly connected with the invertibility of the associated operators.
And, in the nonlinear case, the robustness of the exponential dichotomy of
the linearized flows converts this property in an essential tool
to analyze the behavior of the solutions.
\par
In particular, the exponential dichotomy is also fundamental
in the description of invariant manifolds,
perturbation problems, bifurcation patterns,
homoclinic trajectories and spectral theory, among many other questions. The
works of Coppel~\cite{copp, copp1}, Massera and Schaeffer \cite{masc},
Hale~\cite{hale}, Sacker and Sell
\cite{sase2, sase3, sase4, sase5, sase7, sase8}, Sell \cite{sell3},
Chow and Hale \cite{chha}, Palmer \cite{palm3,palm4}, Johnson~\cite{john20},
Vanderbauwhede and van Gils \cite{vava}, Vanderbauwhede \cite{vand},
Henry \cite{henr}, Johnson and Yi~\cite{joyi}, Chow and
Leiva \cite{chle2, chle3}, Shen and Yi \cite{shyi4},
Chicone and Latushkin \cite{chla}, Pliss and Sell
\cite{plse}, and Johnson {\em et al.} \cite{jonnf}
(which compose an incomplete list) provide an exhaustive
analysis of all these topics.
\par
In the classical field of finite-dimensional
linear Hamiltonian differential equations
with periodic time-dependent coefficients, the existence and robustness
of the exponential dichotomy is directly related to
the regions of instability and total instability studied by
Gel'fand and Lidski\u{\i} \cite{geli}
and Yakubovich \cite{yaku3, yaku4}. In the more general setting
of nonautonomous linear Hamiltonian systems with bounded and uniformly continuous
coefficients, these questions are extensively analyzed in the book Johnson {\em et al.}
\cite{jonnf}, which presents a unified version of many previous works due to the
authors of the book and to many other researchers.
In particular, in this book, the applicability of the exponential dichotomy results
to the study of nonautonomous linear-quadratic control
processes is extensively analyzed. We will now explain briefly a point of this
analysis, which is central in order to understand the scope of the present
paper.
\par
Let us consider the control problem
\begin{equation}\label{1.control}
 \bx' = A_0(t)\,\bx+ B_0(t)\,\bu\,,
\end{equation}
where $\bx\in\R^n$ and $\bu\in\R^m$, together with the quadratic form
(supply rate)
\[
 \mQ(t,\bx,\bu):=\frac{1}{2}\left(\langle\bx, G_0(t)\,\bx\rangle
 +2\langle\bx,g_0(t)\,\bu\rangle+\langle\bu, R_0(t)\,\bu\rangle\right).
\]
The functions $A_0$, $B_0$, $G_0$, $g_0$, and $R_0$
are assumed to be bounded and uniformly
continuous functions on $\R$, with values in the sets of real matrices
of the appropriate dimensions. In addition, $G$ and
$R$ are symmetric, and $R(t)\ge\rho I_m$ for a common $\rho>0$ and all $t\in\R$.
We also fix $\bx_0\in\R^n$ and introduce the quadratic functional
\[
 \mJ_{\bx_0}(\bx,\bu):= \int_0^{\infty} \mQ(t,\bx(t),\bu(t))\,dt
\]
evaluated on the so-called {\em admissible pairs\/}
$(\bx,\bu)\colon[0,\infty)\to\R^n\times\R^m$; i.e. those for which
$\bu$ belongs to $L^2((0,\infty),\R^m)$ and the solution $\bx(t)$ of
\eqref{1.control} for this control with $\bx(0)=\bx_0$
belongs to $L^2((0,\infty),\R^n)$.
The problem to consider is that of minimizing $\mJ_{\bx_0}$ relative to the
set of admissible pairs.
\par
By means of a standard construction (the so called hull or Bebutov
construction, which we will summarize in Section \ref{sec2}),
this problem can be included in a family, given by the control problems
\begin{equation}\label{7.controlAB2}
 \bx'=A(\wt)\,\bx+B(\wt)\,\bu
\end{equation}
and by the functionals
\[
\begin{split}
 \mQ_\w(t,\bx,\bu)&:=\Frac{1}{2}\left(\langle\bx, G(\wt)\,\bx\rangle
 +2\langle\bx,g(\wt)\,\bu\rangle+\langle\bu, R(\wt)\,\bu\rangle\right),\\
 \mJ_{\bx_0,\w}(\bx,\bu)&:=
 \int_0^{\infty} \mQ_\w(t,\bx(t),\bu(t))\,dt
\end{split}
\]
for $\w\in\W$ and $\bx_0\in\R^n$.
Here, $\W$ is a compact metric space admitting a continuous flow,
$\wt$ is the orbit of a point $\w\in\W$, $A$, $B$, $G$, $g$, and $R$
are bounded and uniformly continuous matrix-valued functions on $\W$,
$G$ and $R$ are symmetric, and $R>0$. It is important to point out that
$\W$ is minimal in the case of recurrence of the initial coefficients,
which includes the autonomous, periodic, quasi-periodic, almost-periodic and
almost-automorphic cases.
\par
The Pontryagin Maximum Principle relates the problem of
minimizing $\mJ_{\bx_0,\w}$ to the properties of the
family of linear Hamiltonian systems
\begin{equation}\label{1.hamil}
 \bz'=H(\wt)\,\bz\,,\qquad \w\in\W\,,
\end{equation}
where $\bz=\lsm\bx\\\by\rsm$ for $\bx,\by\in\R^n$ and
\[
 H=\left[\begin{array}{cc} A-B\,R^{-1}g^T&\;
                        B\,R^{-1}B^T\\
                   G-g\,R^{-1}g^T &\; -A^T\!+g\,R^{-1}B^T
                   \end{array}\right].
\]
More precisely, under a certain uniform stabilization condition,
it turns out that the minimization problem for each one of the functionals
$\mJ_{\w,\bx_0}$ is solvable if the family \eqref{1.hamil} admits
exponential dichotomy and if, in addition, for every $\w\in\W$,
the Lagrange plane $l^+(\w)$
composed by those initial data $\bz_0$ giving rise to a solutions
bounded at $+\infty$ admits a basis whose vectors compose the columns
of a matrix $\lsm I_n \\M^+(\w)\rsm$.
In other words, if the family admits
exponential dichotomy (or the {\em frequency condition\/} holds)
and the Weyl function $M^+$ globally exists
(or the {\em nonoscillation condition\/} is satisfied).
(For further purposes we
point out that the Weyl function $M^-$ is associated in the
analogous way to the Lagrange plane
$l^-(\w)$ composed by the initial data $\bz_0$ of the solutions
bounded at $-\infty$.)
In addition, if this is the case, the unique
minimizing pair $(\wit\bx(t),\wit\bu(t))$ for
$\mJ_{\w,\bx_0}$ is determined from the solution $\lsm\wit\bx(t)\\\wit\by(t)\rsm$
of \eqref{1.hamil} with initial data $\lsm\bx_0\\M^+(\w)\,\bx_0\rsm$
by means of the feedback rule
$\wit\bu(t)=R^{-1}(\wt)\,B^T(\wt)\,\wit\by(t)- R^{-1}(\wt)\,g^T(\wt)\,\wit\bx(t)$.
And, as a matter of fact, both situations (solvability and \lq\lq frequency
plus nonoscillation conditions\rq\rq) are equivalent in many dynamical
situations, as in the case of minimality of $\W$. This result,
first published in Fabbri {\em et al.} \cite{fajn4} and \cite{fjnn2} (and
which is extremely detailed in Chapter 7 of \cite{jonnf}),
constitutes a nonautonomous version of the Yakubovich Frequency
Theorem for the periodic case, which appears in \cite{yaku3,yaku4}.
\par
The historical and practical importance of the above
result justifies the interest of this paper, whose central goal is
to analyze the presence and preservation of the exponential dichotomy
and the nonoscillation condition in parametric families of linear Hamiltonian
systems.
\par
In what follows, we explain simultaneously the structure of the paper and its main
achievements. Section \ref{sec2} summarizes some basic notions on topological
dynamics, and explains with some detail the concepts of exponential dichotomy, uniform
weak disconjugacy, and rotation number, which are fundamental in the statements
and proofs of the main results. We also summarize some of the many results
relating these concepts.
\par From now on, $\W$ is a compact metric space with a continuous flow,
and we represent by $\{\wt\,|\;t\in\R\}$ the orbit of the element $\w\in\W$.
In addition, $H_1,H_2,H_3$
and $\Delta$ are continuous $n\times n$ matrix-valued functions
on $\W$, and $H_2,H_3$ and $\Delta$ take symmetric values.
\par
In Section \ref{sec3} we consider the
families of linear Hamiltonian systems
\begin{equation}\label{1.hamilper}
\bz'=H^\lambda(\wt)\,\bz\,,\quad \text{where }\;
 H^\lambda:=\left[\begin{array}{cc} H_1&
 H_3-\lambda\Delta\\
 H_2&-H_1^T\end{array}\right]
\end{equation}
for $\w\in\W$. The parameter $\lambda$ varies in $\C$. If the
matrix-valued function $\G:=\lsm 0_n&0_n\\0_n&\Delta\rsm$ satisfies
the so-called Atkinson definiteness condition (see Atkinson~\cite{atki}),
then the systems
\eqref{1.hamilper}$^\lambda$ satisfy the frequency condition
and admit both Weyl functions
at least for $\lambda\in\C-\R$. This is an already classical
result due to Johnson \cite{john1}. Here, we analyze the problem
for $\Delta>0$ without imposing the Atkinson hypothesis, and prove that two
dynamical possibilities arise: either the families
\eqref{1.hamilper}$^{\lambda}$ have exponential dichotomy
for (at least) all $\lambda\in\C-\R$,
in which case the Weyl functions $M^\pm(\w,\lambda)$ globally exist
and are Herglotz functions; or
the family~\eqref{1.hamilper}$^\lambda$
does not have exponential dichotomy
for any $\lambda\in\C$, which turns out to be
equivalent to the existence of a point $\w\in\W$ and
a nonzero bounded function of the form
$\bz(t,\w)=\lsm\bz_1(t,\w)\\\bcero\rsm$
which solves the system~\eqref{1.hamilper}$^\lambda$
corresponding to $\w$ for all $\lambda\in\C$. These results (excepting
the existence of Weyl functions) are easily
transferable to the families
\begin{equation}\label{1.hamilper2}
\bz'=\wit H^\lambda(\wt)\,\bz\,,\quad \text{where }\;
 \wit H^\lambda:=\left[\begin{array}{cc} H_1&
 H_3\\
 H_2-\lambda\Delta&-H_1^T\end{array}\right].
\end{equation}
In this case, the second dynamical situation is equivalent to
the existence of a point $\w\in\W$ and
a nonzero bounded function of the form
$\bz(t,\w)=\lsm\bcero\\\bz_2(t,\w)\\\rsm$ which solves
the system~\eqref{1.hamilper2}$^\lambda$
corresponding to $\w$ for all $\lambda\in\C$. In particular,
all the systems corresponding to $\w$ are {\em abnormal\/} systems.
This type of systems have been extensively studied during the last
decades: see e.g.~Reid~\cite{reid3,reid8},
Kratz~\cite{kratz}, \v{S}epitka and \v{S}imon Hilscher \cite{sesi1,sesi2,sesi3}
Fabbri~{\em et al.}~\cite{fnno},
Johnson {\em et al.}~\cite{jnno2}, and references
therein.
\par
In Section \ref{sec4} we go further in the analysis of the families
\eqref{1.hamilper2} with $H_3\ge 0$ and $\Delta>0$.
More precisely, we assume that \eqref{1.hamilper2}$^0$
has exponential dichotomy (ED) and satisfies the nonoscillation condition
(NC), and define
\[
 \mI:=\{\lambda\in\R\,|\;\eqref{1.hamilper2}^\lambda \text{ has
 ED and satisfies NC\/}\}\,.
\]
Under the assumption of existence of
an ergodic measure on $\W$ with full topological support
(which holds at least in the case of the minimality of $\W$),
we prove among other properties
that $\mI$ is either the whole line or an open negative half-line; and that,
in addition, if $\mI=(-\infty,\lambda^*)$ for a real $\lambda^*$, then the family
\eqref{1.hamilper2}$^{\lambda^*}$ does not have exponential dichotomy.
This result improves and extends a previous theorem of Johnson
{\em et al.}~\cite{jnuo}. In its proof a fundamental
role is played by the occurrence of uniform weak disconjugacy and by the properties
of the rotation number: both properties are fundamental
to determine the presence of exponential dichotomy, in different
settings. The reader is referred to
Johnson {\em et al.}~\cite{jono2}, Fabbri {\em et al.} \cite{fajn5,fjnn1},
Johnson {\em et al.}~\cite{jnuo, jnno} and Chapter 5 of \cite{jonnf} for an
in-depth analysis of the uniform weak disconjugacy property, and to
Johnson~\cite{john1}, Novo {\em et al.}~\cite{nono},
Fabbri {\em et al.} \cite{fajn1,fajn2} and Chapter 2 of \cite{jonnf} for
the definition and main properties of the rotation number.
The result concerning the shape and properties of $\mI$ is finally extended to the case
in which the base flow is distal.
\par
This paper is dedicated to the memory of George Sell. Among his large
number of achievements, the development of the theory of
exponential dichotomies for nonautonomous
dynamical systems given by
 skew-product flows on vector
bundles with compact base, is more than fundamental in the
work of the authors of this paper.
%%%%%%%%%%%%%%%%%%%%%%%%%%%%%%%%%%%%%%%%%%%%%%%%%%%%%%%%%%%%%%%%%%%%%%%%%%%%%%%%%%%%%
%%%%%%%%%%%%%%%%%%%%%%%%%%%%%%%%%%%%%%%%%%%%%%%%%%%%%%%%%%%%%%%%%%%%%%%%%%%%%%%%%%%%%
%%%%%%%%%%%%%%%%%%%%%%%%%%%%%%%%%%%%%%%%%%%%%%%%%%%%%%%%%%%%%%%%%%%%%%%%%%%%%%%%%%%%%
%%%%%%%%%%%%%%%%%%%%%%%%%%%%%%%%%%%%%%%%%%%%%%%%%%%%%%%%%%%%%%%%%%%%%%%%%%%%%%%%%%%%%
\section{Preliminaries}\label{sec2}
All the contents of this preliminary section can be found in
Johnson {\em et al.}~\cite{jonnf}, where the reader will also find
a quite exhaustive list of references for the origin of the results
that we summarize here.
\par
Let us begin by establishing some notation. As usual, $\R$ and $\C$
represent the real line and the complex plane. If $\lambda\in\C$,
$\Rea\lambda$ and $\Ima\lambda$ are respectively its real and imaginary parts.
\par
Now let $\K$
represent $\R$ or $\C$. The set $\M_{d\times m}(\K)$ is the set
of $d\times m$ matrices with entries in $\mK$.
As usual, $\K^d:=\M_{d\times 1}(\K)$, and
$A^T$ is the transpose of the matrix $A$. The subset
$\s_d(\K)\subset\M_{d\times d}(\K)$ is composed by the symmetric matrices.
If $M\in\s_d(\R)$ is symmetric, the expressions
$M>0$, $M<0$, $M\ge 0$, and $M\le 0$
mean that it is positive definite, positive semidefinite,
negative definite, and negative semidefinite. If $\W$ is a topological
space and $M\colon\W\to\s_d(\K)$ is a map, $M>0$ means that $M(\w)>0$ for all
the elements $\w\in\W$, and $M<0$, $M\ge 0$, and $M\le 0$ have the
analogous meaning. It is also obvious what $M_1>M_2$, $M_1\ge M_2$,
$M_1<M_2$, and $M_1\le M_2$ mean. We represent by $I_d$ and $0_d$ the
identity and zero $d\times d$ matrices, and by $\bcero$ the null
vector of $\K^d$ for all $d$.
If $\bz\in\K^d$, its Euclidean norm is $\n{\bz}$, and if
$A\in \M_{d\times m}(\K)$, then $\n{A}$ is the
associated operator norm.
\par
A (real or complex) {\em Lagrange plane\/} is an $n$-dimensional
(real or complex) linear space such that
$\bz^TJ\,\bw=0$ for any pair of elements $\bz$ and $\bw$,
where $J=\lsm 0_n&-I_n\\I_n&0_n\rsm$.
A Lagrange plane $l$ {\em is represented by\/} $\lsm L_1\\ L_2\rsm$
if the column vectors of the matrix form a basis of the $n$-dimensional linear
space $l$. Hence, it can be also represented
by $\lsm I_n\\M\rsm$ if and only if $\det L_1\ne 0$, in which case the matrix
$M=L_2L_1^{-1}$ is symmetric.
\par
Now we will recall some basic concepts and properties of
topological dynamics and measure theory.
Let $\W$ be a complete metric space. A ({\em real and continuous})
{\em global flow\/} on $\W$ is a continuous map
$\sigma\colon\R\times\W\to\W,\; (t,\w)\mapsto\sigma(t,\w)$
such that
$\sigma_0=\text{Id}$ and $\sigma_{s+t}=\sigma_t\circ\sigma_s$ for each $s,t\in\R$,
where $\sigma_t(\w)=\sigma(t,\w)$.
The flow is {\em local\/} if the map $\sigma$
is defined, continuous, and satisfies the previous properties on an open subset
of $\R\times\W$ containing $\{0\}\times\W$.
\par
Let $(\W,\sigma)$ be a global flow.
The $\sigma$-{\em orbit\/} of a point $\w\in\W$
is the set $\{\sigma_t(\w)\,|\;t\in\R\}$.
Restricting the time to $t\ge 0$ or
$t\le 0$ provides the definition of {\em forward\/}
or {\em backward\/} $\sigma$-semiorbit.
A subset $\W_1\subset \W$ is {\em $\sigma$-invariant\/}
%(resp.~{\em positively $\sigma$-invariant\/}
%and {\em negatively $\sigma$-invariant\/})
if $\sigma_t(\W_1)=\W_1$ for every $t\in\R$.
%(resp.~$t\ge 0$ and $t\le 0$).
A $\sigma$-invariant subset $\W_1\subset\W$ is {\em minimal\/} if it is compact
and does not contain properly any other compact $\sigma$-invariant set;
or, equivalently, if each one of the two semiorbits of anyone of
its elements is dense in it. The continuous flow $(\W,\sigma)$ is
{\em minimal\/} if $\W$ itself is minimal. And the flow is {\em distal}
if,  whenever $\w_1\ne\w_2$, there exists $d(\w_1,\w_2)>0$
such that the distance
between $\w_1\cdot t$ and $\w_2\cdot t$ is greater that $d(\w_1,\w_2)$ for
all $t\in\R$.
\par
If the set $\{\sigma_t(\w)\,|\;t\ge 0\}$ is relatively compact,
the {\em omega limit set\/} of $\w_0$ is given
by those points $\w\in\W$ such that $\w=\lim_{m\to \infty}\sigma(t_m,\w_0)$
for some sequence $(t_m)\uparrow \infty$. This set is
nonempty, compact, connected and $\sigma$-invariant.
The definition and properties of the {\em alpha limit set\/} of $\w_0$
are analogous, working now with sequences $(t_m)\downarrow-\infty$.
\par
Let $m$ be a normalized Borel measure on $\W$; i.e.~a finite
regular measure defined on the Borel subsets of $\W$ and with
$m(\W)=1$. The measure $m$ is {\em $\sigma$-invariant\/}
if $m(\sigma_t(\W_1))=m(\W_1)$ for every Borel subset
$\W_1\subset\W$ and every $t\in\R$. If, in addition,
$m(\W_1)=0$ or $m(\W_1)=1$ for every $\sigma$-invariant subset
$\W_1\subset\W$, then the measure $m$ is {\em $\sigma$-ergodic\/}.
If $\W$ is a compact, a real continuous flow $(\W,\sigma)$
admits at least an ergodic measure.
%The measure $m$ is {\em concentrated\/} on $\W_1\subset\W$ if $m(\W_1)=1$.
And the {\em topological support\/} of $m$, $\Supp m$, is the complement
of the largest open set $O\subset\W$ for which $m(O)=0$. In the case that $\W$
is minimal, then it agrees with the topological support of any $\sigma$-ergodic
measure.
\par
In the rest of the paper, $(\W,\sigma)$ will be a real continuous
global flow on a compact metric space, and we will
denote $\wt=\sigma(t,\w)$. Recall that we represent by $\K$ either $\R$ or $\C$. Our
starting point is the family of linear Hamiltonian systems
\begin{equation}\label{2.hamil}
 \bz' = H(\wt)\,\bz\,,\quad\w\in\W\,,
\end{equation}
where $H\colon\W\to\mathfrak{sp}(n,\K)$ is continuous.
Here, $\mathfrak{sp}(n,\K)$ is the
Lie algebra of the infinitesimally symplectic matrices,
\[
 \mathfrak{sp}(n,\K):=\{H\in\M_{2n\times 2n}(\K)\,|\;H^T\!J+JH=0_{2n}\}\,,
\]
so that, since $J=\lsm 0_n&-I_n\\I_n&0_n\rsm$, $H$ takes the form
\[
 H(\w)=\left[\begin{array}{cc} H_1(\w)& H_3(\w)\\
 H_2(\w)&-H_1^T(\w)\end{array}\right],
\]
with $H^T_2=H_2$ and $H_3^T=H_3$.
Let $U(t,\w)$ denote the fundamental matrix solution of the system
\eqref{2.hamil} for $\w\in\W$ with $U(0,\w)=I_{2n}$.
The family \eqref{2.hamil} induces
a real continuous global flow
on the linear bundle $\W\times\K^{2n}$, given by
\begin{equation}\label{2.deftau}
 \tau_\K\colon\R\times\W\times\K^{2n}\to\W\times\K^{2n}\,,\quad(t,\w,\bz)\mapsto
 (\wt,U(t,\w)\,\bz)\,.
\end{equation}
This flow is called of {\em skew-product type} since its first
component agrees with the base flow, and {\em linear} since the second component
is a linear map for each $\w\in\W$.
\par
Frequently, a family of this type comes from a single nonautonomous Hamiltonian
system $\bz'=H_0(t)\,\bz$ by means of the well known Bebutov construction:
if $H_0$ is bounded and uniformly continuous on $\R$, then its {\em hull\/}
$\W$, which is defined by $\W:=\cls \{H_t\,|\;t\in\R\}$ (where
$H_t(s) =H_0(t+s)$ and the closure is taken in the compact-open topology),
is a compact metric space; and the time-translation defines
a continuous flow $\sigma$ on it. The base space $\W$
can hence be understood as the space in which the nonautonomous law varies with
respect to time. Under additional recurrence properties on $H_0$, the
base flow is minimal. This is the case if $H_0$ is almost periodic or
almost automorphic. Weaker conditions on
$H_0$ may provide a non minimal hull, which can contain
different minimal subsets. In some of these cases the solutions
of the different linear Hamiltonian systems of the family may
show a significatively different qualitative behavior.
\par
However, we will not assume that the family \eqref{2.hamil} comes from a
single equation by means of the Bebutov construction, which makes our analysis
more general.
\par
In the rest of this section we recall some basic concepts and some
associated properties related to families of the form~\eqref{2.hamil}.
The analysis contained in this paper either concerns these properties
(this is the case of the {\em exponential dichotomy\/},
{\em nonoscillation condition}, and {\em uniform weak disconjugacy})
or requires them as tools for the proofs (as in the case
of the {\em rotation number}).
\begin{defi}\label{2.defED}
The family~\eqref{2.hamil}
has {\em exponential dichotomy\/} (or {\em ED\/} for short)
{\em over\/} $\W$ if there exist
constants $\eta\ge 1$ and $\beta>0$ and a splitting $\W\times
\K^{2n}=L^+\oplus L^-$ of the bundle into the Whitney sum of
two closed subbundles such that
\begin{itemize}
 \item[-] $L^+$ and $L^-$ are invariant
 under the flow $\tau_\K$ given by \eqref{2.deftau} on $\W\times\K^{2n}$;
 that is, if $(\w,\bz)$ belongs to $L^+$ (or to $L^-$), so does
 $(\wt,U(t,\w)\,\bz)$ for all $t\in\R$.
 \item[-] $\n{U(t,\w)\,\bz} \le \eta\,e^{-\beta t}\n{\bz}\quad$ for
   every $t\ge 0$ and $(\w,\bz)\in L^+$.
 \item[-] $\n{U(t,\w)\,\bz} \le \eta\,e^{\beta t}\n{\bz}\quad\;\;\,$
   for every $t\le 0$ and $(\w,\bz)\in L^-$.
\end{itemize}
\end{defi}
We will omit the words \lq\lq over $\W$" when the family~\eqref{2.hamil}
has ED, since no confusion arises.
Let us summarize in the next list of remarks some
well-known fundamental properties
satisfied by a family of linear Hamiltonian systems which has ED. Detailed
proofs and the names of the authors of the results can be
found in Chapter 1 of \cite{jonnf}.
\begin{nota}\label{2.notasED}
(a)~The ED is unique (in the sense that so are
the subbundles $L^+$ and $L^-$), and it precludes the existence of
globally bounded solutions for any of the systems of the family
\eqref{2.hamil}. These assertions are also true
when the family of linear systems is not of Hamiltonian type:
see e.g.~Section 1.4.1 of \cite{jonnf}.
\par
(b)~As a matter of fact,
the family~\eqref{2.hamil} has ED if
and only if no one of its systems has a nonzero bounded solution. And the
ED of the whole family is equivalent to the ED over $\R$ of each one of its systems.
\par
(c)~The sections
\begin{equation}\label{2.defl}
 l^\pm(\w):=\{\bz\in\K^{2n}\,|\;(\w,\bz)\in L^\pm\}
\end{equation}
are real Lagrange planes. In addition,
\begin{equation}\label{2.deflpm}
\begin{split}
 l^\pm(\w)&=\{\bz\in\K^{2n}\,|\;\lim_{t\to\pm\infty}\n{U(t,\w)\,\bz}=\bcero\}\\
 &=\{\bz\in\K^{2n}\,|\;\sup_{\pm t\in[0,\infty)}\n{U(t,\w)\,\bz}<\infty\}\,;
\end{split}
\end{equation}
and \vspace{-.2cm}
\begin{equation}\label{2.noacotada}
 \lim_{t\to\pm\infty}\n{U(t,\w)\,\bz}=
 \infty \quad\text{if $\bz\notin l^\pm(\w)$}\,.
\end{equation}
\par
(d)~Assume that for all $\w\in\W$, the Lagrange plane
$l^+(\w)$ can be represented by the matrix $\lsm I_n\\M^+(\w)\rsm$.
Or, equivalently, that for all $\w\in\W$, the Lagrange plane
$l^+(\w)$ can be represented by a matrix $\lsm L_1^+(\w)\\L_2^+(\w)\rsm$
with $\det L_1^+(\w)\ne 0$ (so that $M^+(\w)=L_2(\w)\,L_1^{-1}(\w)$).
In this case
$M^+\colon\W\to\s_n(\K)$ is a continuous matrix-valued
function, and it is known as one of the {\em Weyl functions}
for \eqref{2.hamil}. In this situation, we say that
the Weyl function $M^+$~{\em globally exists}.
In addition, for all $\w\in\W$
the function $t\mapsto M^+(\wt)$ is a solution of the Riccati equation
associated to \eqref{2.hamil}, namely
\begin{equation}\label{2.riccati}
  M'=-MH_3(\wt)M-MH_1(\wt)-H_1^T(\wt)M+H_2(\wt)\,.
\end{equation}
We say that $M^+$ is a {\em globally defined solution along the flow of\/}
\eqref{2.riccati}. The other Weyl function is $M^-$, associated
to the subbundle $L^-$, and it satisfies the same properties (if it exists).
\par
(e)~Now we do not assume the presence of ED.
Let $M(t,\w,M_0)$ represent the solution of the equation
\eqref{2.riccati} corresponding to $\w$ which satisfies $M(0,\w,M_0)=M_0$.
Then the map $(t,\w,M_0)\mapsto M(t,\w,M_0)$ defines a continuous
skew-product flow
on $\W\times\s_n(\R)$, which is in general local, since the solutions
may not be globally defined.  In particular,
$M(t+s,\w,M_0)=M(t,\ws,M(s,\w,M_0))$
whenever all the elements in the right-hand term are defined.
\end{nota}
\begin{defi}\label{2.defNC}
Suppose that the family \eqref{2.hamil} has ED. Then it
{\em satisfies the non\-oscillation condition\/}
(or {\em NC\/} for short) if the Weyl function $M^+$
globally exists.
\end{defi}
The next result is a consequence of the Sacker and Sell perturbation
theorem (Theorem 6 of \cite{sase5}), adapted to the particular setting
of families of linear Hamiltonian systems. It summarizes part of the information provided by Theorems 1.92 and 1.95 of \cite{jonnf}.
In particular, it proves that ED, NC, and the global
existence of $M^-$ are robust properties, in
the sense that each one of them persists under small perturbations of the
matrix $H$ of the family~\eqref{2.hamil}. The space of continuous $\mathfrak{sp}(n,\K)$-valued functions on $\W$
is provided with the topology of the uniform convergence.
And the space $\Lk$ of the (real or complex) Lagrange planes
is endowed with the topology as submanifold of the
Grassmannian manifold of $n$-dimensional linear subspaces of $\K^{2n}$
(see Sections 1.2.2 and 1.2.3 of \cite{jonnf} for further details).
\begin{teor}\label{2.teorSSPT}
Suppose that the family \eqref{2.hamil}has
exponential dichotomy over $\W$. Then there exists $\eta>0$ such that
if $\mB_\eta\subset C(\W,\mathfrak{sp}(n,\K))$ is the open
ball centered at $0_{2n}$ of radius $\eta$, then
the family $\bz'=(H(\wt)+K(\wt))\,\bz$
has exponential dichotomy over $\W$ for all $K\in\mB_\eta$.
\par
Let us represent the corresponding Lagrange planes
for $K\in\mB_\eta$ by $l^\pm_K(\w)$, and the Weyl functions (if they exist)
by $M^\pm_K(\w)$. Then,
\begin{itemize}
\item[\rm(i)] the maps $l^\pm\colon\W\times\mB_\eta\to\Lk\,,\;
(\w,K)\mapsto l^\pm_K(\w)$ are continuous.
\item[\rm(ii)] Suppose further that the function
$M^+_{0_{2n}}$, associated to the unperturbed family \eqref{2.hamil},
globally exists. Then $\eta>0$ can be chosen in such a way that
$M^+_K$ globally exists for all $K\in\mB_\eta$. In addition,
the maps $\W\times\mB_\eta\to \s_n(\K),\;(\w,K)\mapsto M^+_K(\w)$ and
$M^+\colon\mB_\eta\to C(\W,\s_n(\K)),\;K\mapsto M^+_K$,
are well defined and continuous. And the analogous statements hold for
$M^-_{0_{2n}}$.
\end{itemize}
\end{teor}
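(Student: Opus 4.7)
The plan is to reduce the statement to the Sacker--Sell perturbation theorem (Theorem 6 of \cite{sase5}) combined with the local coordinate description of the Lagrangian manifold $\Lk$. A preliminary observation is essential: since $K(\wt)$ lies in $\mathfrak{sp}(n,\K)$ for every $K\in C(\W,\mathfrak{sp}(n,\K))$, the perturbed family $\bz'=(H(\wt)+K(\wt))\,\bz$ remains Hamiltonian, so Remark~\ref{2.notasED}(c) guarantees that whenever the dichotomy persists its sections automatically belong to $\Lk$.

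First I would apply Sacker--Sell directly to the skew-product flow $\tau_\K$ on $\W\times\K^{2n}$ to obtain a uniform $\eta>0$ such that for every $K\in\mB_\eta$ the perturbed flow has ED, with dichotomy projections $P^\pm_K(\w)$ depending jointly continuously on $(\w,K)$. Since the ranges of $P^\pm_K(\w)$ are precisely $l^\pm_K(\w)$, and the map that sends a projection of fixed rank to its range is continuous into the Grassmannian of $n$-planes of $\K^{2n}$, part (i) follows after restricting to $\Lk$, which carries the induced submanifold topology.

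For (ii), I would work in the coordinate chart of $\Lk$ associated with the global existence of $M^+$. Let
\[
 \mU := \{l\in\Lk \,|\; l\cap (\{\bcero\}\oplus\K^n) = \{\bcero\}\}\,,
\]
the open subset of Lagrange planes transverse to the vertical one, on which the map $\lsm L_1\\L_2\rsm\mapsto L_2 L_1^{-1}$ is well defined and continuous with values in $\s_n(\K)$. The hypothesis is precisely that $l^+_{0_{2n}}(\W)\subset\mU$; compactness of $\W$ and continuity of $l^+_{0_{2n}}$ make $l^+_{0_{2n}}(\W)$ a compact subset of the open set $\mU$. Combining the joint continuity from (i) with a standard tube argument on the compact set $\W\times\{0_{2n}\}$, I would shrink $\eta$ so that $l^+_K(\w)\in\mU$ for every $(\w,K)\in\W\times\mB_\eta$, which yields the global existence and joint continuity of $M^+_K$. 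The continuity of $K\mapsto M^+_K$ as a map into $C(\W,\s_n(\K))$ with the uniform norm would then follow from uniform continuity of the coordinate chart on a compact neighborhood of $l^+_{0_{2n}}(\W)$ together with the compactness of $\W$. The case of $M^-$ is entirely symmetric.

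The main obstacle I anticipate is the uniform-in-$\w$ selection of the neighborhood $\mB_\eta$ in part (ii), and the subsequent passage from pointwise continuity in $\w$ to convergence in the sup norm. Both rely only on compactness of $\W$ and of the intermediate image in $\Lk$, so no substantially new estimates are needed beyond what Sacker--Sell already supplies; the work is essentially bookkeeping to transfer their conclusions through the chart.
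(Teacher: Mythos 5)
Your proposal is correct and follows essentially the same route that the paper indicates: the paper does not give a proof of this theorem but cites the Sacker--Sell perturbation theorem together with Theorems~1.92 and~1.95 of \cite{jonnf}, which proceed precisely as you describe, namely by invoking Sacker--Sell for the persistence of the dichotomy and the joint continuity of the projections, passing to the Grassmannian/Lagrangian manifold via the range map, and then using the coordinate chart $\lsm L_1\\L_2\rsm\mapsto L_2L_1^{-1}$ on the open set of Lagrange planes transverse to $\{\bcero\}\oplus\K^n$, together with compactness of $\W$ and a tube argument, to shrink $\eta$ and obtain global existence and continuity (joint and in the sup norm) of $M^+_K$. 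One small simplification you could make: once the joint continuity of $(\w,K)\mapsto M^+_K(\w)$ on $\W\times\mB_\eta$ is established, the continuity of $K\mapsto M^+_K$ into $C(\W,\s_n(\K))$ follows at once from the standard fact that a continuous map on a product with compact first factor induces a continuous map into the space of continuous functions with the uniform topology, so the extra appeal to uniform continuity of the chart is not needed. Also, strictly speaking the ranges of the Sacker--Sell projections $P_K(\w)$ and of their complements $I_{2n}-P_K(\w)$ give $l^+_K(\w)$ and $l^-_K(\w)$ respectively, but this is only a matter of bookkeeping and does not affect the argument.
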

Now we introduce the concept of uniform weak disconjugacy.
\begin{defi}{\rm \label{2.defUWD}
Let $H$ take values in $\mathfrak{sp}(n,\R)$.
The family \eqref{2.hamil} of linear Hamiltonian systems is
{\em uniformly weakly disconjugate\/} (or {\em UWD\/} for short)
{\em on\/} $[0,\infty)$
(resp.~{\em on\/} $(-\infty,0]$)
if there exists $t_0\ge 0$ independent of $\w$ such that for every
nonzero solution $\bz(t,\w)=\lsm \bz_1(t,\w)\\\bz_2(t,\w)\rsm$ of the
systems corresponding to $\w$
with $\bz_1(0,\w)=\bcero$, there holds $\bz_1(t,\w)\ne\bcero$ for all
$t>t_0$ (resp.~$\bz_1(t,\w)\ne\bcero$ for all $t<-t_0$).
}\end{defi}
The results summarized in the next remarks can be found in Chapter 5
of \cite{jonnf}.
\begin{nota}\label{2.notasUWD}
Let us assume that $H_3\ge 0$.
\par
(a)~The uniform weak disconjugacy (also UWD for short) at $+\infty$ of the
family \eqref{2.hamil} is equivalent to the
UWD at $-\infty$: see Theorem 5.17 of \cite{jonnf}. We will
simply say that the family is UWD.
\par
(b)~If the family \eqref{2.hamil} is UWD, then there exist
{\em uniform principal solutions at $\pm\infty$},
$\lsm L_1^\pm(t,\w)\\L_2^\pm(t,\w)\rsm$.
They are real $2n\times n$ matrix-valued solutions of \eqref{2.hamil} satisfying the
following properties: for all $t\in\R$ and $\w\in\W$,
the matrices $L_1^\pm(t,\w)$ are nonsingular
and $\lsm L_1^\pm(t,\w)\\L_2^\pm(t,\w)\rsm$ represent Lagrange planes;
and for all $\w\in\W$,
\[
 \lim_{\pm t\to\infty}\left(\int_0^t
 (L_1^\pm)^{-1}(s,\w)\,H_3(\ws)\, ((L_1^\pm)^T)^{-1}(s,\w)\,ds\right)^{-1}\!=0_n\,.
\]
\par
(c)~If the matrix-valued functions
$\lsm L_1^\pm(t,\w)\\L_2^\pm(t,\w)\rsm$ are uniform principal solutions
at $\pm\infty$, then the real matrix-valued functions
$N^\pm\colon\W\to\s_n(\R)\,,\;\w\mapsto N^\pm(\w):=
L_2^\pm(0,\w)\,(L_1^\pm(0,\w))^{-1}$ are unique. They are called {\em principal
functions of\/} \eqref{2.hamil}, and they are globally defined
solutions along the flow
(see Remark~\ref{2.notasED}(d)) of the Riccati equation \eqref{2.riccati}.
\end{nota}
Many properties relating the ED of the family \eqref{2.hamil}
to its UWD will be used in the proofs of the results of Section~\ref{sec4}.
We will list them at the end of this section, pointing out where the reader
can find the corresponding proofs. Now
we formulate and prove a new lemma
concerning this relation for a particular type of family \eqref{2.hamil}.
Recall that a function $\wit M\colon\W\to\s_n(\R)$ is a
{\em globally defined solution along the flow of \eqref{2.riccati}}  if
the map $t\mapsto\wit M(\wt)$ is a globally defined solution of
the equation for all $\w\in\W$.
\begin{lema}\label{2.lemaRic}
Let $H$ take values in $\mathfrak{sp}(n,\R)$, and
let us suppose that $H_2>0$ and $H_3>0$. Then,
\begin{itemize}
\item[\rm(i)] the family of systems \eqref{2.hamil} is UWD and has ED,
the Weyl functions globally
exist and agree with the principal functions, and they satisfy
$\mp M^\pm>0$.
\item[\rm(ii)] If the function $\wit M\colon\W\to\s_n(\R)$ is continuous and
a globally defined
solution along the flow of \eqref{2.riccati} with $\wit M\ge 0$ (resp.~with $\wit M\le 0$), then
$\wit M>0$ (resp.~$\wit M<0$).
\end{itemize}
\end{lema}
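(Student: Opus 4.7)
My approach rests on a single identity coming from the Hamiltonian structure: for every solution $\bz(t)=\lsm\bz_1(t)\\ \bz_2(t)\rsm$ of a system in~\eqref{2.hamil},
\begin{equation*}
 (\bz_1^T\bz_2)'(t)=\bz_1(t)^T H_2(\wt)\bz_1(t)+\bz_2(t)^T H_3(\wt)\bz_2(t)\ge\delta\,\n{\bz(t)}^2,
\end{equation*}
where $\delta>0$ is a common positive lower bound, available by compactness of $\W$, for the continuous positive-definite functions $H_2$ and $H_3$. Since a nontrivial solution of a linear system is nowhere zero, the scalar $t\mapsto\bz_1(t)^T\bz_2(t)$ is \emph{strictly} increasing along any such solution, and everything in part~(i) is an exploitation of this monotonicity.

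\emph{Part (i).} UWD with $t_0=0$ is immediate: $\bz_1(0)=\bcero$ makes $\bz_1^T\bz_2$ vanish at $0$, and strict monotonicity forces $\bz_1(t)\ne\bcero$ for every $t\ne 0$. For ED, I would apply Remark~\ref{2.notasED}(b) and rule out nontrivial bounded solutions on $\R$: boundedness of $\bz$ makes $\bz_1^T\bz_2$ bounded and monotone, so its total increase $\int_\R\n{\bz(t)}^2\,dt$ is finite; boundedness of $H$ makes $\bz'$ bounded, so $\n{\bz}^2$ is uniformly continuous, and a uniformly continuous $L^1$ function on $\R$ tends to zero at $\pm\infty$; but then $\bz_1^T\bz_2$ has zero limits at both ends while being strictly increasing, a contradiction. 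Global existence of $M^+$ reduces, by Remark~\ref{2.notasED}(d), to the exclusion of vectors of the form $\lsm\bcero\\ \by_0\rsm$ with $\by_0\ne\bcero$ from $l^+(\w)$: the corresponding solution decays at $+\infty$ by~\eqref{2.deflpm}, so $g:=\bz_1^T\bz_2$ satisfies $g(0)=0$, $g'(0)=\by_0^T H_3(\w)\by_0>0$, and $g(+\infty)=0$, again contradicting strict monotonicity; the symmetric argument gives $M^-$. For the signs, with $\bx_0\ne\bcero$ and $\bz_0=\lsm\bx_0\\ M^+(\w)\bx_0\rsm\in l^+(\w)$, decay at $+\infty$ and strict monotonicity force $g(0)=\bx_0^T M^+(\w)\bx_0<0$, so $M^+<0$; by symmetry $M^->0$. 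The identification $M^\pm=N^\pm$ in the joint presence of UWD and of the global existence of the Weyl functions I would cite from Chapter~5 of~\cite{jonnf}.

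\emph{Part (ii).} If $\wit M\colon\W\to\s_n(\R)$ is continuous and a globally defined solution along the flow of~\eqref{2.riccati}, a direct computation shows that for $\bz(t)=U(t,\w)\lsm\bx_0\\ \wit M(\w)\bx_0\rsm$ the deviation $\phi(t):=\bz_2(t)-\wit M(\wt)\bz_1(t)$ satisfies the homogeneous linear equation $\phi'=-(H_1^T(\wt)+\wit M(\wt)H_3(\wt))\phi$ with $\phi(0)=\bcero$, hence is identically zero; in other words $\bz_2(t)=\wit M(\wt)\bz_1(t)$ for every $t$. Assume $\wit M\ge 0$ and $\wit M(\w_0)\bx_0=\bcero$ for some $\w_0\in\W$ and some $\bx_0\ne\bcero$; then with $\bz_0=\lsm\bx_0\\ \bcero\rsm$ the function $g(t):=\bz_1(t)^T\wit M(\w_0\cdot t)\bz_1(t)=\bz_1^T\bz_2$ is nonnegative throughout, vanishes at $t=0$, and has $g'(0)=\bx_0^T H_2(\w_0)\bx_0>0$; hence $g(t)<0$ for $t<0$ close to $0$, a contradiction. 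Therefore every $\wit M(\w)$ has trivial kernel, and being positive semidefinite and invertible it is positive definite. The case $\wit M\le 0$ is symmetric, with the contradiction arising from $g(\epsilon)>0$ for small $\epsilon>0$ against $g\le 0$.

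\emph{Main obstacle.} The whole argument essentially rests on one monotonicity identity, applied repeatedly. The only non-trivial external input is the identification $M^\pm=N^\pm$ in Part~(i), which draws on the UWD theory collected in Chapter~5 of~\cite{jonnf}; a minor but delicate ingredient in the ED step is the passage from $\bz$ bounded with $\n{\bz}^2\in L^1(\R)$ to $\bz(t)\to\bcero$, which I would handle by the classical uniform-continuity argument.
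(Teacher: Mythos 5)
Your proof is correct, and it is genuinely different from the paper's. For part (i) the paper simply cites Proposition 5.64(ii) of \cite{jonnf} (after checking that $H_2>0$ and $H_3>0$ imply conditions D2 and D2$^*$), whereas you give a self-contained argument built entirely on the Lyapunov-type identity $(\bz_1^T\bz_2)'=\bz_1^TH_2\bz_1+\bz_2^TH_3\bz_2\ge\delta\n{\bz}^2$; the identity computes correctly, and each of your uses of it (UWD with $t_0=0$, exclusion of nontrivial bounded solutions, exclusion of vectors $\lsm\bcero\\\by_0\rsm$ from $l^\pm(\w)$, the signs $\mp M^\pm>0$) is sound. The only outsourced step, $M^\pm=N^\pm$, is the same appeal to Chapter 5 of \cite{jonnf} that the paper would also have to make, and is in fact exactly property \hyperlink{p3}{\bf p3} as stated in the paper. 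For part (ii) the paper takes a different route: it exploits the monotonicity in the initial condition of the Riccati flow together with an explicit quantitative estimate $M(\pm t_0,\w,0_n)\gtrless\pm\ep\,t_0 I_n$, sandwiching $M(t,\w,0_n)$ between $M^\pm(\wt)$ to get global definition; you instead show directly that the graph $\{(\bx,\wit M(\w)\bx)\}$ is invariant under the linear flow (the $\phi'=-(H_1^T+\wit MH_3)\phi$ computation, which is correct), reduce to $g(t)=\bz_1^T\wit M(\w{\cdot}t)\bz_1$, and again invoke the same monotonicity identity with $g(0)=0$ and $g'(0)>0$. Your route is more elementary and unified, deriving all of (i) and (ii) from a single monotone functional and requiring no Riccati comparison theory; the paper's route is shorter in text by leaning on two cited results (Proposition 5.64(ii) and Theorem 1.54 of \cite{jonnf}) and in (ii) gives slightly more quantitative information (the explicit $\ep\,t_0$ bound). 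Both are valid; yours would even make the lemma independent of Proposition 5.64, at the small cost of an explicit uniform-continuity/$L^1$ argument in the ED step, which one can in fact bypass by taking $\liminf$ subsequences rather than invoking Barbalat.
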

\begin{proof}
(i) This assertion is proved by
Proposition 5.64(ii) of \cite{jonnf}, since the conditions
$H_2>0$ and $H_3>0$ guarantee conditions D2 and D2$^*$ required in that result:
see Remark 5.19 and the comments previous to Proposition 5.64 (also in \cite{jonnf}).
\smallskip\par
(ii) Let us denote $h(\w,M):=-MH_3(\w)M-MH_1(\w)-H_1^T(\w)M+H_2(\w)$, and represent
by $M(t,\w,M_0)$ the maximal solution of \eqref{2.riccati}
(i.e., of $M'=h(\wt,M)$) with $M(t,\w,M_0)=M_0$.
Since, by (i), $M^+(\w)<0<M^-(\w)$, then the monotonicity properties
of the Riccati equation (see Theorem 1.54 of \cite{jonnf})) ensure that
$M^+(\wt)\le M(t,\w,0_n)\le M^-(\wt)$ for $t$ in the
interval of definition of $M(t,\w,0_n)$, so that this interval is $\R$
(see e.g.~Remark 1.43 of \cite{jonnf}).
Since $H_2>0$, we can take $\ep>0$ such that
$h(\w,M(0,\w,0_n))=h(\w,0_n)=H_2(\w)>\ep I_n$. The compactness of $\W$
allows us to find $t_0>0$ such that $h(\wt,M(t,\w,0_n))\ge \ep I_n$ for
all $\w\in\W$ and $t\in[-t_0,t_0]$, which ensures that
\begin{equation}\label{2.t0}
 M(-t_0,\w,0_n)<-\ep\,t_0\,I_n \quad\text{and}\quad
 M(t_0,\w,0_n)>\ep\,t_0\,I_n \quad\text{for all $\w\in\W$}\,.
\end{equation}
Now assume that the function $\wit M$ of (ii) satisfies $\wit M\ge 0$.
The monotonicity yields
$\wit M(\w)=M(t_0,\w{\cdot}(-t_0),\wit M(\w{\cdot}(-t_0)))
\ge M(t_0,\w{\cdot}(-t_0),0_n)>0$, where
$t_0$ satisfies \eqref{2.t0}. The argument is analogous if $\wit M\le 0$.
\end{proof}
The last fundamental concept required for the proofs of the main results
is that of rotation number with respect to a
given $\sigma$-ergodic measure. Among the many equivalent definitions
for this quantity, we give one which extends that which is possibly the best known
in dimension 2. Recall that $U(t,\w)=\lsm
U_1(t,\w)&U_3(t,\w)\\ U_2(t,\w)&U_4(t,\w)\rsm$ is the matrix-valued
solution of \eqref{2.hamil} with $U(0,\w)=I_{2n}$. And $\arg\colon\C\to\R$
holds for the continuous branch of the argument of a complex number satisfying
$\arg 1=0$.
\begin{defi}\label{2.defrot}
Let $m_0$ be a $\sigma$-ergodic measure on $\W$.
The {\em rotation number of the family \eqref{2.hamil}
with respect to $m_0$} is the value of
\[
 \lim_{t\to\infty}\frac{1}{t}\:\arg\det (U_1(t,\w)-iU_2(t,\w))
\]
for $m_0$-a.a.~$\w\in\W$, which exists, is finite and common.
\end{defi}
The proof that this definition is correct can be found
in Chapter 2 of \cite{jonnf}, where the interested
reader will also find many other (equivalent) definitions
for the rotation number of different nature as well as an
exhaustive description of its properties.
\par
With the aim of simplifying the proofs of the main results of Section \ref{sec4},
we list now some properties relating exponential dichotomy, uniform weak
disconjugacy, and rotation number which we will use, and we indicate
where to find their proofs, all of them in \cite{jonnf}.
Note that the statements here lusted are not the optimal one,
but those which we will use; and that the list is far away from exhaustive.
More properties concerning the coincidence between Weyl functions and principal
functions can be found in \cite{hize}.
\par
In this list of properties we will use repeatedly the fact that $H_3>0$
is stronger than the conditions D1 and D2 of Chapter 5 of \cite{jonnf}
(see Remark 5.19 of \cite{jonnf}), which
are required in several of the results we make reference to. Also, the
matrix valued $F$ of \eqref{2.hamil} is supposed to take values in
$\mathfrak{sp}(n\R)$.
\begin{itemize}
\item[\hypertarget{p1}\text{\bf p1.}] Suppose that $H_3>0$ and $H_2\ge 0$. Then the
family \eqref{2.hamil} is uniformly weakly disconjugate. This assertion
is proved in Proposition 5.27 of \cite{jonnf}.
\item[\hypertarget{p2}\text{\bf p2.}] Suppose that $H_3>0$ and that the Weyl function
$M^+$ globally exists. Then the
family \eqref{2.hamil} is uniformly weakly disconjugate. This property
follows from Theorem 5.17 of \cite{jonnf}, since $H_3>0$,
and the global existence of $M^+$ is stronger
than the remaining required condition, D3.
\item[\hypertarget{p3}\text{\bf p3.}] Suppose that $H_3\ge 0$ and that the
family \eqref{2.hamil} is uniformly weakly disconjugate. Then
the principal functions satisfy $N^+\le N^-$: see Theorem 5.43
of \cite{jonnf}. In addition, $N^+<N^-$ if and only if the
family \eqref{2.hamil} has exponential dichotomy,
in which case the Weyl functions are globally defined and satisfy
$M^\pm=N^\pm$: see Theorem 5.58 of~\cite{jonnf}.
\item[\hypertarget{p4}\text{\bf p4.}] Suppose that
$H_3\ge 0$ and the family \eqref{2.hamil} is uniformly weakly disconjugate.
Let $\wit M\colon\W\to\s_n(\R)$ be a globally defined solution along the
flow of the Riccati equation \eqref{2.riccati}. Then
$N^+\le \wit M\le N^-$. This fact is proved in
Theorem 5.48 of \cite{jonnf}.
\item[\hypertarget{p5}\text{\bf p5.}] Suppose that
the families $\bz'=H^1(\wt)\,\bz$ and $\bz'=H^2(\wt)\,\bz$
satisfy $H^1_3>0$ and $H^2_3>0$. Suppose also that
the family $\bz'=H^2(\wt)\,\bz$ is uniformly weakly disconjugate,
and that $JH^1\le JH^2$.
Then the family $\bz'=H^1(\wt)\,\bz$ is uniformly weakly disconjugate, and
the corresponding principal functions $N_1^\pm$ and $N_2^\pm$
satisfy $N_1^+\le N_2^+\le N_2^-\le N_1^-$. This assertion is
an immediate consequence of Proposition 5.51
of \cite{jonnf}.
\item[\hypertarget{p6}\text{\bf p6.}]
Let $\G\colon\W\to\s_{2n}(\R)$ be continuous and satisfy $\G\ge 0$,
and let us consider the families
$\bz'=(H(\wt)+\alpha J^{-1}\G(\wt))\,\bz$ for $\alpha\in\R$.
Then the rotation
number increases as $\alpha$ increases. The proof of this well-known assertion
can be found in Proposition 2.33 of \cite{jonnf}.
\item[\hypertarget{p7}\text{\bf p7.}]
Suppose that $\W=\Supp m_0$ for a $\sigma$-ergodic measure $m_0$, and let $\G\colon\W\to\s_{2n}(\R)$ be continuous and satisfy $\G>0$.
Let $\mI\subseteq\R$ be an open
interval, and let us consider the families
$\bz'=(H(\wt)+\alpha J^{-1}\G(\wt))\,\bz$ for $\alpha\in\mI$.
Then these families have exponential dichotomy over $\W$
for all $\alpha\in \mI$ if and only if the rotation number
with respect to $m_0$ is constant on $\mI$. This assertion
(as a matter of fact, a more general one) is one of the main
results of \cite{jone}, and a very detailed proof is given in Theorem 3.50 of \cite{jonnf}.
\item[\hypertarget{p8}\text{\bf p8.}]
The \lq\lq only if" part of the previous property can be extended
to more general situations. Let $m_0$ be a $\sigma$-ergodic measure on $\W$.
Suppose that the family \eqref{2.hamil} has exponential dichotomy, so that
Theorem \ref{2.teorSSPT}  provides a neighborhood of $H$
in $C(\W,\mathfrak{sp}(n,\R))$ such that the corresponding families
of linear Hamiltonian
systems have exponential dichotomy. Then the rotation number with respect to
$m_0$ is common
for all these families. This assertion follows, for instance, from
Theorems 2.28 and 2.25 of \cite{jonnf}.
\item[\hypertarget{p9}\text{\bf p9.}]
Suppose that $\W=\Supp m_0$ for a $\sigma$-ergodic measure $m_0$, and that
$H_3>0$. Then the family \eqref{2.hamil} is uniformly weakly disconjugate
if and only if its rotation number with respect to $m_0$ is 0. This is proved
in Theorem 5.67 of \cite{jonnf}.
\item[\hypertarget{p10}\text{\bf p10.}]
Suppose that the family \eqref{2.hamil} satisfies the nonoscillation condition, and
that $H_3\ge 0$. Then its rotation number with respect to any ergodic measure is 0.
This assertion can be proved using Proposition 5.8 of \cite{jonnf} to check that
all the systems of the family are nonoscillatory at $\infty$, and then applying
Proposition 5.65 of \cite{jonnf}.
\end{itemize}
%%%%%%%%%%%%%%%%%%%%%%%%%%%%%%%%%%%%%%%%%%%%%%%%%%%%%%%%%%%%%%%%%%%%%%%%%%%%%%%%%%%%%
%%%%%%%%%%%%%%%%%%%%%%%%%%%%%%%%%%%%%%%%%%%%%%%%%%%%%%%%%%%%%%%%%%%%%%%%%%%%%%%%%%%%%
%%%%%%%%%%%%%%%%%%%%%%%%%%%%%%%%%%%%%%%%%%%%%%%%%%%%%%%%%%%%%%%%%%%%%%%%%%%%%%%%%%%%%
%%%%%%%%%%%%%%%%%%%%%%%%%%%%%%%%%%%%%%%%%%%%%%%%%%%%%%%%%%%%%%%%%%%%%%%%%%%%%%%%%%%%%
\section{Global existence of Weyl functions}\label{sec3}
Let $(\W,\sigma)$ be a real continuous global flow on a
compact metric space, and let us denote $\wt=\sigma(t,\w)$.
Let us consider a continuous matrix-valued function
$H\colon\W\to\mathfrak{sp}(n,\R)$, with $H=\lsm H_1&H_3\\H_2&-H_1^T\rsm$,
which provides the family of linear Hamiltonian systems
systems
\begin{equation}\label{3.hamil}
\bz'=H(\wt)\,\bz
\end{equation}
for $\w\in\W$.
Given a continuous matrix-valued function $\Delta\colon\W\to\s_n(\R)$,
we consider the perturbed families of Hamiltonian systems
\begin{equation}\label{3.hamilper}
\bz'=H^\lambda(\wt)\,\bz\,,\quad \text{where }\;
 H^\lambda(\w):=\left[\begin{array}{cc} H_1(\w)&
 H_3(\w)+\lambda\Delta(\w)\\
 H_2(\w)&-H_1^T(\w)\end{array}\right]
\end{equation}
for $\w\in\W$. The parameter $\lambda$ varies in $\C$, and
we will use the notation \eqref{3.hamilper}$^\lambda$
to make reference to a particular value of $\lambda$. Obviously,
\eqref{3.hamilper}$^0$ agrees with \eqref{3.hamil}.
\par
We will analyze in this section two different scenarios with $\Delta>0$
under which, if $\lambda\in\C-\R$,
the families \eqref{3.hamilper}$^\lambda$ have ED and there exist
both Weyl functions, which we will denote by $M^\pm(\w,\lambda)$.
These results will be used in the proof of the main results in
Section~\ref{sec4}, but have independent
interest. In particular, Theorem~\ref{3.teornoatki} analyzes
this question in the absence of the so-called {\em Atkinson condition}
(see \eqref{3.atkgen}), which is usually required to guarantee the
mentioned properties. It is also important to
emphasize that, in the two cases, the Weyl functions will be
Herglotz functions on the complex upper and lower half-planes
for each fixed $\w\in\W$. As usual, we represent
$\C^\pm:=\{\lambda\in\C\,|\;\pm\Ima\lambda>0\}$.
\begin{defi}{\rm \label{3.defherg}
A symmetric matrix-valued function $M$ defined on $\C^+$ or
$\C^-$ is {\em Herglotz\/}
if it is holomorphic
and $\Ima M(\lambda)$ is either positive semidefinite
or negative semidefinite on the whole half-plane.
}\end{defi}
Let us represent by $\bz(t,\w,\bz_0)=\lsm\bz_1(t,\w,\bz_0)\\
\bz_2(t,\w,\bz_0)\rsm$ the solution of the system~\eqref{3.hamil}
corresponding to $\w$ which satisfies $\bz(0,\w,\bz_0)=\bz_0$.
The first result (which as a matter of fact is not new: see its proof)
is formulated under the next Atkinson-type condition on $\Delta$.
\begin{hipo}\label{3.hipoatki}
$\Delta\ge 0$, and each minimal
subset of $\W$ contains at least one point $\w_0$ such that
\begin{equation}\label{3.atk}
 \int_{-\infty}^{\infty} \n{\Delta(\w_0{\cdot}t)\,\bz_2(t,\w_0,\bz_0)}^2\, dt > 0
 \quad \mbox{whenever $\bz_0\in\C^{\,2n}\!-\!\{\bcero\}$}\,.
\end{equation}
\end{hipo}
\begin{teor}\label{3.teoratki}
Suppose that Hypothesis~\ref{3.hipoatki} holds.
\begin{itemize}
\item[\rm(i)] If $\Ima\lambda\ne 0$, then
the family~\eqref{3.hamilper}$^\lambda$ has exponential dichotomy.
%\end{itemize}
%Let $L^\pm_\lambda$ be the invariant subbundles appearing in Definition
%\ref{2.defED}, and let $l^{\pm}(\w,\lambda)$ be the corresponding sections,
%given by \eqref{2.defl}.
%\begin{itemize}
\item[\rm(ii)] If $\Ima\lambda\ne 0$, then there globally exist the Weyl functions
$M^\pm(\w,\lambda)$. In addition, the maps
$M^\pm\colon\W\times(\C-\R)\to\s_n(\C),\;(\w,\lambda)\mapsto M^\pm(\w,\lambda)$
are jointly continuous, satisfy
$\pm\Ima\lambda\,\Ima M^{\pm}(\w,\lambda)>0$, and
are holomorphic on $\C-\R$ for each $\w\in\W$ fixed. In particular,
they are Herglotz functions on $\C^+$ and $\C^-$.
\end{itemize}
\end{teor}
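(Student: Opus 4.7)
The plan is to follow the classical Atkinson--Titchmarsh--Weyl $M$-function approach adapted to the nonautonomous skew-product framework (as in Johnson~\cite{john1} and Chapter~3 of~\cite{jonnf}). The foundational tool is the \emph{Lagrange--Green identity}: since $(H^\lambda)^*J+JH^\lambda=2i\,\Ima\lambda\,\G$ with $\G=\lsm 0_n&0_n\\0_n&\Delta\rsm$, any solution $\bz(t)=\lsm\bz_1(t)\\\bz_2(t)\rsm$ of \eqref{3.hamilper}$^\lambda$ at $\w$ satisfies
\[
 \frac{d}{dt}\bigl(\bz(t)^*J\,\bz(t)\bigr)=2i\,\Ima\lambda\cdot\bz_2(t)^*\Delta(\wt)\,\bz_2(t).
\]
Since $\bz^*J\bz$ is purely imaginary and $\Delta\ge 0$, a suitable normalization produces a real monotone function of $t$ whose sign of variation is controlled by $\Ima\lambda$; this is the mechanism by which bounded solutions are constrained.

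I would then construct $M^+(\w,\lambda)$ via nested Weyl disks. For $\w\in\W$, $\lambda\in\C^+$, and $b>0$, consider the set $D_b(\w,\lambda)\subset\s_n(\C)$ of symmetric matrices $M$ for which the columns of the solution of \eqref{3.hamilper}$^\lambda$ starting at $\lsm I_n\\M\rsm$ still lie in the positive cone $\{\bz\mid -i\bz^*J\bz\ge 0\}$ at time~$b$. A standard M\"obius-type calculation with the symplectic fundamental solution shows that each $D_b$ is a closed topological ball contained in the upper half-space $\{M\mid\Ima M>0\}$ and that the family is nested in~$b$. The Atkinson condition~\eqref{3.atk} forces $\bigcap_{b>0}D_b(\w_0,\lambda)$ to reduce to a single point: otherwise, a limiting argument produces a nonzero solution satisfying $\Delta(\w_0{\cdot}t)\,\bz_2(t)\equiv\bcero$ along the forward orbit, which therefore also solves the unperturbed system at $\w_0$ and contradicts~\eqref{3.atk}. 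Setting $M^+(\w_0,\lambda)$ equal to that unique point, I would extend $M^+$ to every $\w\in\W$ by transporting along the Riccati flow~\eqref{2.riccati} from a representative in the alpha/omega limit set of $\w$, using flow invariance and density of orbits in the minimal subsets. An analogous construction on $(-\infty,0]$ yields $M^-$ with imaginary part of the opposite sign.

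With both Weyl functions in hand, exponential dichotomy follows cheaply. The Lagrange planes $l^\pm(\w)$ represented by $\lsm I_n\\M^\pm(\w,\lambda)\rsm$ are continuous and flow-invariant; their intersection is trivial because any common $\bz_0=\lsm\bv\\M^+\bv\rsm=\lsm\bv\\M^-\bv\rsm$ gives $(M^+-M^-)\bv=\bcero$, and the sign-definiteness of $\Ima(M^+-M^-)$ forces $M^+-M^-$ to be invertible. By Remark~\ref{2.notasED}(b), the absence of a nonzero bounded solution on $\R$ in every system of the family is equivalent to ED, and the subbundles $L^\pm$ then agree with the graphs of $M^\pm$. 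Joint continuity of $(\w,\lambda)\mapsto M^\pm(\w,\lambda)$ on $\W\times(\C-\R)$ comes from compactness of $\W$, continuous dependence of solutions on data and parameter, and uniform shrinking of the Weyl disks; holomorphy in $\lambda$ for each fixed $\w$ is inherited because each truncated $M^+_b$ is a rational function of $\lambda$ built from entries of a fundamental matrix entire in~$\lambda$, and uniform limits of holomorphic functions are holomorphic. The sign conditions $\pm\Ima\lambda\cdot\Ima M^\pm>0$ come directly from the strict positivity/negativity of $\Ima M^\pm$ enforced by the disk construction.

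The main obstacle, I expect, is the rigorous verification that the Weyl disks $D_b(\w,\lambda)$ actually collapse to a single point for every $\w\in\W$: the Atkinson hypothesis is assumed only at one representative per minimal subset, so propagating it to the whole base requires careful exploitation of flow invariance, the decomposition of $\W$ into minimal pieces and their basins, and continuous dependence on~$\w$. Once this limit-point property is secured uniformly on $\W$, all remaining claims (Herglotz character, ED, joint continuity, analyticity in~$\lambda$) follow by routine arguments.
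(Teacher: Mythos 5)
The paper's proof of this theorem is simply a reduction to the general result for perturbations $H+\lambda\J\G$ with $\G\ge 0$ satisfying the Atkinson condition~\eqref{3.atkgen}, citing \cite{john1} and Theorems 3.8--3.9 of \cite{jonnf}, after observing that Hypothesis~\ref{3.hipoatki} is precisely that condition for $\G=\lsm 0_n&0_n\\0_n&\Delta\rsm$. Your Weyl-disk / Titchmarsh--Weyl $M$-function programme is indeed the substance behind that citation, and your Lagrange--Green identity is exactly the algebraic backbone of the argument (it is identity \eqref{3.ED2}, used by the paper in its own proof of Theorem~\ref{3.teornoatki}). So you are reconstructing the cited proof rather than taking a genuinely different route.

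There is, however, a real gap in the logical order of your sketch. You propose to build $M^+(\w,\lambda)$ first (via nested disks plus Riccati-flow propagation) and then claim that \emph{``exponential dichotomy follows cheaply''} from the transversality of the two resulting Lagrange planes, invoking Remark~\ref{2.notasED}(b). That step does not close: two continuous, flow-invariant, pointwise-transversal Lagrangian subbundles of $\W\times\C^{2n}$ do \emph{not} by themselves preclude bounded solutions or give the exponential estimates in Definition~\ref{2.defED}. The exponential decay on the graphs of $M^\pm$ is exactly what one has to prove, and it requires a quantitative use of the monotone quantity $-i\,\bz^*J\bz$ together with the definiteness supplied by the Atkinson condition --- the argument carried out in the paper's proof of Theorem~\ref{3.teornoatki}(i) (under $\Delta>0$) and in Theorem 3.8 of \cite{jonnf} (under \eqref{3.atkgen}). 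The robust structure is the reverse of yours: establish ED first, by showing no system has a nonzero bounded solution (integrate \eqref{3.ED2} over $\R$, use compactness of $\W$ and the Atkinson condition to force $\bz_2\equiv\bcero$, then \eqref{3.atk} to force $\bz\equiv\bcero$); this hands you the sections $l^\pm(\w)$ for \emph{every} $\w$ from \eqref{2.defl} with no propagation needed, and then the same identity on a half-line, together with $\eqref{3.atk}$, shows $l^+(\w)$ contains no nonzero vector of the form $\lsm\bcero\\\bz_2\rsm$, whence $M^+(\w,\lambda)$ exists. Your proposed propagation ``from a representative in the alpha/omega limit set of $\w$, using flow invariance and density'' is both underspecified (the limit set does not contain $\w$, and the Riccati flow does not transport from the limit set back to $\w$) and unnecessary once the ED-first order is adopted; you correctly flag this as the main obstacle, but the strict sign $\pm\Ima\lambda\,\Ima M^\pm>0$ and the deduction of exponential decay are the points that your sketch most seriously underestimates.
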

\begin{proof}
In the general case of a perturbed Hamiltonian system of the form
\begin{equation}\label{3.hamilpergamma}
 \bz'%=H^\lambda(\wt)\,\bz
 =(H(\wt)+\lambda\J\,\G(\wt))\,\bz
\end{equation}
for a continuous
perturbation matrix-valued function $\Gamma\colon\W\to\s_{2n}(\R)$,
all the conclusions of Theorem~\ref{3.teoratki} hold under the
following general Atkinson condition: $\Gamma\ge 0$, and each
minimal subset of $\W$ contains at least one point $\w_0$ such that
\begin{equation}\label{3.atkgen}
 \int_{-\infty}^{\infty} \n{\G(\w_0{\cdot}t)\,\bz(t,\w_0,\bz_0)}^2\, dt > 0
 \quad \mbox{whenever $\bz_0\in\C^{\,2n}\!-\!\{\bcero\}$}\,.
\end{equation}
This assertion is originally proved in \cite{john1},
and a very detailed proof can be found in Theorems 3.8 and 3.9 of \cite{jonnf}.
It is also clear that in the case of~\eqref{3.hamilper},
$H^\lambda=H+\lambda\J\,\G$ for $\G:=\lsm 0_n&0_n\\0_n&\Delta\rsm$, and hence
that Hypothesis~\ref{3.hipoatki} is the general one applied to the particular case.
\end{proof}
\begin{nota}\label{3.notaatki}
(a)~It is very easy to check that a function
$\bz(t,\w)=\lsm\bz_1(t,\w)\\\bcero\rsm$ solves the system
\eqref{3.hamilper}$^{\lambda_0}$ corresponding to $\w$
for a $\lambda_0\in\C$  if and only if it solves
the system \eqref{3.hamilper}$^\lambda$ corresponding to the same
$\w$ for all $\lambda\in\C$: both conditions are equivalent
to saying that $\bz_1'(t,\w)=H_1(\wt)\,\bz_1(t,\w)$
and $\bcero=H_2(\wt)\,\bz_1(t,\w)$, so that $\lambda$ plays no role.
\par
(b)~Let us assume that $\Delta>0$. Lemma 3.6(iv) of \cite{jonnf}
ensures that $\Delta$ satisfies
Hypothesis~\ref{3.hipoatki} (or, equivalently,
$\G:=\lsm 0_n&0_n\\0_n&\Delta\rsm$ satisfies \eqref{3.atkgen}) if
and only if \eqref{3.atk} holds {\em for all} $\w\in\W$. This ensures
that $\Delta>0$ does not
satisfy Hypothesis~\ref{3.hipoatki} if and only if there exist
$\w\in\W$ and $\bz_0\in\C^{2n}-\{\bcero\}$ such that
$\bz(t,\w,\bz_0)=\lsm\bz_1(t,\w,\bz_0)\\\bcero\rsm$ for all $t\in\R$.
(This is for instance the case when
$\Delta=I_n$ and $H=\lsm I_n&0_n\\0_n&I_n\rsm$.)
According to the previous remark, $\Delta>0$
does not satisfy Hypothesis~\ref{3.hipoatki}
if and only if there exists a point $\w\in\W$ and a $\lambda_0\in\C$
such that the system~\eqref{3.hamilper}$^{\lambda_0}$
admits a nontrivial solution $\bz(t,\w)=\lsm\bz_1(t,\w)\\\bcero\rsm$, in
which case this function solves the system~\eqref{3.hamilper}$^\lambda$
for the same $\w$ and all $\lambda\in\C$.
\par
(c)~As a matter of fact, $\Delta>0$ does not
satisfy Hypothesis~\ref{3.hipoatki} if and only if there exist
a minimal subset $\mM\subseteq\W$ such that all the systems
\eqref{3.hamilper}$^{\lambda}$ corresponding to $\w\in\mM$
admit a nontrivial solution
$\bz(t,\w)=\lsm\bz_1(t,\w)\\\bcero\rsm$ (common for all $\lambda\in\C$).
\par
(d)~Let $U_{H_1}(t,\w)$ represent the matrix-valued solution of
$\bz_1'=H_1(\wt)\,\bz_1$ with $U_{H_1}(0,\w)=I_n$.
Note that $\bz(t)=\lsm\bz_1(t)\\\bcero\rsm\not\equiv\bcero$
is a solution of the system~\eqref{3.hamil} corresponding to a point $\w\in\W$
if and only $\bz_1(0)=\bz_1^0\ne\bcero$ with
$H_2(\wt)\,U_{H_1}(t,\w)\,\bz_1(0)=\bcero$
for any $t\in\R$, in which case $\bz_1(t)=U_{H_1}(t,\w)\,\bz_0^1$.
Since $\bz_1(t)\ne\bcero$ for all $t\in\R$, the
existence of such a solution ensures that
$\det H_2(\wt)=0$ for all $t\in\R$. By continuity, there must exist a minimal set
(contained in the omega limit of $\w$ for the base flow)
on which $\det H_2$ vanishes identically.
\par
(e)~Note finally that a continuous map
$\G\colon\W\to\s_{2n}(\R)$ with $\G>0$ satisfies \eqref{3.atkgen}
for all $\w_0\in\W$,
and hence the results of \cite{john1} (see Theorems 3.8 and 3.9 of \cite{jonnf})
ensure that all the conclusions of Theorem~\ref{3.teoratki}
apply to the family \eqref{3.hamilpergamma}.
We will use this property later.
(As a matter of fact, it is enough that each minimal
subset of $\W$ contains a point $\w$ with
$\G(\w)>0$.) This is an important difference with the
case of $\Delta$: the second condition of Hypothesis \ref{3.hipoatki}
is not guaranteed by $\Delta>0$.
\end{nota}
The previous Remarks \ref{3.notaatki}(b),(c)\&(d) describe possible situations
in which a continuous matrix-valued function $\Delta\colon\W\to\s_n(\R)$
with $\Delta>0$ may not satisfy Hypothesis \ref{3.hipoatki}. The next result
will also prove the occurrence of ED and the global existence of Weyl functions for
$\lambda$ outside the real line, under a different condition.
The point $\lambda_0\in\C$ appearing in its hypothesis can of course be real.
\begin{hipo}\label{3.hiposED}
$\Delta>0$, and there exists $\lambda_0\in\C$ such that the
family~\eqref{3.hamilper}$^{\lambda_0}$
has exponential dichotomy.
\end{hipo}
\begin{teor}\label{3.teornoatki}
Suppose that Hypothesis~\ref{3.hiposED} holds.
\begin{itemize}
\item[\rm(i)] If $\Ima\lambda\ne 0$, then
the family~\eqref{3.hamilper}$^\lambda$ has exponential dichotomy.
%\end{itemize}
%Let $L^\pm_\lambda$ be the invariant subbundles appearing in Definition
%\ref{2.defED}, and let $l^{\pm}(\w,\lambda)$ be the corresponding sections,
%given by \eqref{2.defl}.
%\begin{itemize}
\item[\rm(ii)] If $\Ima\lambda\ne 0$, there globally exist the Weyl functions
$M^\pm(\w,\lambda)$. In addition, the maps
$M^\pm\colon\W\times(\C-\R)\to\s_n(\C),\;(\w,\lambda)\mapsto M^\pm(\w,\lambda)$
are jointly continuous, satisfy
$\pm\Ima\lambda\,\Ima M^{\pm}(\w,\lambda)\ge 0$, and
are holomorphic on $\C-\R$ for each $\w\in\W$ fixed. In particular,
they are Herglotz functions on $\C^+$ and $\C^-$.
\item[\rm(iii)] If Hypothesis~\ref{3.hipoatki} does not hold,
there exists a minimal subset $\mM\subseteq\W$ such that
either $\det M^+(\w,\lambda)=0$ for all $\w\in\mM$ and all $\lambda\in\C-\R$
or $\det M^-(\w,\lambda)=0$ for all $\w\in\mM$
and all $\lambda\in\C-\R$.
\end{itemize}
\end{teor}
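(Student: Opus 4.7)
My plan is to use a single energy-type identity for both (i) and (ii), and to derive (iii) from Remark \ref{3.notaatki}(c) by applying a Sacker--Sell spectral decomposition to a $\lambda$-independent invariant subbundle. Writing $H^\lambda=H+\lambda\J\G$ with $\G=\lsm 0_n&0_n\\0_n&\Delta\rsm\ge 0$, one checks $(H^\lambda)^*J+JH^\lambda=2i\,\Ima\lambda\,\G$, so every solution $\bz(t)$ of $\eqref{3.hamilper}^\lambda$ satisfies
\[
 \frac{d}{dt}\bigl(-i\,\bz^*(t)J\bz(t)\bigr)=2\,\Ima\lambda\;\bz_2^*(t)\,\Delta(\wt)\,\bz_2(t).
\]
Set $f(t):=-i\,\bz^*(t)J\bz(t)$. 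For (i), suppose $\Ima\lambda>0$ (the case $\Ima\lambda<0$ is symmetric) and $\bz$ is a nonzero bounded solution on $\R$; then $f$ is bounded and monotone nondecreasing, so $\int_\R\bz_2^*\Delta\bz_2\,dt<\infty$, and $\Delta>0$ on the compact $\W$ gives $\bz_2\in L^2(\R,\C^n)$. Reading the equation as $\bz'=H^{\lambda_0}\bz+(\lambda-\lambda_0)\lsm\Delta\bz_2\\\bcero\rsm$ exhibits $\bz$ as a bounded solution of an inhomogeneous system whose forcing lies in $L^2\cap L^\infty$; Hypothesis \ref{3.hiposED}, together with the bounded-to-bounded and $L^2$-to-$L^2$ estimates for the dichotomy Green's function of $\eqref{3.hamilper}^{\lambda_0}$, forces $\bz$ to coincide with the unique bounded solution and to lie in $L^2$. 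Since $\bz'$ is bounded, $\bz$ is uniformly continuous, so $\bz(t)\to\bcero$ as $|t|\to\infty$, whence $f(\pm\infty)=0$, $f\equiv 0$ by monotonicity, and $\bz_2\equiv\bcero$. Then $\bz=\lsm\bz_1\\\bcero\rsm$ with $H_2(\wt)\bz_1(t)\equiv\bcero$, so by Remark \ref{3.notaatki}(a) it is also a nonzero bounded solution of $\eqref{3.hamilper}^{\lambda_0}$, contradicting its ED.

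For (ii), existence of $M^+(\w,\lambda)$ amounts to transversality of $l^+(\w,\lambda)$ to the vertical Lagrange plane: if $\lsm\bcero\\\by_0\rsm\in l^+(\w,\lambda)$ with $\by_0\ne\bcero$, the associated $f$ satisfies $f(0)=0$ and $f(\infty)=0$ (using decay along $l^+$), so monotonicity forces $f\equiv 0$ on $[0,\infty)$, hence $\bz_2\equiv\bcero$ there and the contradiction $\by_0=\bz_2(0)=\bcero$. Joint continuity of $M^\pm$ comes from Theorem \ref{2.teorSSPT}; holomorphy in $\lambda$ follows from the classical holomorphic dependence of the Riesz spectral projections (and hence of $l^\pm$) on the entire parameter. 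The Herglotz sign condition is obtained by applying the identity to the solution with $\bz(0)=\lsm\bv\\M^+(\w,\lambda)\bv\rsm$, which yields
\[
 \bv^*\,\Ima M^+(\w,\lambda)\,\bv=\Ima\lambda\int_0^\infty\bz_2^*(t)\,\Delta(\wt)\,\bz_2(t)\,dt;
\]
$M^-$ is treated symmetrically on $(-\infty,0]$.

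For (iii), Remark \ref{3.notaatki}(c) provides a minimal set $\mM\subseteq\W$ on which the subspaces $V(\w):=\{\bv\in\C^n\mid H_2(\wt)\,U_{H_1}(t,\w)\,\bv=\bcero\text{ for all }t\in\R\}$ are nontrivial. The set $L_0:=\{(\w,\lsm\bv\\\bcero\rsm)\mid\w\in\mM,\,\bv\in V(\w)\}$ is a nontrivial closed subbundle of $\mM\times\C^{2n}$, invariant under $\tau_\C^\lambda$ for every $\lambda\in\C$, on which $\tau_\C^\lambda$ coincides with the $\lambda$-independent linear cocycle generated by $H_1$. Fixing any $\lambda\in\C-\R$, part (i) gives ED with some gap $\beta>0$, so the Sacker--Sell spectral theorem applied to the restricted cocycle produces a continuous invariant splitting $L_0=L_0^+\oplus L_0^-$, where $L_0^+(\w)$ (resp.~$L_0^-(\w)$) is the subspace of $V(\w)$ whose $H_1$-orbit decays exponentially as $t\to+\infty$ (resp.~$t\to-\infty$). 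These subbundles are characterized by $\lambda$-free conditions and so do not depend on $\lambda$, and $L_0\ne\{\bcero\}$ forces at least one of them to be nontrivial on all of $\mM$. If $L_0^+\ne\{\bcero\}$, every nonzero $\bv\in L_0^+(\w)$ gives $\lsm\bv\\\bcero\rsm\in l^+(\w,\lambda)$, so $M^+(\w,\lambda)\bv=\bcero$ and $\det M^+(\w,\lambda)=0$ for every $\w\in\mM$ and $\lambda\in\C-\R$; otherwise $L_0^-\ne\{\bcero\}$ and the analogous statement holds for $M^-$.

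The step I expect to be the main obstacle is the $L^2$-upgrade of the bounded solution $\bz$ in (i): one must apply the inhomogeneous dichotomy theory to $\eqref{3.hamilper}^{\lambda_0}$ at a possibly complex $\lambda_0$, combining the uniqueness of the bounded solution with the $L^2$-boundedness of the convolution with the dichotomy Green's function in order to identify $\bz$ with the $L^2$ solution. Every other step reduces to the basic identity for $f$, to Remark \ref{3.notaatki}, to Theorem \ref{2.teorSSPT}, and to the Sacker--Sell spectral theorem.
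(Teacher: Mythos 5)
Your proof is correct, and in several places it takes a genuinely different route from the paper's. For (i), the paper also begins with the Lagrange-type identity and the finiteness of $\int_\R\bz_2^*\Delta\bz_2\,dt$, but then proceeds by a compactness argument: it extracts sequences $(t_j)\uparrow\infty$, $(s_j)\downarrow-\infty$ along which the averaged energy of $\bz_2$ vanishes, passes to limit points $(\wit\w,\wit\bz_0)$ in the compact phase space, and uses continuity of the flow to conclude $\bz_2(t_j,\w,\bz_0)\to\bcero$ and $\bz_2(s_j,\w,\bz_0)\to\bcero$, from which the identity on $[s_j,t_j]$ gives $\bz_2\equiv\bcero$. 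Your route --- viewing $\bz$ as the bounded solution of an inhomogeneous ED system at $\lambda_0$ with $L^2\cap L^\infty$ forcing and upgrading to $\bz\in L^2$ via the Green's-function convolution estimates --- is also valid, though it invokes more machinery (the $L^p$-boundedness of the dichotomy Green's operator) than the paper's self-contained argument; be aware that you should make explicit that $\Delta>0$ on the compact $\W$ gives a uniform lower bound $\Delta\ge\delta I_n$ so that $\bz_2\in L^2$, and that you are applying the Green's-function estimates to the fixed $\lambda_0$ of Hypothesis \ref{3.hiposED}. For the Herglotz sign in (ii), your direct computation $\bv^*\Ima M^+(\w,\lambda)\,\bv=\Ima\lambda\int_0^\infty\bz_2^*\Delta\bz_2\,dt$ is cleaner than the paper's route, which obtains the sign by approximating $\G=\lsm 0_n&0_n\\0_n&\Delta\rsm$ with strictly positive perturbations $\G_k=\lsm (1/k)I_n&0_n\\0_n&\Delta\rsm$ (which satisfy the Atkinson condition, hence Theorem \ref{3.teoratki} applies with strict sign) and then passing to the limit $k\to\infty$ using Theorem \ref{2.teorSSPT}; the two arguments give the same (non-strict) conclusion. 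For (iii), the paper fixes one $\wit\w\in\mM$ and one $\wit\lambda$, then argues by a direct case analysis on the abnormal solution: if $\bz(0,\wit\w)\in l^+(\wit\w,\wit\lambda)$ one propagates the degeneracy by minimality; otherwise one normalizes along an escaping sequence $(t_m)$ and uses the closedness of $L^-$ to produce a limiting abnormal direction in $l^-(\w,\wit\lambda)$. Your proposal instead builds the $\lambda$-independent invariant subbundle $L_0$ over $\mM$ and splits it via the ED of the restricted ($H_1$-generated) cocycle. This is sound, but it relies on two points you should spell out: first, that $\dim V(\w)$ is constant on $\mM$ (upper semicontinuity plus invariance on a minimal set), so that $L_0$ is indeed a continuous subbundle; and second, that the restricted cocycle has ED over $\mM$ --- this follows because the full cocycle at any fixed $\lambda\in\C-\R$ has, by (i), no nonzero bounded solution, hence neither does the restriction, and by Remark \ref{2.notasED}(b) this is equivalent to ED. Once those are in place, the splitting $L_0=L_0^+\oplus L_0^-$ and its $\lambda$-independence give the conclusion exactly as you say. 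Overall, your approach buys a clean conceptual picture (the abnormal directions form a $\lambda$-free invariant subbundle that is split by the dichotomy), at the price of invoking the Sacker--Sell spectral machinery and Green's-function $L^2$-theory; the paper's proofs are more elementary and self-contained.
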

\begin{proof}
The arguments that we will use adapt those of the proof
of the result corresponding to the
Atkinson condition \eqref{3.atkgen}
(see again Theorem 3.8 of \cite{jonnf}).
%For simplicity, it is made
%assuming that the family~\eqref{3.hamil}
%(which agrees with \eqref{3.hamilper}$^0$) has ED. In the
%more general situation ensured by  Hypothesis~\ref{3.hiposED},
%it is enough to change the initial matrix $H$ by the matrix $H^{\lambda_0}$,
%which also takes values in $\mathfrak{sp}(n,\R)$.
\par
(i) We fix $\w\in\W$ and $\lambda_0\in\C-\R$, and represent
$\n{\bz}_{\Delta_t}=(\bz^*\Delta(\wt)\,\bz)^{1/2}$.
The main step of the proof shows that the
system of the family~\eqref{3.hamilper}$^{\lambda_0}$ corresponding
to our choice of $\w$ does not admit a
nonzero bounded solution. We define the functional $\mL_\w^{\lambda_0}$ as
\[
 (\mL_\w^{\lambda_0}\bz)(t)=J\bz'(t)-JH^{\lambda_0}(\wt)\,\bz(t)\,,
\]
and observe that, for any solution $\bz=\lsm\bz_1\\\bz_2\rsm$ of the
system~\eqref{3.hamilper}$^{\lambda_0}$ corresponding to $\w$, we have
$\mL_\w^{\lambda_0}\bz\equiv\bcero$ and hence
\begin{equation}\label{3.ED2}
\begin{split}
 0&=\int_a^b \left(\bz^*(t)\,(\mL_\w^{\lambda_0}\bz)(t)-
 (\mL_\w^{\lambda_0}\bz)^*(t)\,\bz(t)\right)\,dt\\
 &=\left.\bz^*(t)J\bz(t)\right|_{t=a}^{t=b}-
 2\,i\Ima\lambda_0\int_a^b\n{\bz_2(t)}_{\Delta_t}^2\,dt
\end{split}
\end{equation}
whenever $a<b$. Let us assume for contradiction that there exists
a bounded solution $\bz(t,\w,\bz_0)=
\lsm\bz_1(t,\w,\bz_0)\\\bz_2(t,\w,\bz_0)\rsm$ of~\eqref{3.hamilper}$^{\lambda_0}$.
Then~\eqref{3.ED2} ensures that
\[
%begin{equation}\label{ED1}
 \int_\R \n{\bz_2(t,\w,\bz_0)}_{\Delta_t}^2dt<\infty\,,
\]
%end{equation}
which provides an increasing sequence $(t_m)\uparrow\infty$ such that
\[
 \int_{t_m}^{t_{m}+1}\n{\bz_2(t,\w,\bz_0)}_{\Delta_t}^2 dt<\frac{1}{m}
\]
for every $m\in\N$. The compactness of $\W$ and the
boundedness of $(\wit\bz(t_m))$ provide
a subsequence $(t_j)$ and points $\wit\w\in\W$ and $\wit\bz_0\in\C^{\,2n}$
such that $\wit\w=\lim_{j\to\infty}\wt_j$ and
$\wit\bz_0=\lim_{j\to\infty}\bz(t_j,\w,\bz_0)$.
Consequently,
\[
 \bz(t,\wit\w,\wit\bz_0)=\lim_{j\to\infty}\bz(t,\wt_j,
 \bz(t_j,\w,\bz_0))\,.
\]
Hence, since
\[
 \frac{1}{j}>\int_{t_j}^{{t_j}+1}\n{\bz_2(t,\w,\bz_0)}_{\Delta_t}^2dt
 =\int_0^1\n{\bz_2(t,\wt_j,\bz(t_j,\w,\bz_0))}_{\Delta_t}^2dt\,,
\]
we find that
\[
 \int_0^1\n{\bz_2(t,\wit\w,\wit\bz_0)}_{\Delta_t}^2dt=0\,,
\]
which, since $\Delta>0$, ensures that $\bz_2(0,\wit\w,\wit\bz_0)=\bcero$.
In other words,
\[
 \lim_{j\to\infty}\bz_2(t_j,\w,\bz_0)=\bcero\,.
\]
A symmetric argument provides a sequence $(s_j)\downarrow-\infty$ such that
\[
 \lim_{j\to\infty}\bz_2(s_j,\w,\bz_0)=\bcero\,.
\]
Therefore, applying~\eqref{3.ED2} to each interval $[s_j,t_j]$
and taking limits as $j\to\infty$
yields
\[
 \int_{-\infty}^\infty\n{\bz_2(t,\w,\bz_0)}_{\Delta_t}^2dt=0\,,
\]
and since $\Delta>0$ it follows that $\bz_2(t,\w,\bz_0)\equiv\bcero$. This means that
$\bz(t,\w,\bz_0)=\lsm\bz_1(t,\w,\bz_0)\\\bcero\rsm$
is a bounded solution of the system~\eqref{3.hamilper}$^{\lambda_0}$
corresponding to $\w$. But it is immediate to check that it also solves
the system the system~\eqref{3.hamilper}$^\lambda$
corresponding to this $\w$ for all $\lambda\in\C$, including $\lambda=\lambda_0$.
The contradiction has been reached: according to Remark~\ref{2.notasED}(a),
the existence of this nontrivial bounded solution precludes the exponential
dichotomy of \eqref{3.hamilper}$^{\lambda_0}$, assumed from the beginning.
\par
(ii) We take $\lambda\in\C-\R$, so that \eqref{3.hamilper}$^\lambda$ has ED.
Let $L^\pm_\lambda$ be the invariant subbundles appearing in Definition
\ref{2.defED}, and let $l^{\pm}(\w,\lambda)$ be the corresponding sections,
given by \eqref{2.defl}. We also take
$\w\in\W$, and assume for contradiction
that there exists $\bz^0=\lsm\bcero\\\bz_2^0\rsm\in l^+(\w,\lambda)$.
Applying~\eqref{3.ED2} to the solution $\bz(t,\w,\bz^0)$ on intervals $[0,t]$ for $t>0$,
and having in mind that $\lim_{t\to\infty}\bz(t,\w,\bz^0)=\bcero$
(see \eqref{2.deflpm}), we obtain
$\int_0^\infty\n{\bz_2(t,\w,\bz_0)}_{\Delta_t}^2dt=0$. This ensures that
$\bz_2(t,\w,\bz^0)=\bcero$ for any $t\ge 0$. In particular, $\bz_2^0=\bcero$,
so that $\bz^0=\bcero$. Hence $l^+(\w,\lambda)$ contains no nontrivial vectors of the
form $\lsm\bcero\\\bz_2\rsm$, and thus it can be represented
by $\lsm I_n\\M^+(\w,\lambda)\rsm$, where $M^\pm(\w,\lambda)$ is symmetric.
An analogous argument shows the global existence of
$M^-(\w,\lambda)$. The continuity of the
map $M^+(\w,\lambda)$ on $\W\times(\C-\R)$ is guaranteed by
Theorem~\ref{2.teorSSPT}. The holomorphic character
of $\lambda\mapsto M^\pm(\w,\lambda)$ outside the real axis
can be proved repeating the argument of the proof of Theorem 3.9 of \cite{jonnf}.
\par
It remains to prove that $\pm\Ima\lambda\Ima M^\pm(\w,\lambda)\ge 0$.
To this end, we consider
the new auxiliary perturbed systems $\bz'=H^\lambda_k(\wt)\,\bz$ with
$H^\lambda_k=H+\lambda\J\,\G_k$ for $\G_k=\lsm (1/k)\,I_n&0_n\\0_n&\Delta\rsm$.
Since $\G_k>0$ for $k\ge 1$, it satisfies the general Atkinson condition and
the conclusions of Theorem~\ref{3.teoratki}
hold (see Remark~\ref{3.notaatki}(e)); thus,
if $\lambda\in\C-\R$, then there exist the corresponding Weyl functions
$M^\pm_k(\w,\lambda)$ and they satisfy $\pm\Ima\lambda\,M^\pm_k(\w,\lambda)>0$.
Fix $\lambda\notin\R$, and note that $\lim_{k\to\infty} H^\lambda_k=H^\lambda$
uniformly on $\W$. Therefore, Theorem \ref{2.teorSSPT}
ensures that $\lim_{k\to\infty}M^\pm_k(\w,\lambda)=
M^\pm(\w,\lambda)$. Consequently, $\pm\Ima\lambda\Ima M^\pm(\w,\lambda)\ge 0$,
which completes the proof of (ii).
\par
(iii) Since condition \eqref{3.atk} does not hold, there exists a
minimal subset $\mM\subseteq\W$ and a nontrivial solution
of the form $\bz(t,\w)=\lsm\bz_1(t,\w)\\\bcero\rsm$
of the systems corresponding to $\w\in\mM$
of the families \eqref{3.hamilper}$^\lambda$
for all $\lambda\in\C$: see Remark~\ref{3.notaatki}(c).
Let us fix $\wit\lambda\in\C-\R$, so that the functions
$M^\pm(\w,\lambda)$ globally exist. Let us also fix $\wit\w\in\mM$.
\par
If $\bz(0,\wit\w)=\lsm\bz_1(0,\wit\w)\\\bcero\rsm$ belongs
to $l^+(\wit\w,\wit\lambda)$, then $\bz(t,\wit\w)=\lsm\bz_1(t,\wit\w)\\\bcero\rsm$
belongs to $l^+(\wit\w{\cdot}t,\wit\lambda)$ for all $t\in\R$. Since
$l^+(\wit\w{\cdot}t,\wit\lambda)$ can be represented by
$\lsm I_n\\M^+(\wit\w{\cdot}t,\wit\lambda)\rsm$, we have
$\det M^+(\wit\w{\cdot}t,\wit\lambda)=0$. The continuity of $M^+$ and the
minimality of $\mM$ ensure that $\det M^+(\w,\wit\lambda)=0$ for
all $\w\in\mM$.
\par
Note now that $\bz(0,\wit\w)=\lsm\bz_1(0,\wit\w)\\\bcero\rsm$ belongs
to $l^+(\wit\w,\lambda)$ for all $\lambda\in\C-\R$, as we deduce from
Remark~\ref{3.notaatki}(a) and from the characterization \eqref{2.deflpm}
of the Lagrange plane. Therefore the previous argument can be repeated
in order to show that $\det M^+(\w,\lambda)=0$ for
all $\w\in\mM$ and all $\lambda\in\C-\R$.
\par
In the remaining cases, $\bz(0,\wit\w)=\bz_0^++\bz_0^-$ with
$\bz^\pm_0\in l^\pm(\wit\w,\wit\lambda)$ and $\bz_0^-\ne\bcero$, and it follows from
\eqref{2.noacotada} that
\begin{equation}\label{3.liminf}
 \lim_{t\to\infty}\n{\bz(t,\wit\w)}=\infty\,.
\end{equation}
We denote $\bz^\pm(t,\wit\w)=\bz(t,\wit\w,\bz_0^\pm)$ and observe that
$\bz(t,\wit\w)=\bz^+(t,\wit\w)+\bz^-(t,\wit\w)$ and
$(\wit\w{\cdot}t,\bz^\pm(t,\wit\w))\in L^\pm_{\wit\lambda}$
for all $t\in\R$.
Now we take $\w\in\mM$ and
choose $(t_m)\uparrow\infty$ with $\lim_{m\to\infty}\wit\w{\cdot}t_m=\w$
and such that there exists $\bz^*:=
\lim_{m\to\infty}\bz(t_m,\wit\w)/\n{\bz(t_m,\wit\w)}$.
It follows from~\eqref{2.deflpm} and~\eqref{3.liminf}
that $\lim_{m\to\infty}\bz^+(t_m,\wit\w)/\n{\bz(t_m,\wit\w)}=\bcero$, so that
$\lim_{m\to\infty}\bz^-(t_m,\wit\w)/\n{\bz(t_m,\wit\w)}=
\lim_{m\to\infty}\bz(t_m,\wit\w)/\n{\bz(t_m,\wit\w)}=\bz^*$.
The closed character of $L^-_{\wit\lambda}$ and the fact that
$\bz^-(t_m,\wit\w)/\n{\bz(t_m,\wit\w)}\in l^-(\wit\w{\cdot}t_m,\wit\lambda)$
ensure that $\bz^*\in l^-(\w,\wit\lambda)$.
%And clearly, $\bz^*=\lsm\bz^*_1\\\bcero\rsm$, so that
%$M^-(\w,\wit\lambda)\,\bz_1^*=\bcero$. In particular,
%$\det M^-(\w,\wit\lambda)=0$ for all $\w\in\mM$.
In addition, $\bz^*$ is the initial data of a solution of \eqref{3.hamilper}$^\lambda$
of the form $\bz^*(t,\w)=\lsm\bz^*_1(t,\w)\\\bcero\rsm$ for all
$\lambda\in\C-\R$ (see again Remark~\ref{3.notaatki}(a)),
and hence $\bz^*=\lsm\bz_1^*\\\bcero\rsm\in l^-(\w,\lambda)$ for all
$\lambda\in\C-\R$. Therefore, $\det M^-(\w,\lambda)=0$ for all $\lambda\in\C-\R$.
And also for all $\w\in\mM$, since $\w$ has been arbitrarily chosen.
This completes the proof.
\end{proof}
The statement of the previous theorem and the proof of its point (i)
prove the next result.
\begin{coro}\label{3.coronoatki}
Suppose that the continuous matrix-valued function $\Delta\colon\W\to\s_n(\R)$
takes positive definite values.
Then, there are two dynamical possibilities for
the families~\eqref{3.hamilper}$^\lambda$:
\begin{itemize}
\item[O1.] There exist $\lambda_0\in\C$ such that the
family~\eqref{3.hamilper}$^{\lambda_0}$ has exponential
dichotomy. In this case the families
\eqref{3.hamilper}$^{\lambda}$ have exponential dichotomy
for (at least) all $\lambda\in\C-\R$,
and the Weyl functions $M^\pm(\w,\lambda)$ globally exist for all $\lambda
\in\C-\R$ and are Herglotz functions.
\item[O2.] The family~\eqref{3.hamilper}$^\lambda$
does not have exponential dichotomy
for any $\lambda\in\C$. Equivalently, there exists a point $\w\in\W$ and
a $\lambda_0\in\C$ such that the system~\eqref{3.hamilper}$^{\lambda_0}$
corresponding to $\w$ admits a nonzero bounded solution of the form
$\bz(t,\w)=\lsm\bz_1(t,\w)\\\bcero\rsm$, in which case this function
solves the system~\eqref{3.hamilper}$^\lambda$
corresponding to $\w$ for all $\lambda\in\C$.
\end{itemize}
\end{coro}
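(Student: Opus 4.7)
The plan is to observe that the two alternatives O1 and O2 are logical complements of each other: either there exists some $\lambda_0 \in \C$ for which $\eqref{3.hamilper}^{\lambda_0}$ has exponential dichotomy, or no such $\lambda_0$ exists. So the dichotomy itself is automatic; what needs verification is (a) the extra content of O1 in the first case, and (b) the equivalent characterization of O2 in the second case. For (a) I would simply invoke Theorem~\ref{3.teornoatki}(i)--(ii): the existence of some $\lambda_0$ with ED is precisely Hypothesis~\ref{3.hiposED}, and the theorem furnishes ED at every non-real $\lambda$, together with global existence, joint continuity, holomorphy on $\C-\R$, and the Herglotz property of $M^\pm(\w,\lambda)$.

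For (b) the task is to prove the equivalence between ``no $\eqref{3.hamilper}^{\lambda}$ has ED for any $\lambda\in\C$'' and ``there exist $\w\in\W$ and $\lambda_0\in\C$ such that $\eqref{3.hamilper}^{\lambda_0}$ admits a nontrivial bounded solution at $\w$ of the form $\lsm\bz_1(t,\w)\\\bcero\rsm$.'' The backward direction is essentially a one-liner: by Remark~\ref{3.notaatki}(a) such a solution automatically satisfies $\eqref{3.hamilper}^{\lambda}$ for every $\lambda\in\C$, and Remark~\ref{2.notasED}(a) then precludes ED of each $\eqref{3.hamilper}^{\lambda}$, so O1 fails.

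The forward direction is the only substantive step, and I would handle it by recycling the first part of the proof of Theorem~\ref{3.teornoatki}(i). Concretely, I would fix an arbitrary $\lambda_0\in\C-\R$ and use Remark~\ref{2.notasED}(b) to extract, from the failure of ED of $\eqref{3.hamilper}^{\lambda_0}$, a point $\w\in\W$ and a nontrivial bounded solution $\bz(t,\w,\bz_0)$ of that system. This is exactly the configuration assumed \emph{for contradiction} in Theorem~\ref{3.teornoatki}(i); I would then run the same calculation forward (the Green-type identity \eqref{3.ED2} with $\Ima\lambda_0\ne 0$ yielding $\int_\R \n{\bz_2(t,\w,\bz_0)}_{\Delta_t}^2\,dt<\infty$, the compactness extraction of $(t_m)\uparrow\infty$ and $(s_m)\downarrow -\infty$ along which $\bz_2\to\bcero$, and a final application of \eqref{3.ED2} on $[s_m,t_m]$ together with $\Delta>0$) to deduce $\bz_2(t,\w,\bz_0)\equiv\bcero$. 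This is precisely the desired special form.

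The only real obstacle is conceptual rather than computational: in Theorem~\ref{3.teornoatki}(i) the identity $\bz_2\equiv\bcero$ served as a tool to contradict the ED hypothesis of some $\eqref{3.hamilper}^{\lambda_H}$, whereas here I discard that hypothesis and treat $\bz_2\equiv\bcero$ as the target conclusion. Once this is recognized, the corollary follows with no further calculation, as the excerpt already emphasizes when it says that the statement and the proof of its point (i) are enough.
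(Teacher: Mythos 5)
Your proposal is correct and matches the paper's intended argument exactly: the paper justifies the corollary by the single sentence ``The statement of the previous theorem and the proof of its point (i) prove the next result,'' which is precisely your plan of invoking Theorem~\ref{3.teornoatki}(i)--(ii) for O1 and re-running the Green-identity computation from its proof (without the contradiction framing) together with Remarks~\ref{3.notaatki}(a) and \ref{2.notasED}(a)--(b) for the O2 equivalence. Your spelled-out version, including the observation that the role of the identity $\bz_2\equiv\bcero$ shifts from a step in a reductio to the target conclusion, is a faithful and slightly more explicit rendering of what the paper leaves implicit.
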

Note that situation O2 is extremely non-persistent.
For instance, O1 holds in the following cases:
\begin{itemize}
\item[-] When $\Delta$ satisfies the Atkinson Hypothesis~\ref{3.hipoatki},
as Theorem~\ref{3.teoratki} ensures.
\item[-] When $\det H_2$ does not vanish identically on any minimal subset
$\mM\subset\W$: Remark~\ref{3.notaatki}(d) ensures that in this case
$\Delta$ satisfies the Atkinson Hypothesis~\ref{3.hipoatki}.
\item[-] If the $n$-dimensional family of systems
$\bz_1'=H_1(\wt)\,\bz_1$ has exponential
dichotomy, since any nonzero solution
$\lsm\bz_1(t,\w)\\\bcero\rsm$ of \eqref{3.hamilper} provides a nonzero
solution of $\bz_1(t,\w)$ of $\bz_1'=H_1(\wt)\,\bz_1$ which cannot be bounded
(see Remark~\ref{2.notasED}(a)).
\end{itemize}
\par
We conclude this section with another consequence of Theorem~\ref{3.teornoatki}
which concerns other type of perturbed systems, namely
\begin{equation}\label{3.hamilper2}
\bz'=\wit H^\lambda(\wt)\,\bz\,,\quad \text{where }\;
 \wit H^\lambda(\w):=\left[\begin{array}{cc} H_1(\wt)&
 H_3(\w)\\
 H_2(\w)+\lambda\Delta(\w)&-H_1^T(\w)\end{array}\right].
\end{equation}
\begin{coro}\label{3.coro2}
Suppose that the continuous matrix-valued function $\Delta\colon\W\to\s_n(\R)$
takes positive definite values.
Then, there are two dynamical possibilities for
the families~\eqref{3.hamilper2}$^\lambda$:
\begin{itemize}
\item[O1$^*$.] There exist $\lambda_0\in\C$ such that the
family~\eqref{3.hamilper2}$^{\lambda_0}$ has exponential dichotomy.
In this case the families
\eqref{3.hamilper}$^{\lambda}$ have exponential dichotomy
for (at least) all $\lambda\in\C-\R$.
\item[O2$^*$.] The family~\eqref{3.hamilper2}$^\lambda$
does not have exponential dichotomy
for any $\lambda\in\C$. Equivalently, there exists a point $\w\in\W$ and
a $\lambda_0\in\C$ such that the system~\eqref{3.hamilper2}$^{\lambda_0}$
corresponding to $\w$ admits a nonzero bounded solution of the form
$\bz(t,\w)=\lsm\bcero\\\bz_2(t,\w)\\\rsm$, in which case this function
solves the system~\eqref{3.hamilper2}$^\lambda$
corresponding to $\w$ for all $\lambda\in\C$.
\end{itemize}
\end{coro}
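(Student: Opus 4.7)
The plan is to reduce the corollary to Corollary~\ref{3.coronoatki} by a constant symplectic change of variables that swaps the upper and lower blocks of each Hamiltonian system. Concretely, for each fixed $\lambda\in\C$, I would set $\bw:=J\bz$ in the family \eqref{3.hamilper2}$^\lambda$. Since $J$ is invertible and constant in $t$, $\bz$ solves $\bz'=\wit H^\lambda(\wt)\,\bz$ if and only if $\bw$ solves $\bw'=J\wit H^\lambda(\wt)J^{-1}\,\bw$. A direct block computation (using $J^{-1}=-J$) gives
\[
J\wit H^\lambda J^{-1}=\left[\begin{array}{cc}-H_1^T & -H_2-\lambda\Delta\\ -H_3 & H_1\end{array}\right],
\]
which is exactly of the shape of the matrix $H^\mu$ appearing in the family \eqref{3.hamilper}, with the identifications $H_1^{\text{new}}=-H_1^T$, $H_3^{\text{new}}=-H_2$, $H_2^{\text{new}}=-H_3$, $\Delta^{\text{new}}=\Delta>0$, and spectral parameter $\mu=-\lambda$.

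Since $\bw\mapsto J^{-1}\bw$ is a time-independent linear isomorphism, it takes bounded solutions to bounded solutions and invariant splittings into invariant splittings with the same exponential rates. Hence the transformed family satisfies the hypotheses of Corollary~\ref{3.coronoatki} (positive definiteness of the perturbing matrix), and \eqref{3.hamilper2}$^\lambda$ has exponential dichotomy if and only if the transformed system with parameter $\mu=-\lambda$ has exponential dichotomy. Moreover, a solution of \eqref{3.hamilper2}$^{\lambda_0}$ of the form $\bz=\lsm\bcero\\\bz_2\rsm$ corresponds, under $\bw=J\bz$, to a solution of the transformed system of the form $\bw=\lsm-\bz_2\\\bcero\rsm$, i.e.\ to a solution of the form $\lsm\bz_1^{\text{new}}\\\bcero\rsm$ of the transformed family — precisely the degeneracy that appears in O2 of Corollary~\ref{3.coronoatki}. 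By Remark~\ref{3.notaatki}(a) applied to the transformed family, such a solution persists for all $\mu\in\C$; pulling this back through $\bz=J^{-1}\bw$ and $\mu=-\lambda$ gives the statement that a nonzero bounded solution $\lsm\bcero\\\bz_2\rsm$ of \eqref{3.hamilper2}$^{\lambda_0}$ solves \eqref{3.hamilper2}$^\lambda$ for every $\lambda\in\C$.

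Putting these two correspondences together, the dichotomy O1/O2 of Corollary~\ref{3.coronoatki} applied to the transformed family translates, block by block, into the dichotomy O1$^*$/O2$^*$ for the original family \eqref{3.hamilper2}$^\lambda$. There is no substantive obstacle here: the content is the symplectic conjugation together with the already-established Corollary~\ref{3.coronoatki}; the only care needed is to verify the block computation of $J\wit H^\lambda J^{-1}$ and to track correctly how the degenerate solution shape transforms under the change of variables.
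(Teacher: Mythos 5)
Your proof is correct and follows essentially the same path as the paper: a constant block-swapping change of variables reduces \eqref{3.hamilper2} to a family of the form \eqref{3.hamilper}, and the equivalence of exponential dichotomy and of the existence of degenerate bounded solutions is transported through it. The only cosmetic difference is that the paper conjugates by $\lsm 0_n&I_n\\I_n&0_n\rsm$ (which preserves the parameter $\lambda$), whereas you conjugate by $J$ (which is symplectic and produces $\mu=-\lambda$); since the sets $\C$ and $\C-\R$ are invariant under $\lambda\mapsto-\lambda$, this changes nothing in the conclusion.
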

\begin{proof}
It is easy to check that
the change of variables $\bw=\lsm 0_n&I_n\\I_n&0_n\rsm\bz$ takes
\eqref{3.hamilper2} to
\begin{equation}\label{3.hamilper3}
 \bw'=\left[\begin{array}{cc} -H_1^T(\wt)&
  H_2(\wt)+\lambda\Delta(\wt)\\H_3(\wt)&H_1(\wt)\end{array}\right]\bw\,,
\end{equation}
which is in one of the situations described in Corollary~\ref{3.coronoatki}.
Obviously a nonzero bounded solution exists for one of the systems
of \eqref{3.hamilper3}$^\lambda$ if and only a nonzero bounded solution
exists for one of the systems of \eqref{3.hamilper2}$^\lambda$.
This fact allows us to deduce from Remark~\ref{2.notasED}(b) that the ED
holds or not simultaneously for \eqref{3.hamilper3}$^\lambda$ and
\eqref{3.hamilper2}$^\lambda$. Therefore, the assertions follow
from Corollary~\ref{3.coronoatki}.
\end{proof}
\begin{nota}
(a)~Note that a family of the type \eqref{3.hamilper2} arises when dealing with
the $n$-dimensional Schr\"{o}dinger family $\bx'+G(\wt)\,\bx=\lambda\Delta(\wt)$,
by taking $\bz=\lsm\bx\\\bx'\rsm$. It is known (and very easy to check) that,
in this case, the perturbation matrix $\Gamma=\lsm\Delta&0_n\\0_n&0_n\rsm$
satisfies the general Atkinson condition \eqref{3.atkgen}, so that
the statements of Theorem~\ref{3.teoratki} hold in this case. In particular,
the Schr\"{o}dinger case is in situation O1$^*$. But clearly the situation
that we consider in Corollary~\ref{3.coro2} is much more general.
\par
(b)~A linear Hamiltonian system admitting a nontrivial bounded solution
of the form $\bz(t)=\lsm\bcero\\\bz_2(t)\\\rsm$ on a
positive of negative half-line is called {\em abnormal at $+\infty$}
or {\em at $-\infty$}.
Note that, in situation O2$^*$, each one of the families
\eqref{3.hamilper2}$^\lambda$ has an abnormal system both
at $+\infty$ and at $-\infty$. The reader is referred to
\cite{reid3,reid8,sesi1,sesi2,sesi3,fjnn2,jnno2} and references
therein for an analysis of abnormal linear Hamiltonian systems.
\end{nota}
%%%%%%%%%%%%%%%%%%%%%%%%%%%%%%%%%%%%%%%%%%%%%%%%%%%%%%%%%%%%%%%%%%%%%%%%%%%%%%%%%%%%%
%%%%%%%%%%%%%%%%%%%%%%%%%%%%%%%%%%%%%%%%%%%%%%%%%%%%%%%%%%%%%%%%%%%%%%%%%%%%%%%%%%%%%
%%%%%%%%%%%%%%%%%%%%%%%%%%%%%%%%%%%%%%%%%%%%%%%%%%%%%%%%%%%%%%%%%%%%%%%%%%%%%%%%%%%%%
%%%%%%%%%%%%%%%%%%%%%%%%%%%%%%%%%%%%%%%%%%%%%%%%%%%%%%%%%%%%%%%%%%%%%%%%%%%%%%%%%%%%%
\section{Exponential dichotomy and nonoscillation condition
for parametric families}\label{sec4}
As in the previous section, $(\W,\sigma)$ is a real continuous global flow on a
compact metric space, and $\wt=\sigma(t,\w)$.
Given continuous functions
$H\colon\W\to\mathfrak{sp}(n,\R)$ with $H=\lsm H_1&H_3\\H_2&-H_1^T\rsm$
and $\Delta\colon\W\to\s_n(\R)$,
we consider the families of linear Hamiltonian systems
\begin{equation}\label{4.hamil}
 \bz'=H(\wt)\,\bz
\end{equation}
and
\begin{equation}\label{4.hamilperzl}
 \bz'=H^\lambda(\wt)\,\bz\,,
 \quad \text{where }\;H^\lambda(\w):=\left[\begin{array}{cc} H_1(\w)&
 H_3(\w)\\
 H_2(\w)-\lambda\Delta(\w)&-H_1^T(\w)\end{array}\right]
\end{equation}
for $\w\in\W$. The parameter $\lambda$ may vary in $\C$, although
our results will refer to real values of $\lambda$.
We will use the notation \eqref{4.hamilperzl}$^\lambda$ to refer to the
family corresponding to a particular value of $\lambda$.
Note that \eqref{4.hamilperzl}$^0$ and \eqref{4.hamil} coincide.
\par
The concepts of ED and NC appearing in the next set
of conditions, under which the results of this section will be obtained,
are given in Definitions \ref{2.defED} and \ref{2.defNC}.
\begin{hipo}\label{4.hipos}
$H_3\ge 0$, $\Delta>0$, and
the family \eqref{4.hamil} has exponential dichotomy and it satisfies
the nonoscillation condition.
\end{hipo}
The main goal of this section is to prove Theorem~\ref{4.teormejor},
in which statement a new set of
families of linear Hamiltonian system plays a role:
\begin{equation}\label{4.hamilperzle}
 \bz'=H_\ep^\lambda(\wt)\,\bz\,,\quad
 \text{where }\; H_\ep^\lambda(\w):=\left[\begin{array}{cc} H_1(\w)&
 H_3(\w)+\ep I_n\\
 H_2(\w)-\lambda\Delta(\w)&-H_1^T(\w)\end{array}\right]\bz
\end{equation}
for $\ep\in\R$. We will use the notation \eqref{4.hamilperzle}$^\lambda_\ep$
to refer to the family corresponding to particular values of $\lambda$
and $\ep$. Note that \eqref{4.hamilperzle}$^\lambda_0$ and
\eqref{4.hamilperzl}$^\lambda$ agree.
The systems of the family \eqref{4.hamilperzle} will play the role of \lq\lq UWD approximants" of the systems of the family \eqref{4.hamilperzl}: these last ones
inherit from them spectral and dynamical properties which will be essential in the
proof of Theorem~\ref{4.teormejor}. This proof also requires the characterization
of the exponential dichotomy and the uniform weak disconjugacy
in terms of the variation of the rotation number, recalled in properties \hyperlink{p7}{\bf p7}, \hyperlink{p8}{\bf p8} and \hyperlink{p9}{\bf p9},
which in turn
requires the existence of a $\sigma$-ergodic measure with full topological
support.
\begin{teor}\label{4.teormejor}
Suppose that Hypothesis~\ref{4.hipos} holds,
and that there exists a $\sigma$-ergodic
measure $m_0$ on $\W$ with full topological support. Let us define
\begin{equation}\label{4.defI}
 \mI:=\{\alpha\in\R\,|\;\eqref{4.hamilperzl}^\alpha \text{ has
 ED and satisfies NC\/}\}\,.
\end{equation}
and $\mI_0$ by \eqref{4.defI0}.
Then,
\begin{itemize}
\item[\rm(i)] there exists $\alpha^*\in(0,\infty]$ such that
$\mI=(-\infty,\alpha^*)$, and
\[
M^+(\w,\alpha_1)<M^+(\w,\alpha_2)
\]
for every
$\w\in\W$ and for every pair of elements $\alpha_1<\alpha_2$ of $\mI$.
\item[\rm(ii)] There exists a nonincreasing and continuous extended-real
function $\rho\colon\mI\to(0,\infty]$ such that \eqref{4.hamilperzle}$_\ep^\alpha$
has ED and is UWD for $\alpha\in\mI$ if and only if
$\ep\in(0,\rho(\alpha))$. In particular, for these values of $\ep$, there
exist the Weyl functions $M^\pm_\ep(\w,\alpha)$.
\item[\rm(iii)]
In addition, $\rho(\alpha)=\infty$ whenever $H_2-\alpha\Delta>0$, and
$\rho$ is strictly decreasing at the points at which it takes real values
(if they exist).
\item[\rm(iv)] If $\alpha^*<\infty$, then the family
\eqref{4.hamilperzl}$^{\alpha^*}$ does not have ED.
\end{itemize}
\end{teor}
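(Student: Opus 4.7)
The overarching strategy is to use the auxiliary family \eqref{4.hamilperzle}, whose $(1,2)$-block equals $H_3+\ep I_n>0$ when $\ep>0$, as a bridge: this strict positivity triggers the UWD-based tools \hyperlink{p2}{\bf p2}, \hyperlink{p3}{\bf p3}, \hyperlink{p5}{\bf p5} and \hyperlink{p9}{\bf p9}, while the limit $\ep\to 0^+$ recovers the original family \eqref{4.hamilperzl}. A short computation shows that $JH^\alpha_\ep$ is jointly nondecreasing in $(\alpha,\ep)$ (the $\alpha$-direction adds $\alpha\Delta\ge 0$ to the $(1,1)$-block, the $\ep$-direction adds $\ep I_n\ge 0$ to the $(2,2)$-block), so \hyperlink{p5}{\bf p5} becomes the central comparison tool. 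Openness of $\mI$ is immediate from Theorem \ref{2.teorSSPT}. The strict inequality $M^+(\w,\alpha_1)<M^+(\w,\alpha_2)$ for $\alpha_1<\alpha_2$ in $\mI$ then follows from a Lyapunov-type argument: writing $\Delta M:=M^+(\cdot,\alpha_2)-M^+(\cdot,\alpha_1)$ and subtracting two copies of the Riccati equation \eqref{2.riccati} gives $(\Delta M)'(\wt)=-A^T(\wt)\Delta M(\wt)-\Delta M(\wt) A(\wt)-(\alpha_2-\alpha_1)\Delta(\wt)$ along every orbit, with $A=H_1+H_3 M^+(\cdot,\alpha_2)$; non-strict positivity comes from \hyperlink{p5}{\bf p5} applied to the $\ep$-family followed by $\ep\to 0^+$, and the strict form follows as in Lemma \ref{2.lemaRic}(ii), since any null eigendirection $v$ of $\Delta M(\w_0)$ would force $v^T(\Delta M)'(\w_0) v=-(\alpha_2-\alpha_1) v^T\Delta(\w_0) v<0$, contradicting $\Delta M\ge 0$.

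To establish that $\mI=(-\infty,\alpha^*)$, fix $\alpha_0\in\mI$ and $\alpha_1<\alpha_0$; for $\ep$ in a small interval $(0,\ep_0)$ chosen via Theorem \ref{2.teorSSPT}, one has ED and NC at $(\alpha_0,\ep)$, hence UWD by \hyperlink{p2}{\bf p2} and $M^\pm_\ep(\cdot,\alpha_0)=N^\pm_\ep(\cdot,\alpha_0)$ by \hyperlink{p3}{\bf p3}. The comparison \hyperlink{p5}{\bf p5} transfers UWD to $(\alpha_1,\ep)$ and preserves the strict chain $N^+_\ep(\cdot,\alpha_1)\le N^+_\ep(\cdot,\alpha_0)<N^-_\ep(\cdot,\alpha_0)\le N^-_\ep(\cdot,\alpha_1)$, so ED holds at $(\alpha_1,\ep)$ and, by \hyperlink{p3}{\bf p3} again, NC holds too. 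Letting $\ep\to 0^+$ and using monotonicity of $M^\pm_\ep$ in $\ep$ with the bounds inherited from $\alpha_0$ together with the continuity of the Weyl functions in Theorem \ref{2.teorSSPT}(ii) produces ED and NC at $(\alpha_1,0)$. For Parts (ii) and (iii), define $\rho(\alpha):=\sup\{\ep>0:\eqref{4.hamilperzle}^\alpha_\delta\text{ has ED and UWD for all }\delta\in(0,\ep)\}$; robustness plus \hyperlink{p2}{\bf p2} give $\rho>0$ on $\mI$, while Lemma \ref{2.lemaRic} gives $\rho(\alpha)=\infty$ whenever $H_2-\alpha\Delta>0$. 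The nonincreasing character of $\rho$ again comes from \hyperlink{p5}{\bf p5}. Continuity and strict decrease where $\rho$ is finite follow from the rotation-number characterization in \hyperlink{p7}{\bf p7}--\hyperlink{p10}{\bf p10}: the open region in the $(\alpha,\ep)$-plane where ED and UWD hold carries identically zero rotation number, and its boundary cannot contain flat horizontal segments, since along such a segment the rotation number would remain constant and \hyperlink{p7}{\bf p7} applied to the combined perturbation direction $\alpha\Delta\oplus\ep I_n$ (which is positive definite for $\alpha,\ep>0$) would extend ED beyond $\rho(\alpha)$.

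Finally, for Part (iv), suppose $\alpha^*<\infty$ and, for a contradiction, that \eqref{4.hamilperzl}$^{\alpha^*}$ has ED. Theorem \ref{2.teorSSPT} furnishes a two-parameter neighborhood of $(\alpha^*,0)$ on which ED persists; by \hyperlink{p10}{\bf p10} the rotation number equals zero on $\mI$, and by \hyperlink{p8}{\bf p8} it remains zero throughout this neighborhood. For small $\ep>0$ one then has $H_3+\ep I_n>0$ together with vanishing rotation number, so UWD holds by \hyperlink{p9}{\bf p9} and NC by \hyperlink{p3}{\bf p3}; hence $M^+_\ep(\w,\alpha^*)$ globally exists for such $\ep$, is nondecreasing in $\ep$ by \hyperlink{p5}{\bf p5}, and is bounded below by $M^+(\w,\alpha,0)$ for any $\alpha\in\mI$ with $\alpha<\alpha^*$ (through the inequality chain $M^+(\w,\alpha,0)\le M^+(\w,\alpha,\ep)\le M^+(\w,\alpha^*,\ep)$ coming from joint monotonicity). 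It therefore converges as $\ep\to 0^+$ to a finite continuous symmetric matrix, which by Theorem \ref{2.teorSSPT}(i) represents $l^+(\w,\alpha^*,0)$. This yields NC at $\alpha^*$, so $\alpha^*\in\mI$, contradicting that $\mI$ is open. The chief obstacle, shared by Parts (i) and (iv), is controlling the $\ep\to 0^+$ limit so that the Weyl and principal functions stay bounded rather than rotating to a non-graph Lagrange plane; this is achieved by the joint monotonicity of $M^\pm_\ep$ in $(\alpha,\ep)$ anchored at a point of $\mI$, which ultimately rests on Hypothesis \ref{4.hipos}.
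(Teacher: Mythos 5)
Your overall strategy---work in the $\ep$-regularized family \eqref{4.hamilperzle}, exploit the comparison tools \hyperlink{p2}{p2}, \hyperlink{p3}{p3}, \hyperlink{p5}{p5} for $\ep>0$, and then pass to $\ep\to 0^+$---captures a genuine part of the paper's argument, and your treatment of part (iv) is essentially the same as the paper's. But the proof of part (i) has a real gap at the step ``Letting $\ep\to 0^+$ \dots produces ED and NC at $(\alpha_1,0)$.'' Exponential dichotomy is an \emph{open} condition, not a closed one: knowing that \eqref{4.hamilperzle}$^{\alpha_1}_\ep$ has ED for all $\ep\in(0,\ep_0)$ does not give ED at $\ep=0$, and the monotone bounds you invoke go the wrong way. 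By \hyperlink{p5}{p5}, $M^+_\ep(\w,\alpha_1)$ is nondecreasing in $\ep$, so as $\ep\to 0^+$ it \emph{decreases}; the comparison with $\alpha_0$ only supplies an \emph{upper} bound $M^+_\ep(\w,\alpha_1)\le M^+_\ep(\w,\alpha_0)$, not the lower bound you would need to rule out $M^+_\ep(\w,\alpha_1)\to-\infty$ (i.e.\ $l^+$ degenerating to a non-graph Lagrange plane). Theorem~\ref{2.teorSSPT}(ii) cannot rescue this, since it only gives continuity \emph{after} ED at the reference point is already known.

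The paper fills exactly this hole by two devices you do not use. First, for $\alpha$ sufficiently negative (so that $H_2-\alpha\Delta>0$), it invokes Theorem~\ref{4.teorizq}, whose proof runs through the Nevanlinna--Riesz--Herglotz representation (Theorem~\ref{4.teorherg}), the weak$^*$ convergence of the spectral measures, and the Schwarz reflection principle to pass Weyl functions from $\Ima\lambda>0$ down to the real axis and identify them with principal functions. Second, for the intermediate range $\alpha\in(\alpha_0,0)$, it joins $(\alpha_0,\ep_0)$ to $(\alpha_1,0)$ by a \emph{slanted} line in the $(\alpha,\ep)$-plane, along which the perturbation matrix $\Gamma=\lsm\Delta&0_n\\0_n&(-\ep_2/\alpha_1)I_n\rsm$ is strictly positive definite; this makes \hyperlink{p7}{p7} applicable on the segment (Atkinson holds there), and vanishing of the rotation number at both endpoints (by \hyperlink{p9}{p9} and \hyperlink{p10}{p10}) together with monotonicity (\hyperlink{p6}{p6}) forces constancy of the rotation number, hence ED along the whole segment---in particular at $(\alpha_1,0)$. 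Your proposal only wheels out the rotation-number characterization for parts (ii)--(iii), but it is precisely part (i) that needs it. A secondary, fixable issue: your formula for $(\Delta M)'$ misses the term $\Delta M\,H_3\,\Delta M\ge 0$; this does not spoil the Lyapunov argument, but the equation as written is not quite right.

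For parts (ii)--(iii) the paper's argument additionally leans on Theorem~\ref{4.teor5.61} (applied along the $\alpha$-direction at the critical value $\ep=\rho(\wit\alpha)$) to get strict decrease of $\rho$, and on a Herglotz/reflection argument (properties s6--s7 inside the proof) to upgrade lower semicontinuity of $\rho$ from Theorem~\ref{4.teorlibro}(ii) to genuine continuity; your sketch of ``no flat horizontal segments on the boundary'' gestures at \hyperlink{p7}{p7} but does not handle the boundary values of the rotation number, where ED may fail. So in summary: the framework of your proposal is right, part (iv) is essentially the paper's argument, but part (i) contains an irreparable gap at the $\ep\to 0^+$ limit, and parts (ii)--(iii) are too coarse to replace the paper's use of Theorems \ref{4.teorlibro}, \ref{4.teor5.61} and the Herglotz machinery.
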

An analysis of the presence of ED for the family \eqref{4.hamilperzl} assuming that
it satisfies the UWD property has been previously carried out in \cite{jnuo} and \cite{jonnf}. But Theorem~\ref{4.teormejor} improves this analysis significantly.
Its proof is based on the following result, which it extends, and which is part of
Theorem 7.31 of \cite{jonnf}. The result of \cite{jonnf} is formulated
in the case that $\W$ is minimal and assuming
that a certain condition D2 does not hold, but an identical proof
works for the statement we give now.
\begin{teor}\label{4.teorlibro}
Suppose that Hypothesis \ref{4.hipos} holds.
Let us define
\begin{equation}\label{4.defI0}
\begin{split}
 \mI_0:=\{0\}\cup\{\alpha_0\in\R\,|\;&\eqref{4.hamilperzl}^\alpha \text{ has
 ED and satisfies NC}\\
 &\text{for all }\alpha\in[0,\alpha_0)
 \text{ or } \alpha\in(\alpha_0,0]\}\}\,.
\end{split}
\end{equation}
Then,
\begin{itemize}
\item[\rm(i)] $\mI_0$ is an open interval containing $0$,
and $M^+(\w,\alpha_1)<M^+(\w,\alpha_2)$ for every $\w\in\W$ and for every
pair of elements $\alpha_1<\alpha_2$ of $\mI_0$.
\item[\rm(ii)] There exists a nonincreasing and lower semicontinuous extended-real
function $\rho\colon\mI_0\to(0,\infty]$ such that \eqref{4.hamilperzle}$_\ep^\alpha$
has ED and is UWD for $\alpha\in\mI_0$ if and only if
$\ep\in(0,\rho(\alpha))$.
%In particular, for these values of $\ep$, there
%exist the Weyl functions $M^\pm_\ep(\w,\alpha)$.
\end{itemize}
\end{teor}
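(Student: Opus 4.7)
The strategy is to use the auxiliary families \eqref{4.hamilperzle}$^\alpha_\ep$ with $\ep>0$ as UWD approximants of \eqref{4.hamilperzl}$^\alpha$: for $\ep>0$ the upper-right block $H_3+\ep I_n$ is strictly positive definite, which unlocks the principal-function machinery of properties \hyperlink{p1}{\bf p1}--\hyperlink{p5}{\bf p5}, and the Sacker--Sell robustness theorem (Theorem~\ref{2.teorSSPT}) will let us pass to the limit $\ep\to 0^+$ and recover the corresponding statements for \eqref{4.hamilperzl}.

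For (i), $\mI_0$ is an interval directly from \eqref{4.defI0}: if $\alpha_0>0$ lies in $\mI_0$ and $0<\beta<\alpha_0$, then $[0,\beta)\subseteq[0,\alpha_0)$ forces $\beta\in\mI_0$. Openness comes from Theorem~\ref{2.teorSSPT}: both ED and the global existence of $M^+$ persist under $C^0$-small perturbations of the coefficient, and the family \eqref{4.hamilperzl}$^\alpha$ depends affinely on $\alpha$, so the set of $\alpha$'s with ED and NC is open and $\mI_0$ identifies with its connected component containing~$0$. The comparison $M^+(\w,\alpha_1)<M^+(\w,\alpha_2)$ for $\alpha_1<\alpha_2$ in $\mI_0$ is proved in two steps. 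First, for each $\ep>0$ the identity
\[
JH^{\alpha_2}_\ep-JH^{\alpha_1}_\ep=\begin{bmatrix}(\alpha_2-\alpha_1)\Delta&0_n\\0_n&0_n\end{bmatrix}\ge 0,
\]
combined with \hyperlink{p5}{\bf p5} applied to $H^1=H^{\alpha_1}_\ep$ and $H^2=H^{\alpha_2}_\ep$, yields $N^+_\ep(\w,\alpha_1)\le N^+_\ep(\w,\alpha_2)$, and \hyperlink{p3}{\bf p3} identifies $M^+_\ep$ with $N^+_\ep$ under ED; letting $\ep\to 0^+$ via Theorem~\ref{2.teorSSPT} gives the non-strict inequality $M^+(\w,\alpha_1)\le M^+(\w,\alpha_2)$. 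Strictness follows from a direct Riccati argument: setting $M_i(t):=M^+(\w{\cdot}t,\alpha_i)$ and $D:=M_2-M_1\ge 0$, the subtraction of the two Riccati equations produces
\[
\dot D=-DH_3(\wt)M_2-M_1H_3(\wt)D-DH_1(\wt)-H_1^T(\wt)D-(\alpha_2-\alpha_1)\Delta(\wt),
\]
and if $D(t_0)\xi=\bcero$ for some $t_0$ and some $\xi\ne\bcero$, all the $D$-terms annihilate $\xi$, giving $(\xi^T D(t)\xi)'|_{t=t_0}=-(\alpha_2-\alpha_1)\xi^T\Delta(\w{\cdot}t_0)\xi<0$; this contradicts the fact that $t\mapsto\xi^T D(t)\xi\ge0$ attains its minimum $0$ at $t_0$. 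Hence $D(\w)>0$ for every $\w\in\W$.

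For (ii) I define $\rho(\alpha):=\sup\{\ep>0:\eqref{4.hamilperzle}^\alpha_\ep$ has ED and is UWD$\}$. Positivity of $\rho$ on $\mI_0$ follows again from robustness: Theorem~\ref{2.teorSSPT} propagates ED and the global existence of $M^+$ to a whole neighborhood of $(\alpha,0)$ in the parameter plane, and for $\ep>0$ the strict positivity $H_3+\ep I_n>0$ upgrades the persistent NC to UWD via \hyperlink{p2}{\bf p2}, so $\rho(\alpha)\in(0,\infty]$. That the admissible set of $\ep$ is exactly the interval $(0,\rho(\alpha))$ uses an $\ep$-direction version of the same comparison: for $0<\ep_1<\ep_2$ one has $JH^\alpha_{\ep_1}\le JH^\alpha_{\ep_2}$, so by \hyperlink{p5}{\bf p5} UWD descends from $\ep_2$ to $\ep_1$ and the principal functions widen, whence $N^+_{(2)}<N^-_{(2)}$ (ED at $\ep_2$, via \hyperlink{p3}{\bf p3}) implies $N^+_{(1)}<N^-_{(1)}$, and \hyperlink{p3}{\bf p3} delivers ED at $\ep_1$ too.

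Monotonicity of $\rho$ in $\alpha$ is the same \hyperlink{p5}{\bf p5} comparison applied in the $\alpha$-direction: for $\alpha_1<\alpha_2$ in $\mI_0$ and $\ep$ admissible at $\alpha_2$, the inequality $JH^{\alpha_1}_\ep\le JH^{\alpha_2}_\ep$ gives UWD at $(\alpha_1,\ep)$ together with $N^+_\ep(\cdot,\alpha_1)\le N^+_\ep(\cdot,\alpha_2)<N^-_\ep(\cdot,\alpha_2)\le N^-_\ep(\cdot,\alpha_1)$, so ED and UWD also hold at $(\alpha_1,\ep)$, whence $\rho(\alpha_1)\ge\rho(\alpha_2)$. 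Lower semicontinuity is drawn from robustness: for $\alpha_0\in\mI_0$ and any $\ep<\rho(\alpha_0)$, the pair $(\alpha_0,\ep)$ lies in the open subset of parameters on which ED and $M^+$ persist (Theorem~\ref{2.teorSSPT}), and since $H_3+\ep I_n>0$ property \hyperlink{p2}{\bf p2} once more promotes NC to UWD on a two-parameter neighborhood, so $\rho(\alpha)>\ep$ for $\alpha$ near $\alpha_0$; letting $\ep\uparrow\rho(\alpha_0)$ produces $\liminf_{\alpha\to\alpha_0}\rho(\alpha)\ge\rho(\alpha_0)$, with $\rho(\alpha_0)=\infty$ handled by letting $\ep\to\infty$ through admissible values. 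The principal obstacle is to coordinate the $\alpha$- and $\ep$-monotonicity comparisons with the $\ep\to 0^+$ limit in a way that is uniform on $\mI_0$; the joint use of \hyperlink{p2}{\bf p2}, \hyperlink{p3}{\bf p3}, \hyperlink{p5}{\bf p5}, and Theorem~\ref{2.teorSSPT} is exactly what makes this possible.
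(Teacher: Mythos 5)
The paper never actually proves this theorem: it is imported as part of Theorem 7.31 of \cite{jonnf}, with the remark that the proof given there works without minimality of $\W$ and without the extra no-D2 hypothesis. Your argument is therefore a self-contained substitute, and its scheme — treat \eqref{4.hamilperzle}$^\alpha_\ep$ with $H_3+\ep I_n>0$ as UWD regularizations, combine \hyperlink{p2}{\bf p2}, \hyperlink{p3}{\bf p3}, \hyperlink{p5}{\bf p5} with Theorem~\ref{2.teorSSPT}, and let $\ep\to 0^+$ — is exactly the machinery the paper itself uses in the proof of Theorem~\ref{4.teormejor}; your strictness argument in (i), differentiating $\xi^TD(t)\xi$ for $D=M^+(\cdot,\alpha_2)-M^+(\cdot,\alpha_1)$ at a putative zero of $D$ and using $\Delta>0$, is correct and pleasantly direct. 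Two smaller remarks on (i): your openness argument tacitly reads \eqref{4.defI0} as requiring ED and NC on the \emph{closed} interval between $0$ and $\alpha_0$ (equivalently, $\mI_0$ is the connected component of $0$ in the set of $\alpha$ with ED and NC); this is surely the intended reading (it is what makes ``$\mI_0\subseteq\mI$'' obvious in the proof of Theorem~\ref{4.teormejor}, and with the half-open intervals as printed $\mI_0$ would contain a finite right endpoint, contradicting openness and Theorem~\ref{4.teormejor}(iv)), but you should say so. Also, the application of \hyperlink{p5}{\bf p5} to $JH^{\alpha_1}_\ep\le JH^{\alpha_2}_\ep$ needs the family at $(\alpha_2,\ep)$ to be UWD; this follows from Theorem~\ref{2.teorSSPT}(ii) plus \hyperlink{p2}{\bf p2}, as you explain later in (ii), but it is used in (i) before it is justified.

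The one genuine gap is in (ii), in the claim that the admissible set of $\ep$ is \emph{exactly} $(0,\rho(\alpha))$, where $\rho(\alpha)$ is the supremum of the $\ep>0$ for which \eqref{4.hamilperzle}$^\alpha_\ep$ has ED and is UWD. Your $\ep$-direction comparison ($JH^\alpha_{\ep_1}\le JH^\alpha_{\ep_2}$, then \hyperlink{p5}{\bf p5} and \hyperlink{p3}{\bf p3}) only shows that this set is downward closed, hence an interval with left endpoint $0$; it does not exclude that the supremum is attained, i.e. that the set is $(0,\rho(\alpha)]$, and the stated ``if and only if'' requires precisely this exclusion. The patch is short and uses only tools you already deploy: if $\ep^*:=\rho(\alpha)<\infty$ were admissible, then ED, UWD and $H_3+\ep^* I_n\ge 0$ give, via \hyperlink{p3}{\bf p3}, the global existence of $M^+_{\ep^*}(\cdot,\alpha)$; Theorem~\ref{2.teorSSPT}(ii) then propagates ED and NC to slightly larger values of $\ep$, and \hyperlink{p2}{\bf p2} upgrades NC to UWD there (since $H_3+\ep I_n>0$), contradicting the definition of $\ep^*$ as a supremum. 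With this addition, and the presentational points above, your proof is complete and can stand in place of the citation to Theorem 7.31 of \cite{jonnf}.
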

The fact that the set
$\mI$ defined by \eqref{4.defI} agrees with the set $\mI_0$
defined by \eqref{4.defI0} will be fundamental in the proof of Theorem~\ref{4.teormejor}.
\par
The proof of the main result is quite long. It requires two auxiliary results.
One of then, Theorem \ref{4.teorizq}, is new and has independent interest.
The other one, Theorem \ref{4.teor5.61}, can be found in \cite{jonnf}.
\par From
this point we divide this section in two parts. The first one is devoted to
formulate these auxiliary results and prove the new one. And the proof of Theorem \ref{4.teormejor} is given in the second one.
\subsection{Auxiliary results}\label{sec4.1}
We point out that the results of this section do not require the existence of a $\sigma$-ergodic measure on $\W$ with full support.
\par
The fist auxiliary result is part
of Theorem 5.61 of \cite{jonnf}.
\begin{teor}\label{4.teor5.61}
Suppose that $H_3\ge 0$ and $\Delta>0$, and suppose that the set
\[
\mI_1=\{\alpha\in\R\,|\;\eqref{4.hamilperzl}^\alpha \text{ is UWD}\}
\]
is nonempty. Then
$\mI_1=(-\infty,\wit\alpha]$ for a point $\wit\alpha\in\R$. In addition,
for $\alpha<\wit\alpha$, the family \eqref{4.hamilperzl}$^\alpha$ has exponential dichotomy over $\W$
and the Weyl functions exist and agree with the principal functions;
but it does not have exponential dichotomy for~$\wit\alpha$.
\end{teor}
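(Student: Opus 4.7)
The argument revolves around the monotonicity identity
\[
JH^{\alpha_2}-JH^{\alpha_1}=\left[\begin{smallmatrix}(\alpha_2-\alpha_1)\Delta & 0\\ 0 & 0\end{smallmatrix}\right]\ge 0 \quad (\alpha_1\le\alpha_2),
\]
coming from $\Delta>0$, which feeds Sturm- and Riccati-type comparisons. First I would use property \hyperlink{p5}{\textbf{p5}} (valid in the case $H_3>0$) together with an $\ep$-regularization $H_3+\ep I_n>0$ and continuity of principal solutions as $\ep\to 0^+$ (or alternatively a direct Riccati comparison) to show that $\mI_1$ is downward closed and that whenever $\alpha<\beta$ both lie in $\mI_1$, $N^+_\alpha\le N^+_\beta\le N^-_\beta\le N^-_\alpha$. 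Set $\wit\alpha:=\sup\mI_1\in(-\infty,+\infty]$, so $(-\infty,\wit\alpha)\subseteq\mI_1$.

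\textbf{Strict monotonicity of $N^+$ and ED below $\wit\alpha$.} Property \hyperlink{p3}{\textbf{p3}} requires the strict inequality $N^+<N^-$, so the weak ordering above must be sharpened. For $\alpha<\beta$ in $\mI_1$ set $P(\w):=N^+_\beta(\w)-N^+_\alpha(\w)\ge 0$. Since $N^+_\alpha,N^+_\beta$ are globally defined Riccati solutions along the flow (Remark \ref{2.notasED}(d)), subtracting their Riccati equations \eqref{2.riccati} at any $\w_0$ with $P(\w_0)=0$ yields
\[
\frac{d}{dt}\bigg|_{t=0}P(\w_0\cdot t)=-(\beta-\alpha)\,\Delta(\w_0),
\]
which is negative definite; hence $P(\w_0\cdot t)$ would become negative definite for small $t>0$, contradicting $P\ge 0$. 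So $N^+_\alpha<N^+_\beta$ strictly. Given any $\alpha<\wit\alpha$, choose $\beta\in(\alpha,\wit\alpha)\cap\mI_1$; then $N^+_\alpha<N^+_\beta\le N^-_\beta\le N^-_\alpha$ gives $N^+_\alpha<N^-_\alpha$, and \hyperlink{p3}{\textbf{p3}} delivers ED for \eqref{4.hamilperzl}$^\alpha$ with $M^\pm_\alpha=N^\pm_\alpha$.

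\textbf{Boundedness and closure of $\mI_1$.} If $\mI_1=\R$, fix $\w_0$; the ordering above bounds $\{N^+_\alpha(\w_0)\}_{\alpha>0}$ above by $N^-_0(\w_0)$, so by strict monotonicity this family converges as $\alpha\to+\infty$. Integrating the Riccati equation for $N^+_\alpha$ along $\w_0\cdot t$ over a fixed interval $[0,t_1]$ and taking traces, however, shows that the contribution $-\alpha\int_0^{t_1}\tr(\Delta(\w_0\cdot s))\,ds\to-\infty$ dominates the bounded remainder, while the left-hand side stays bounded — contradiction. So $\wit\alpha<\infty$. For closedness, take $\alpha_k\uparrow\wit\alpha$; the monotone bounded limits $N^\pm_*:=\lim_k N^\pm_{\alpha_k}$ are globally defined Riccati solutions for $H^{\wit\alpha}$ satisfying the asymptotic property in Remark \ref{2.notasUWD}(b), hence they are the principal functions of $H^{\wit\alpha}$ and $\wit\alpha\in\mI_1$.

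\textbf{Failure of ED at $\wit\alpha$ — the main obstacle.} Assume for contradiction that $H^{\wit\alpha}$ has ED. Combined with UWD at $\wit\alpha$ and \hyperlink{p3}{\textbf{p3}}, this forces $N^+_{\wit\alpha}<N^-_{\wit\alpha}$, both equal to the Weyl functions $M^\pm_{\wit\alpha}$. Theorem \ref{2.teorSSPT} then produces ED with continuously depending Weyl functions $M^\pm_\alpha$ on some $(\wit\alpha-\delta_0,\wit\alpha+\delta_0)$, with $M^-_\alpha-M^+_\alpha$ staying uniformly positive. To contradict $\wit\alpha=\sup\mI_1$ I would show $\wit\alpha+\ep\in\mI_1$ for some small $\ep>0$. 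Replacing $H_3$ by $H_3+\delta I_n>0$ and invoking property \hyperlink{p2}{\textbf{p2}} at $\alpha=\wit\alpha+\ep$ (where ED and $M^+$ persist by \ref{2.teorSSPT}) forces UWD for the regularized family $H^{\wit\alpha+\ep}_{(\delta)}$ for every small $\delta>0$. The crucial and hardest step is the limit $\delta\to 0^+$: one must transfer UWD to $H^{\wit\alpha+\ep}$ itself by showing that the UWD constants $t_0(\delta)$ remain uniformly bounded, which I would derive from the uniform gap $M^-_{(\delta),\alpha}-M^+_{(\delta),\alpha}>c>0$ inherited via Sacker-Sell continuity in $\delta$. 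Making this limiting step rigorous is what I anticipate as the main technical obstacle.
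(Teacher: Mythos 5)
The paper does not prove this result itself: it is cited as part of Theorem 5.61 of~\cite{jonnf}, so there is no internal proof to compare against. Assessed on its own merits, your skeleton (downward closure by comparison, strict monotonicity via the Riccati difference, boundedness by traces, a robustness contradiction at the endpoint) is in the right spirit, but it contains genuine gaps. The closedness step asserts that the monotone limits $N^\pm_*$ satisfy the integral criterion of Remark~\ref{2.notasUWD}(b) and hence are principal functions for $H^{\wit\alpha}$; this is not verified and is not automatic — indeed, to even speak of principal functions at $\wit\alpha$ one must first establish UWD there, and a continuous globally defined Riccati solution along the flow does not by itself yield UWD when only $H_3\ge 0$. (There is also a smaller slip in the strict-monotonicity argument: failure of $N^+_\alpha<N^+_\beta$ produces $\w_0$ and $\bv\ne\bcero$ with $\bv^TP(\w_0)\bv=0$, not $P(\w_0)=0_n$; this is repairable since $P\ge0$ then gives $P(\w_0)\bv=\bcero$ and the scalar $g(t)=\bv^TP(\w_0{\cdot}t)\bv\ge 0$ has a minimum at $t=0$ while your computation gives $g'(0)<0$, but the case you actually treat is not the general one.) Most importantly, you identify the $\delta\to 0^+$ limit in the final step as ``the main technical obstacle'' and do not carry it out, so the proof that \eqref{4.hamilperzl}$^{\wit\alpha}$ fails to have ED — the most substantial claim in the statement — is left open.

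For that last step the regularization can in fact be bypassed. If ED held at $\wit\alpha$, then by UWD there and \hyperlink{p3}{\bf p3} both $M^\pm_{\wit\alpha}=N^\pm_{\wit\alpha}$ globally exist, and Theorem~\ref{2.teorSSPT} yields ED together with globally defined, continuous $M^\pm_\alpha$ for all $\alpha$ in a neighborhood of $\wit\alpha$. But ED together with global existence of \emph{both} Weyl functions already implies UWD directly, with a $t_0$ controlled by the dichotomy constants $\eta,\beta$, by $\sup_\W\n{M^-}$, and by the uniformly positive (via compactness of $\W$ and global existence of $M^+$) angle between $l^+(\w)$ and the vertical Lagrange plane $\{\lsm\bcero\\\bz_2\rsm\}$: decompose any nonzero solution with $\bz_1(0)=\bcero$ along $l^\pm(\w)$; the $l^-$-component is necessarily nonzero and, because $l^-(\w{\cdot}t)$ is a graph over the first block, its first coordinate grows like $e^{\beta t}$, while the $l^+$-first coordinate decays like $e^{-\beta t}$, so $\bz_1(t)\ne\bcero$ for all $t$ beyond a uniform $t_0$. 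Hence $\wit\alpha+\ep\in\mI_1$ for small $\ep>0$, the desired contradiction, with no limit in $\delta$ at all.
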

In order to formulate the second auxiliary result, Theorem~\ref{4.teorizq},
we need to
introduce two new sets of families of linear Hamiltonian systems:
\begin{equation}\label{4.hamilperwl}
 \bw'=\left[\begin{array}{cc} -H_1^T(\wt)&
  H_2(\wt)-\lambda\Delta(\wt)\\H_3(\wt)&H_1(\wt)\end{array}\right]\bw
\end{equation}
for $\lambda\in\C$, and
\begin{equation}\label{4.hamilperwle}
 \bw'=\left[\begin{array}{cc} -H_1^T(\wt)&
  H_2(\wt)-\lambda\Delta(\wt)\\H_3(\wt)+\ep I_n&H_1(\wt)\end{array}\right]\bw
\end{equation}
for $\lambda\in\C$ and $\ep\in\R$.
We represent by
\eqref{4.hamilperwl}$^\lambda$ and \eqref{4.hamilperwle}$_\ep^\lambda$
the families
corresponding to a particular (real or complex) value of $\lambda$.
Again, \eqref{4.hamilperwle}$_0^\lambda$ agrees with
\eqref{4.hamilperwl}$^\lambda$.
In all the cases,
we will often substitute $\lambda$ by $\alpha$ if the parameter is real
\par
Before stating Theorem~\ref{4.teorizq}, we will derive
some facts from the relation of \eqref{4.hamilperwle}$_\ep^\lambda$ and
\eqref{4.hamilperwl}$^\lambda$
with \eqref{4.hamilperzle}$_\ep^\lambda$ and
\eqref{4.hamilperzl}$^\lambda$, and we will also establish some notation
which we will use below.
\begin{nota}\label{4.inver}
(a)~As in the proof of Corollary \ref{3.coro2}, it is easy to check that
the change of variables $\bw=\lsm 0_n&I_n\\I_n&0_n\rsm\bz$ takes
\eqref{4.hamilperzl}$^\lambda$ to \eqref{4.hamilperwl}$^\lambda$ and
\eqref{4.hamilperzle}$_\ep^\lambda$ to \eqref{4.hamilperwle}$_\ep^\lambda$;
hence, according to Remark~\ref{2.notasED}(b), the ED of
\eqref{4.hamilperzle}$_\ep^\lambda$ is equivalent to the ED of
\eqref{4.hamilperwle}$_\ep^\lambda$. Moreover, it follows
from \eqref{2.deflpm} that, in the case of ED,
$\lsm \bz_1\\\bz_2\rsm\in l^\pm_\ep(\w,\lambda)$
if and only if $\lsm\bz_2\\\bz_1\rsm\in\wit l^\pm_\ep(\w,\lambda)$,
where $l^\pm_\ep(\w,\lambda)$ and $\wit l^\pm_\ep(\w,\lambda)$ are
the Lagrange planes defined by~\eqref{2.defl} from
the families \eqref{4.hamilperzle}$_\ep^\lambda$ and
\eqref{4.hamilperwle}$_\ep^\lambda$.
And of course, the same happens with
\eqref{4.hamilperzl}$^\lambda$ and \eqref{4.hamilperwl}$^\lambda$.
\par
(b)~In the case of ED and of global
existence of the Weyl functions (just one or both of them)
for \eqref{4.hamilperzle}$_\ep^\lambda$ (or for \eqref{4.hamilperwle}$_\ep^\lambda$)
for $\lambda\in\C$ and $\ep\in\R$, we will represent them by
$M_\ep^\pm(\w,\lambda)$ (or by $\wit M_\ep^\pm(\w,\lambda)$). In particular,
$M_0^\pm(\w,\lambda)$ (or $\wit M_0^\pm(\w,\lambda)$) represent
the Weyl functions of \eqref{4.hamilperzl}$^\lambda$
(or of \eqref{4.hamilperwl}$^\lambda$), for which we will omit the
subscript: $M^\pm(\w,\lambda)$ (or $\wit M^\pm(\w,\lambda)$).
\par
(c)~Similarly, in the case of UWD of the family
\eqref{4.hamilperzle}$_\ep^\alpha$ (or of \eqref{4.hamilperwle}$_\ep^\alpha$)
for $\alpha\in\R$ and $\ep\in\R$, we will represent the corresponding principal
functions by
$N_\ep^\pm(\w,\alpha)$ (or by $\wit N_\ep^\pm(\w,\alpha)$); and we will denote
$N^\pm(\w,\alpha):=N_0^\pm(\w,\alpha)$ (and
$\wit N^\pm(\w,\alpha):=\wit N_0^\pm(\w,\alpha)$).
\par
(d)~Suppose that both $M_\ep^+(\w,\lambda)$ and $\wit M_\ep^+(\w,\lambda)$
exist. The relation between the solutions of
\eqref{4.hamilperzle}$_\ep^\lambda$ and \eqref{4.hamilperwle}$_\ep^\lambda$
show that $\bz=\lsm\bz_1\\\bz_2\rsm$ belongs to
the Lagrange plane given by $\lsm I_n\\ M_\ep^+(\w,\alpha)\rsm$ if and only if
$\bw=\lsm\bz_2\\\bz_1\rsm$ belongs to that given by
$\lsm M_\ep^+(\w,\alpha)\\I_n\rsm$, which agrees with that given by
$\lsm I_n\\ (M_\ep^+)^{-1}(\w,\alpha)\rsm$. This fact combined with
\eqref{2.deflpm} guarantees that
$\wit M_\ep^+(\w,\lambda)=(M_\ep^+)^{-1}(\w,\lambda)$. The same
property holds of course in the case of $M_\ep^-(\w,\lambda)$ and
$\wit M_\ep^-(\w,\lambda)$.
\par
(e)~For the same reason, suppose that \eqref{4.hamilperwle}$_\ep^\lambda$ has
ED, and that $\wit M_\ep^+(\w,\lambda)$ exists and
is nonsingular for all $\w\in\W$. Then $M_\ep^+(\w,\lambda)$ exists and
$M_\ep^+(\w,\lambda)=(\wit M_\ep^+)^{-1}(\w,\lambda)$. And the analogous
property holds in the case of $\wit M_\ep^-(\w,\lambda)$ and
$M_\ep^-(\w,\lambda)$.
\end{nota}
\begin{teor}\label{4.teorizq}
Suppose that Hypothesis \ref{4.hipos} holds, and let
$\alpha_0$ be a real value with $H_2-\alpha_0\Delta>0$.
Then, if $\alpha\le\alpha_0$, the family
\eqref{4.hamilperwl}$^\alpha$
is UWD and has ED, and the Weyl functions $\wit M^\pm(\w,\alpha)$
globally exist; and, in addition
\[
 \wit M^+(\w,\alpha_1)\le\wit  M^+(\w,\alpha_2)<0\le
 \wit M^-(\w,\alpha_2)\le\wit M^-(\w,\alpha_1)
\]
for every $\w\in\W$ if $\alpha_2<\alpha_1\le\alpha_0$.
\par
Consequently, if $\alpha\le\alpha_0$, the family
\eqref{4.hamilperzl}$^\alpha$
has ED, and the Weyl function $M^+(\w,\alpha)$
globally exists; and, in addition
\[
 M^+(\w,\alpha_2)\le M^+(\w,\alpha_1)<0
\]
for every $\w\in\W$ if $\alpha_2<\alpha_1\le\alpha_0$.
\end{teor}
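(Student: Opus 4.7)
The plan is to prove the statements for \eqref{4.hamilperwl}$^\alpha$ first and deduce the \eqref{4.hamilperzl}$^\alpha$ part by transfer via Remark~\ref{4.inver}. Writing the coefficient of \eqref{4.hamilperwl}$^\alpha$ as $\wit H^\alpha=\lsm -H_1^T&H_2-\alpha\Delta\\H_3&H_1\rsm$, the assumption $H_2-\alpha_0\Delta>0$ together with $\Delta>0$ and $\alpha\le\alpha_0$ yields $\wit H^\alpha_3=H_2-\alpha\Delta\ge H_2-\alpha_0\Delta>0$, while $\wit H^\alpha_2=H_3\ge 0$ is part of Hypothesis~\ref{4.hipos}. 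Property~\hyperlink{p1}{\bf p1} therefore gives UWD of \eqref{4.hamilperwl}$^\alpha$ for every such $\alpha$, and the asserted monotonicity of the principal functions in $\alpha$ follows from~\hyperlink{p5}{\bf p5}: for $\alpha_2<\alpha_1\le\alpha_0$ one computes $J(\wit H^{\alpha_2}-\wit H^{\alpha_1})=\lsm 0_n&0_n\\0_n&(\alpha_1-\alpha_2)\Delta\rsm\ge 0$, so that $\wit N^+(\w,\alpha_1)\le\wit N^+(\w,\alpha_2)\le\wit N^-(\w,\alpha_2)\le\wit N^-(\w,\alpha_1)$.

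For the exponential dichotomy and the global existence of the Weyl functions I would introduce the $\ep$-regularization \eqref{4.hamilperwle}$_\ep^\alpha$, obtained by replacing $H_3$ with $H_3+\ep I_n$. For $\ep>0$ and $\alpha\le\alpha_0$ both blocks $\wit H_3=H_2-\alpha\Delta$ and $\wit H_{2,\ep}=H_3+\ep I_n$ are strictly positive definite, so Lemma~\ref{2.lemaRic}(i) applies and yields UWD, ED, $\wit M_\ep^\pm=\wit N_\ep^\pm$, and the strict bounds $\wit M_\ep^+(\w,\alpha)<0<\wit M_\ep^-(\w,\alpha)$. A second invocation of~\hyperlink{p5}{\bf p5}, now in the $\ep$-direction (using $J(\wit H_\ep-\wit H_{\ep'})=\lsm -(\ep-\ep')I_n&0_n\\0_n&0_n\rsm$), gives that $\wit N_\ep^+(\w,\alpha)$ is nonincreasing and $\wit N_\ep^-(\w,\alpha)$ is nondecreasing in $\ep$, so monotone pointwise limits $\wit\phi^\pm(\w,\alpha):=\lim_{\ep\downarrow 0}\wit N_\ep^\pm(\w,\alpha)$ exist and satisfy $\wit\phi^+(\w,\alpha)\le 0\le\wit\phi^-(\w,\alpha)$. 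A standard continuous-dependence argument together with the compactness of $\W$ identifies $\wit\phi^\pm$ as continuous globally defined solutions of the Riccati equation associated with \eqref{4.hamilperwl}$^\alpha$; property~\hyperlink{p4}{\bf p4} then traps them inside the principal functions of the first step, $\wit N^+(\w,\alpha)\le\wit\phi^\pm(\w,\alpha)\le\wit N^-(\w,\alpha)$.

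The main obstacle is to promote the weak inequalities $\wit\phi^+(\w,\alpha)\le 0\le\wit\phi^-(\w,\alpha)$ to the strict ones $\wit\phi^+(\w,\alpha)<0<\wit\phi^-(\w,\alpha)$, since coupled with the sandwich above this forces $\wit N^+<\wit N^-$ and hence, by~\hyperlink{p3}{\bf p3}, the ED of \eqref{4.hamilperwl}$^\alpha$ together with the identifications $\wit M^\pm=\wit N^\pm=\wit\phi^\pm$. My plan for strictness is to revisit the proof of Lemma~\ref{2.lemaRic}(ii) inside the $\ep$-approximation and extract a quantitative lower bound $\wit M_\ep^-(\w,\alpha)\ge\kappa I_n$ with a constant $\kappa>0$ independent of $\ep$; the key observation is that the strict positivity driving that argument can be read off from $\wit H_3=H_2-\alpha\Delta\ge H_2-\alpha_0\Delta>0$, which is $\ep$-independent, so the constant survives the limit, and a symmetric argument produces $\wit M_\ep^+(\w,\alpha)\le-\kappa' I_n$ on the negative side. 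Once ED and $\wit M^+(\w,\alpha)<0$ are secured, the \eqref{4.hamilperzl}-part follows by bookkeeping from Remark~\ref{4.inver}: ED transfers by~(a), $M^+(\w,\alpha)=(\wit M^+(\w,\alpha))^{-1}$ globally exists and is negative definite by~(e), and matrix inversion reverses the order on the cone of negative definite symmetric matrices, converting $\wit M^+(\w,\alpha_1)\le\wit M^+(\w,\alpha_2)<0$ into $M^+(\w,\alpha_2)\le M^+(\w,\alpha_1)<0$.
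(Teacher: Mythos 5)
Your setup is correct and matches the opening of the paper's proof: UWD of \eqref{4.hamilperwl}$^\alpha$ via \hyperlink{p1}{\bf p1}, monotonicity of the principal functions via \hyperlink{p5}{\bf p5}, the $\ep$-regularization \eqref{4.hamilperwle}$_\ep^\alpha$ to which Lemma~\ref{2.lemaRic}(i) applies, trapping via \hyperlink{p4}{\bf p4}, and the reduction of the whole statement to showing $\wit N^+(\w,\alpha)<0\le\wit N^-(\w,\alpha)$. The transfer to the $\bz$-family via Remark~\ref{4.inver} at the end is also as in the paper.

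However, the crucial strictness step is wrong, and the specific claim you make there does not hold. You assert that the strict positivity driving Lemma~\ref{2.lemaRic}(ii) is read off from the block $\wit H_3=H_2-\alpha\Delta>0$, which is $\ep$-independent, so the constant survives the limit $\ep\downarrow 0$. But the positivity that actually drives that lemma's proof is that of the \emph{$H_2$-block}: the argument there evaluates the Riccati right-hand side $h(\w,0_n)=H_2(\w)$ at $M=0_n$ and uses $H_2>\ep I_n$ to push $M(t_0,\w,0_n)$ strictly away from zero. For the $\bw$-system \eqref{4.hamilperwle}$_\ep^\alpha$, the $H_2$-block is $H_3+\ep I_n$, and $H_3$ is only assumed nonnegative, so this quantity degenerates as $\ep\to 0$; the constant $\kappa$ you would extract by that route tends to $0$ with $\ep$. (Explicit scalar autonomous examples, e.g. $H_1\equiv -1$, $H_2\equiv 0$, $H_3\equiv 0$, $\Delta\equiv 1$, $\alpha=-1$, already exhibit $\wit M_\ep^-(\w,\alpha)=-1+\sqrt{1+\ep}\to 0$.) More fundamentally, nothing in your strictness argument uses the nonoscillation part of Hypothesis~\ref{4.hipos}, i.e.~that $M^+(\w,0)$ globally exists; yet the strict inequality $\wit M^+(\w,\alpha)<0$ is precisely the nonsingularity of $\wit M^+=(M^+)^{-1}$, which is information that must be propagated from $\alpha=0$ (where the NC is assumed) to $\alpha\le\alpha_0$. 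The paper's proof does exactly this, via the Herglotz representation and Stieltjes inversion to show $\Ima\wit M^-(\w,\alpha+i\beta)\to 0$ on $(-\infty,\wit\alpha_0)$, the Schwarz reflection principle to extend $\wit M^\pm$ across the real axis, and the principle of isolated zeroes to propagate any degeneracy $\bv^T\wit N^+(\wit\w,\wit\alpha)\bv=0$ to all of $\C^+$; this is then contradicted at some $\alpha_2\in\mI$, where Hypothesis~\ref{4.hipos} ensures $M^+(\wit\w,\alpha_2)$ exists and hence $\wit F^+(\wit\w,\alpha_2)=(M^+)^{-1}(\wit\w,\alpha_2)$ is nonsingular. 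Your proposal is missing this entire analytic-continuation mechanism, and the local quantitative bound you propose in its place does not exist.
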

\begin{proof}
Throughout the proof, we will use the notation established in
Remarks \ref{4.inver}(b)\&(c).
\par
We have $H_2-\alpha\Delta>0$ for $\alpha<\alpha_0$ and $H_3\ge 0$,
so that the family \eqref{4.hamilperwl}$^\alpha$ is UWD for
$\alpha<\alpha_0$: see property \hyperlink{p1}{\bf p1} of Section \ref{sec2}.
In addition $H_2-\alpha\Delta$ decreases as $\alpha$ increases, so that,
according to property \hyperlink{p5}{\bf p5},
the corresponding principal functions
$\wit N^\pm(\w,\alpha)$ satisfy
\begin{equation}\label{4.ordenN}
\wit N^+(\w,\alpha_1)\le \wit N^+(\w,\alpha_2)\le
\wit N^-(\w,\alpha_2)\le \wit N^-(\w,\alpha_1)
\quad\text{if $\alpha_2<\alpha_1\le\alpha_0$}\,.
\end{equation}
Let us check that the proof of
the theorem will be completed once we have proved that
the family \eqref{4.hamilperwl}$^\alpha$ satisfies
\[
 \wit N^+(\w,\alpha)<0\le \wit N^-(\w,\alpha)
 \quad\text{if $\alpha<\alpha_0$ for all $\w\in\W$}\,.
\]
If this is the case,
property \hyperlink{p3}{\bf p3} guarantees that
the family \eqref{4.hamilperwl}$^\alpha$
has ED and admits Weyl functions
with $\wit M^\pm(\w,\alpha)=\wit N^\pm(\w,\alpha)$. Therefore,
$\wit M^+(\w,\alpha)<0\le \wit M^-(\w,\alpha)$ for all $\w\in\W$
if $\alpha<\alpha_0$; and these facts,
\eqref{4.ordenN}, and Remarks~\ref{4.inver}(a)\&(e) prove all the assertions
of the theorem. (Note that $M_1\le M_2<0$ ensure that there exist
$M_1^{-1}$ and $M_2^{-1}$ with $M_2^{-1}\le M_1^{-1}$).
\par From this point, and for the safe of clarity, we divide
the proof in three steps.
\smallskip\par
{\bf First step.} We will start by proving the next statements:
\begin{itemize}
\item[\hypertarget{1}\text{\bf s1.}] For any $\lambda\in\C-\R$
there globally exist the Weyl functions $\wit M^\pm(\w,\lambda)$.
In addition, the maps
$M^\pm\colon\W\times(\C-\R)\to\s_n(\C),\;(\w,\lambda)\mapsto M^\pm(\w,\lambda)$
are jointly continuous, and they
are holomorphic on $\C-\R$ for each $\w\in\W$ fixed.
In particular, they are Herglotz functions on $\C^+$ and $\C^-$.
\item[\hypertarget{2}\text{\bf s2.}] Moreover,
$\mp\Rea\wit M^\pm(\w,\lambda)\ge 0$ and
$\mp\Ima\wit M^\pm(\w,\lambda)\ge 0$
whenever $\Rea\lambda\le\alpha_0$ and $\Ima\lambda>0$.
\end{itemize}
\par
Let us take $\ep>0$ and $\alpha\in(-\infty,\alpha_0]$.
Since $H_2-\alpha\Delta>0$ and $H_3+\ep I_n>0$,
Lemma \ref{2.lemaRic}(i) ensures that
the family \eqref{4.hamilperzle}$^\alpha_\ep$ has ED and that
the corresponding (real) Weyl functions $M^\pm_\ep(\w,\alpha)$
exist and satisfy
\begin{equation}\label{4.a1}
 \mp M_\ep^\pm(\w,\alpha)>0 \quad
 \text{for all $\,\w\in\W\,$ if $\,\ep>0\,$ and $\,\alpha\le\alpha_0$}\,.
\end{equation}
Therefore, if $\ep>0$ and $\alpha\le\alpha_0$, then
the family \eqref{4.hamilperwle}$_\ep^\alpha$
has ED and there exist the
Weyl functions $\wit M_\ep^\pm(\w,\alpha)$ for
\eqref{4.hamilperzl}$_\ep^\alpha$, with $\mp\wit M_\ep^\pm(\w,\alpha)=
\mp(M_\ep^\pm)^{-1}(\w,\alpha)$: see Remarks~\ref{4.inver}(a)\&(e).
In particular, it follows from
\eqref{4.a1} that
\begin{equation}\label{4.a2}
 \mp\wit M_\ep^\pm(\w,\alpha)>0 \quad
 \text{for all $\,\w\in\W\,$ if $\,\ep>0\,$ and $\,\alpha\le\alpha_0$}\,.
\end{equation}
\par
Hypothesis~\ref{4.hipos} and Remark~\ref{4.inver}(a)
ensure that the family \eqref{4.hamilperwle}$_0^0$ has
ED. Hence there exists $\ep_0$
such that \eqref{4.hamilperwle}$_\ep^0$ has ED for
$\ep\in[0,\ep_0)$: see Theorem \ref{2.teorSSPT}.
We deduce from Theorem~\ref{3.teornoatki}(i)\&(ii) the next properties:
if $\ep\in[0,\ep_0)$, $\alpha\in\R$ and $\beta>0$,
then the family \eqref{4.hamilperwle}$_\ep^{\alpha+i\beta}$
has ED and there globally exist the Weyl
functions $\wit M^\pm_\ep(\w,\alpha+i\beta)$.
Note that $\wit M^\pm_0(\w,\alpha+i\beta)=\wit M^\pm(\w,\alpha+i\beta)$.
The information provided in Theorem~\ref{3.teornoatki}(ii) concerning
continuity and analyticity completes the proof of property \hyperlink{1}{\bf s1}.
In addition, also according to Theorem~\ref{3.teornoatki}(ii),
\begin{equation}\label{4.a3}
\mp\Ima\wit M^\pm_\ep(\w,\alpha+i\beta)\ge 0
\quad\text{if $\ep\in[0,\ep_0)$, $\alpha\in\R$ and $\beta>0$}\,.
\end{equation}
Moreover, we can ensure that
\begin{equation}\label{4.a5}
 \mp\Rea \wit M^\pm_\ep(\w,\alpha+i\beta)>0\quad
 \text{for all $\w\in\W\,$
 if $\ep\in(0,\ep_0)$, $\alpha\le\alpha_0$ and $\beta>0$}\,.
\end{equation}
In order to prove this last assertion,
we fix $\ep\in(0,\ep_0)$ and $\alpha\le\alpha_0$,
and use Theorem \ref{2.teorSSPT}
to ensure that
\begin{equation}\label{4.a4}
 \lim_{\beta\to 0^+}\wit M^\pm_\ep(\w,\alpha+i\beta)=
 \wit M^\pm_\ep(\w,\alpha) \quad\text{uniformly~in $\,\w\in\W$}\,.
\end{equation}
Therefore, it follows from
\eqref{4.a2} that there exists $\beta_0=\beta(\ep,\alpha)$ such that
\[
 \mp\Rea\wit M^\pm_\ep(\w,\alpha+i\beta)>0
 \quad \text{for all $\,\w\in\W\,$ if $\,\beta\in[0,\beta_0)$}\,.
\]
Let us work now with $\wit M^-$, assuming for contradiction that the
value
\[
 \beta^*=\beta^*(\ep,\alpha):=\sup\{\beta_0>0\,|\;
 \Rea\wit M^-_\ep(\w,\alpha+i\beta)>0\;\text{for all $\w\in\W$
 if $\beta\in[0,\beta_0)$}\}
\]
is finite. We denote $R^\pm(\w):=\Rea \wit M_\ep^\pm(\w,\alpha+i\beta^*)$
and $I^\pm(\w):=\Ima \wit M_\ep^\pm(\w,\alpha+i\beta^*)$. A straightforward
computation from the Riccati equation (see \eqref{2.riccati}) associated to
\eqref{4.hamilperwle}$_\ep^{\alpha+i\beta^*}$ shows that, for all $\w\in\W$,
the maps $t\mapsto R^\pm(\wt)$ are solutions of the
Riccati equation associated to the (real) Hamiltonian system
\[
 \bu'=\left[\begin{array}{cc} -H_1^T+\beta\Delta I^\pm&
  H_2-\alpha\Delta\\
  I^\pm(H_2-\alpha\Delta)\,I^\pm+H_3+\ep I_n&
  H_1-\beta I^\pm\Delta\end{array}\right]\bu
\]
(where $H_1,\,H_2,\,H_3,\,\Delta$ and $I^\pm$ have argument $\wt$).
We know also that they are globally defined and that $R^-(\w)\ge 0$.
Since $H_2-\alpha\Delta>0$ and $H_2-\alpha\Delta+H_3+\ep I_n>0$,
Lemma \ref{2.lemaRic}(ii) shows that $R^-(\w)>0$, which contradicts
the definition of $\beta^*$.
So, \eqref{4.a5} is proved for $\wit M^-$, and the proof for $\wit M^+$ is
analogous.
\par
Finally, we can also deduce from Theorem \ref{2.teorSSPT} that
\begin{equation}\label{4.a6}
\begin{split}
 &\lim_{\ep\to 0^+}\wit M^\pm_\ep(\w,\lambda)=
 \wit M^\pm(\w,\lambda)\quad\text{uniformly}\\
 &\qquad\quad\text{in the compact sets
 of $\,\W\times\C^+\,$ and $\,\W\times\C^-$}\,,
\end{split}
\end{equation}
which together with \eqref{4.a3} and \eqref{4.a5} proves
property \hyperlink{2}{\bf s2}. This completes the first step.
\smallskip\par
{\bf Second step.} We fix $\w\in\W$ and prove the next assertions.
\begin{itemize}
\item[\hypertarget{3}\text{\bf s3.}]
Let $\wit\alpha_0>0$ satisfy
$H_2-\wit\alpha_0\Delta>0$. Then
$\lim_{\beta\to 0^+} \Ima\wit M^-(\w,\alpha+i\beta)=0_n$
uniformly on the compact subsets of $(-\infty,\wit\alpha_0)$.
\item[\hypertarget{4}\text{\bf s4.}] In addition, there exist the limits
$\wit F^\pm(\w,\alpha):=\lim_{\beta\to 0^+}\wit M^\pm(\w,\alpha+i\beta)$
for all $\alpha\le\alpha_0$, and they are real matrices
with $\mp \wit F^\pm(\w,\alpha)\ge 0$. Moreover,
the matrix-valued functions $t\mapsto \wit F^\pm(\wt,\alpha)$
are two globally defined solutions of the Riccati
equation associated to \eqref{4.hamilperwl}$^\alpha$.
\end{itemize}
\par
Note that, in the first step,
nothing precludes us from substituting $\alpha_0$ by a slightly
greater $\wit\alpha_0$ for which $H_2-\wit\alpha_0\Delta>0$:
Properties \hyperlink{2}{\bf s2\,} and \eqref{4.a1}
are true for this $\wit\alpha_0$. We will use both of them.
Let $\ep_0$ be the real number defined
at the beginning of the first step.
As seen in \eqref{4.a3}, the holomorphic maps $\C^+\to\s_n(\C),\;
\lambda\mapsto \wit M_\ep^-(\w,\lambda)$
are Herglotz for $\ep\in[0,\ep_0)$ (see Definition~\ref{3.defherg}). Therefore,
Theorem~\ref{4.teorherg}(ii) (see below) provides the representation
\[
 \wit M_\ep^-(\w,\lambda)=L_\ep+K_\ep\,\lambda+
 \int_\R\left(\frac{1}{t-\lambda}-\frac{t}{t^2+1}\right)dP_\ep(t)
\]
for $\Ima\lambda>0$ and $\ep\in[0,\ep_0)$. In the case $\ep=0$
we rewrite this as
\[
 \wit M^-(\w,\lambda)=L+K\,\lambda+\int_\R\left(\frac{1}{t-\lambda}-
 \frac{t}{t^2+1}\right)dP(t)\,.
\]
Let us take $\ep\in(0,\ep_0)$.
Theorem~\ref{4.teorherg}(iii) can be combined with the property
\eqref{4.a4} and with the real character of
$\wit M_\ep^-(\w,\alpha)$ for $\alpha\le\wit\alpha_0$
(see \eqref{4.a1}) in order to see that
\[
 \frac{1}{2}\,(P_\ep\{\alpha_1\}
 +P_\ep\{\alpha_2\})+\int_{(\alpha_1,\alpha_2)}dP_\ep(t)=
 \frac{1}{\pi}\,\int_{\alpha_1}^{\alpha_2}
 \Ima\wit M_\ep^-(\wD,\alpha)\,d\alpha=0_n
\]
whenever $\alpha_1<\alpha_2\le\wit\alpha_0$. This ensures that
\begin{equation}\label{4.b3}
 \int_{(-\infty,\wit\alpha_0)} dP_\ep(t)=0_n \quad\text{if $\ep\in(0,\ep_0)$}\,.
\end{equation}
Now we take a sequence $(\ep_m)\downarrow 0$ and recall that
$\lim_{m\to\infty} \wit M_{\ep_m}^-(\w,\lambda)=\wit M^-(\w,\lambda)$
uniformly on the compact sets of $\C^+$ (see \eqref{4.a6}). Therefore
the sequence $(dP_{\ep_m})$ converges to $dP$ in the weak$^*$ sense
(see Theorem \ref{4.teorP} below), which together with \eqref{4.b3}
allows us to check that
%$P\{\alpha_1\}=0_n$ and
%$\int_{(\alpha_1,\alpha_2)} dP(t)=0_n$
%if $\alpha_1<\alpha_2<\alpha_0$. In particular,
\begin{equation}\label{4.b44}
 \int_{(-\infty,\wit\alpha_0)}dP(t)=0_n\,.
% \quad\text{whenever $\alpha<\alpha_0$}\,.
\end{equation}
Let us take $\delta>0$ and $\alpha_1<\wit\alpha_0-\delta$
and denote $\mC^\delta_{\alpha_1}:=
\{\lambda\in\C\,|\;\Rea\lambda\in[\alpha_1,\wit\alpha_0-\delta]\linebreak
\text{and }\Ima\lambda\in(0,1]\}$.
Note that there exist constants $c_1>0$ and $c_2>c_1$ such that
\begin{equation}\label{4.b45}
 c_1\le\Frac{|t-\lambda|^2}{t^2+1}\le c_2
 \quad\text{for $\;t\in[\wit\alpha_0,\infty)\;$ and
 $\;\lambda\in\mC^\delta_{\alpha_1}$}\,,
\end{equation}
since the function is continuous, takes strictly
positive values (due to $|t-\lambda|^2\ge \delta^2$),
and $\lim_{t\to\infty}|t-\lambda|^2/(t^2+1)=1$.
Then, if $\lambda=\alpha+i\beta\in \mC^\delta_{\alpha_1}$,
\[
\begin{split}
 &\frac{1}{\beta}\,\Ima\wit M^-(\w,\alpha+i\beta)=
 K+\int_\R\frac{1}{|t-\lambda|^2}\:dP(t)\\
 &\qquad\qquad
 =K+\int_{[\alpha_0,\infty)}\frac{1}{|t-\lambda|^2}\:dP(t)
 \le K+\frac{1}{c_1}\int_{[\alpha_0,\infty)}\frac{1}{t^2+1}\:dP(t)\\
 &\qquad\qquad
 \le K+\frac{1}{c_1}\int_{\R}\frac{1}{t^2+1}\:dP(t)\le
 K+\frac{1}{c_1}\,\Ima\wit M^-(\w,i)\,.
\end{split}
\]
Here we have used Theorem \eqref{4.teorherg}(ii) at the
first and last steps, and \eqref{4.b44} and \eqref{4.b45} at the
second and third steps. This and property \hyperlink{2}{\bf s2} yield
\[
 0_n\le
 \Ima\wit M^-(\w,\alpha+i\beta)\le\beta\left(K+\frac{1}{c_1}\,\Ima\wit M^-(\w,i)
 \right)
\]
if $\alpha\in[\alpha_1,\wit\alpha_0-\delta]$ and $\beta\in(0,1]$.
Property \hyperlink{3}{\bf s3} follows easily from here.
\par
In turn, property \hyperlink{3}{\bf s3\,} guarantees that
whenever a sequence $(\lambda_m)$ in $\C^+$ converges to
$\alpha\in(-\infty,\wit\alpha_0)$, it is
$\lim_{m\to\infty} \Ima\wit M^-(\w,\lambda_m)=0_n$. This fact
allows us to apply the Schwarz
reflection principle (see e.g.~\cite{rudi}, Theorem 11.14) in order
to ensure that $\wit M^-(\w,\lambda)$ admits a holomorphic extension
to $\C-(\alpha_0,\infty)$ (which is contained in
$\C-[\wit\alpha_0,\infty)$) with null imaginary part
for $\lambda=\alpha\in(-\infty,\alpha_0]$.
We call $\wit F^-(\w,\alpha)$ to the restriction of
this extension to $(-\infty,\alpha_0]$. Note that this process
can be performed for all $\w\in\W$.
In particular,
\[
 \wit F(\wt,\alpha)=\lim_{\beta\to 0^+}\wit M^-(\wt,\alpha+i\beta)
\]
for all $t\in\R$. Since $t\mapsto\wit M^-(\wt,\alpha+i\beta)$ solves the
Riccati equation associated to \eqref{4.hamilperwl}$^{\alpha+i\beta}$,
we conclude that $t\mapsto \wit F^-(\wt,\alpha)$ solves the Riccati
equation associated to \eqref{4.hamilperwl}$^\alpha$.
Finally, it follows from
\hyperlink{2}{\bf s2\,} that $\wit F^-(\w,\alpha)\ge 0$. Hence,
property \hyperlink{4}{\bf s4\,} is proved.
\par
The proofs of \hyperlink{3}{\bf s3\,}\ and \hyperlink{4}{\bf s4\,}
are analogous in the case of $\wit M^+$: the second step is complete.
\smallskip\par
{\bf Third step.} By combining the previous property \hyperlink{4}{\bf s4\,} with
property \hyperlink{p4}{\bf p4} of Section~\ref{sec2}, we obtain
\begin{equation}\label{4.c1}
 \!\!\!\!
 \wit N^+(\w,\alpha)\le \wit F^+(\w,\alpha)\le 0\le \wit F^-(\w,\alpha)
 \le \wit N^-(\w,\alpha)
 \quad\text{if $\alpha\in(-\infty,\alpha_0]$}
\end{equation}
for all $\w\in\W$, where $\wit N^\pm(\w,\alpha)$ are the principal functions for
\eqref{4.hamilperwl}$^\alpha$, whose existence has been
guaranteed at the beginning of the proof.
This fact will allow us to prove the next assertion.
\begin{itemize}
\item[\hypertarget{5}\text{\bf s5.}]
 $\wit N^+(\w,\alpha)<0\le \wit N^-(\w,\alpha)$ for all $\w\in\W$ if
 $\alpha<\alpha_0$.
\end{itemize}
Note that, as explained before the first step, this property completes the proof.
Note also that the second inequality is already proved: see \eqref{4.c1}.
\par
So we must just prove that $\wit N^+(\w,\alpha)<0$.
We will use below this immediate consequence of
Theorem~\ref{4.teorlibro}(i):
\begin{equation}\label{4.existealfa}
\text{there exists $\alpha_1>0$ such that
$[-\alpha_1,\alpha_1]\subset \mI$}\,.
\end{equation}
Let us proceed by contradiction, assuming that there exists
$\wit\alpha\le\alpha_0$ such that \lq\lq$\wit N^+(\w,\wit\alpha)<0$" is false.
Then, since
\eqref{4.ordenN} holds, there exists $\wit\w\in\W$
and a vector $\bv\in\R^n$, $\bv\ne\bcero$
(which we fix from now on),
such that $\bv^T\wit N^+(\wit\w,\wit\alpha)\,\bv=0$.
It also follows from \eqref{4.ordenN} and \eqref{4.c1} that
\begin{equation}\label{4.c3}
 \bv^T\wit N^+(\wit\w,\alpha)\,\bv=0 \quad\text{if $\alpha\le\wit\alpha$}\,.
\end{equation}
Let us define the holomorphic function $h\colon\C^+\to\C$ by
\[
 h(\alpha+i\beta):=\bv^T\wit M^+(\wit\w,\alpha+i\beta)\,\bv\,.
\]
As seen in the proof of property \hyperlink{3}{\bf s3},
there exists a holomorphic extension $\wit h$ of $h$ to
$\C-[\wit\alpha,\infty)$. Note also that
\eqref{4.c1} and \eqref{4.c3} yield
\[
 \wit h(\alpha)=\lim_{\beta\to 0^+} h(\alpha+i\beta)=
 \bv^T\wit F^+(\wit\w,\alpha)\,\bv=0
 \quad\text{for all $\alpha\le\wit\alpha$}\,.
\]
The principle of isolated zeroes ensures then that
\[
 \wit h(\alpha+i\beta)=\bv^T\wit M^+(\wit\w,\alpha+i\beta)\,\bv=0
 \quad\text{if $\alpha\in\R$ and $\beta>0$}\,.
\]
Therefore
$\bv^T\Rea \wit M^+(\wit\w,\alpha+i\beta)\,\bv=0$ and
$\bv^T\Ima \wit M^+(\wit\w,\alpha+i\beta)\,\bv=0$ for
$\alpha\in\R$ and $\beta>0$.
These equalities combined with property \hyperlink{2}{\bf s2\,} ensure that,
if $\alpha\le\wit\alpha$ and $\beta>0$, then
$\Rea\wit M^+(\wit\w,\alpha+i\beta)\,\bv=\bcero$ and
$\Ima\wit M^+(\wit\w,\alpha+i\beta)\,\bv=\bcero$.
In other words, $\wit M^+(\wit\w,\alpha+i\beta)\,\bv=\bcero$
if $\alpha\le\wit\alpha$ and $\beta>0$. A new application of the
principle of isolated zeroes, now to components of
the holomorphic vector function
$\wit M^+(\wit\w,\alpha+i\beta)\,\bv$ defined on the upper complex
half-plane, shows that $\wit M^+(\wit\w,\alpha+i\beta)\,\bv=\bcero$
if $\alpha\in\R$ and $\beta>0$. Now we use property \hyperlink{3}{\bf s3\,}
in order to deduce from Theorem~\ref{4.teorherg}(i)
the existence of a point $\alpha_2\in[-\alpha_1,\alpha_1]$
(where $\alpha_1$ satisfies \eqref{4.existealfa})
such that there exists
\begin{equation}\label{4.c5}
 \wit F^+(\wit\w,\alpha_2):=
 \lim_{\beta\to 0^+} \wit M^+(\wit\w,\alpha_2+i\beta)\,,
\end{equation}
so that $\wit F^+(\wit\w,\alpha_2)\,\bv=\bcero$.
Theorem \ref{2.teorSSPT}
combined with \eqref{4.c5} ensure that the Lagrange plane
$\wit l^+(\wit\w,\alpha_2)$ of
\eqref{4.hamilperwl}$^{\alpha_2}$ (see \eqref{2.defl})
can be represented by $\lsm I_n\\\wit F^+(\wit\w,\alpha_2)\rsm$. And we
already know that it can be also represented by
$\lsm M^+(\wit\w,\alpha_2)\\I_n\rsm$: see Remark~\ref{4.inver}(a).
This means that $\wit F^+(\wit\w,\alpha_2)$ is a nonsingular matrix (it agrees with
$(M^+)^{-1}(\wit\w,\alpha_2)$), which contradicts the equality
$\wit F^+(\wit\w,\alpha_2)\,\bv=\bcero$.
We have arrived to the sought-for contradiction, and
hence the proof is complete.
\end{proof}
We complete Section \ref{sec4.1} by formulating
the results on Herglotz matrix-valued functions which we have used
in the proof of Theorem~\ref{4.teorizq}.
The first one can be found in~\cite{koos} and~\cite{gets}, and a proof of the second
one is given in Theorem 3.15 of~\cite{jonnf}.
\begin{teor}\label{4.teorherg}
Let $G\colon\C^+\to\s_n(\C)$ be a Herglotz function, with $\Ima G\ge 0$. Then,
\begin{itemize}
\item[\rm(i)] for Lebesgue a.e.~$\alpha\in\R$
there exists the nontangential limit from the upper half-plane
$\lim_{\lambda\searrow\alpha} G(\lambda)$.
\item[\rm(ii)] There exist real symmetric matrices $L$ and $K$
and a real matrix-valued function $P(t)$
defined for $t\in\R$, which is symmetric, nondecreasing and right-continuous,
such that the {\em Nevalinna--Riesz--Herglotz representation\/}
\begin{equation}\label{4.nrh}
 G(\lambda)=L+K\,\lambda+\int_\R\left(\frac{1}{t-\lambda}-
 \frac{t}{t^2+1}\right)dP(t)
\end{equation}
holds for $\lambda\in\C^+$, with
\[
 L=\Rea G(i)\qquad\text{and}\qquad
 K=\lim_{\beta\to\infty}\frac{1}{i\beta}\:G(i\beta)\ge 0\,.
\]
\item[\rm(iii)] Let us represent $P\{\alpha\}=P(\alpha^+)-
P(\alpha^-)=
P(\alpha)-\lim_{\mu\to\alpha^-}P(\mu)$ for $\alpha\in\R$.
The {\em Stieltjes inversion formula}
\[
 \frac{1}{2}\,(P\{\alpha_1\}
 +P\{\alpha_2\})+\int_{(\alpha_1,\alpha_2)}dP(t)=
 \frac{1}{\pi}\,\lim_{\beta\to 0^+}\int_{\alpha_1}^{\alpha_2}
 \Ima G(\alpha+i\beta)\,d\alpha
\]
holds. In addition,
\[
\begin{split}
 P\{\alpha\}&
 =\lim_{\beta\to 0^+} \beta\,\Ima G(\alpha+i\beta)=
 -i\lim_{\beta\to 0^+}\beta\, G(\alpha+i\beta)\,,\\
 0&=\lim_{\beta\to 0^+} \beta\,\Rea G(\alpha+i\beta)\,.
\end{split}
\]
In particular, the matrix-valued measure $dP$ in representation \eqref{4.nrh} is uniquely
determined.
\end{itemize}
\end{teor}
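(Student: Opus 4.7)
The plan is to reduce the matrix statements to classical scalar Nevalinna--Riesz--Herglotz theory via quadratic forms, and to reassemble the matrix-valued objects by polarization. For each $\bv\in\R^n$ the scalar function $g_\bv(\lambda):=\bv^T G(\lambda)\,\bv$ is holomorphic on $\C^+$ with $\Ima g_\bv(\lambda)=\bv^T\Ima G(\lambda)\,\bv\ge 0$, so $g_\bv$ is a scalar Herglotz function with nonnegative imaginary part. Since $G$ takes complex symmetric values and the coefficients $L$, $K$, $P$ to be produced are real, the $\C$-bilinear form $(\bv,\bw)\mapsto\bv^T G(\lambda)\,\bw$, and hence $G$ itself, is determined by its restriction to real vectors; this is what allows the reduction.

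The scalar NRH representation for each $g_\bv$ is obtained by transplanting to the unit disk via $\phi(z)=i(1+z)/(1-z)$: the composition $\widetilde g_\bv(z):=-i\,g_\bv(\phi(z))$ has nonnegative real part on $\{|z|<1\}$, so the classical Herglotz--Riesz theorem yields a representation by a finite positive Borel measure on the circle; pulling back to $\C^+$ gives
\[
g_\bv(\lambda)=a_\bv+b_\bv\lambda+\int_\R\left(\frac{1}{t-\lambda}-\frac{t}{t^2+1}\right)d\nu_\bv(t)\,,
\]
with $a_\bv=\Rea g_\bv(i)$, $b_\bv=\lim_{\beta\to\infty}(i\beta)^{-1}g_\bv(i\beta)\ge 0$, and $\nu_\bv$ a positive Borel measure on $\R$ with $\int(t^2+1)^{-1}d\nu_\bv(t)<\infty$. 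The scalar version of (i) is Fatou's theorem applied to the bounded holomorphic $\widetilde g_\bv/(1+\widetilde g_\bv)$ on the disk, and the scalar version of (iii) is a standard Poisson-kernel computation from this representation.

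Next I would reassemble the matrix objects by polarization. Set $L:=\Rea G(i)$ and $K:=\lim_{\beta\to\infty}(i\beta)^{-1}G(i\beta)$; both exist by the scalar result applied entry-wise, both lie in $\s_n(\R)$, and $K\ge 0$. For $\bv,\bw\in\R^n$ define the polarized signed measures $\nu_{\bv,\bw}:=\tfrac{1}{4}(\nu_{\bv+\bw}-\nu_{\bv-\bw})$; fixing a basis $\{\be_j\}$ of $\R^n$, the matrix-valued Borel measure with entries $(dP)_{jk}:=d\nu_{\be_k,\be_j}$ is symmetric and satisfies $\bv^T dP\,\bv=d\nu_\bv\ge 0$ for every $\bv$, so $P(t):=\int_{(-\infty,t]}dP(s)$, suitably normalized, is symmetric, nondecreasing, and right-continuous, and combining the scalar representations yields \eqref{4.nrh}. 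Part (i) of the theorem follows by applying the scalar Fatou theorem along a countable dense family of $\bv$'s and polarizing; part (iii) follows from \eqref{4.nrh}: using $\Ima(t-\alpha-i\beta)^{-1}=\beta/((t-\alpha)^2+\beta^2)$ and Fubini,
\[
\frac{1}{\pi}\int_{\alpha_1}^{\alpha_2}\Ima G(\alpha+i\beta)\,d\alpha=\frac{\beta K(\alpha_2-\alpha_1)}{\pi}+\int_\R\frac{1}{\pi}\int_{\alpha_1}^{\alpha_2}\frac{\beta\,d\alpha}{(t-\alpha)^2+\beta^2}\,dP(t)\,,
\]
and as $\beta\to 0^+$ the inner Poisson integral converges pointwise to $\chi_{(\alpha_1,\alpha_2)}(t)+\tfrac{1}{2}\chi_{\{\alpha_1,\alpha_2\}}(t)$ under a uniform bound; dominated convergence yields the Stieltjes inversion formula. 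The limits $\beta\,\Ima G(\alpha+i\beta)\to P\{\alpha\}$ and $\beta\,\Rea G(\alpha+i\beta)\to 0$ follow analogously, and uniqueness of $dP$ is then forced by Stieltjes inversion.

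The main obstacle I expect is the matrix measure-theoretic bookkeeping in the polarization step: although the scalar measures $\nu_\bv$ are uniquely determined, one must verify that the entries $\nu_{\be_k,\be_j}$ assemble into a $\sigma$-additive, real symmetric, positive semidefinite matrix-valued Borel measure, and that the resulting matrix representation is compatible with the scalar representation for \emph{every} $\bv\in\R^n$, not just basis elements. Uniqueness in the scalar NRH theorem combined with the polarization identity forces this compatibility, but making the gluing rigorous requires invoking the scalar Stieltjes inversion formula to identify representing measures and then using density to propagate the identification.
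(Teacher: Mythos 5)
The paper itself does not prove this theorem: it states it as a known result and cites Koosis~\cite{koos} and Gesztesy--Tsekanovskii~\cite{gets} (and, for Theorem~\ref{4.teorP}, Theorem~3.15 of~\cite{jonnf}). Your reduction to the scalar case via the quadratic forms $g_\bv(\lambda)=\bv^T G(\lambda)\,\bv$, followed by the Cayley transplant to the disk, the classical scalar Herglotz--Riesz/Fatou theory, and reassembly of the matrix objects by polarization, is correct and is in fact the standard route taken in~\cite{gets}. Two small points worth tightening if this were written out in full: for part~(i), the ``countable dense family'' is unnecessary --- it suffices to apply scalar Fatou to the finite collection $\{\be_j\}\cup\{\be_j\pm\be_k\}$, intersect the corresponding full-measure sets, and recover the nontangential limits of all entries $G_{jk}$ by polarization; and for the gluing step, the bilinearity of $(\bv,\bw)\mapsto\nu_{\bv,\bw}$ that makes the polarization compatible with arbitrary $\bv$ is exactly what you point out: the NRH representing-measure map extends linearly to differences of Herglotz functions by uniqueness, and $\bv\mapsto g_\bv(\lambda)$ is a genuine quadratic form for each fixed $\lambda$ because $G(\lambda)$ is symmetric.
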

\begin{teor}\label{4.teorP}
Let $(G_m)$ (for $m\in\N$) and $G_*$ be symmetric Herglotz
matrix-valued functions defined on $\C^+$ and with positive
semidefinite imaginary parts. Suppose that
$G_*(\lambda)=\lim_{m\to\infty}G_m(\lambda)$ uniformly on the
compact subsets of $\C^+$, and write
\[
\begin{split}
 G_m(\lambda)&=L_m+K_m\,\lambda+\int_\R\left(\frac{1}{t-\lambda}-
 \frac{t}{t^2+1}\right)
 dP_m(t)\,,\\
 G_*(\lambda)&=L_*+K_*\,\lambda+\int_\R\left(\frac{1}{t-\lambda}-
 \frac{t}{t^2+1}\right)
 dP_*(t)\,.
\end{split}
\]
Then, $dP_*=\lim_{m\to\infty}dP_m$ in the weak$^*$ sense; that
is,
\[
 \lim_{m\to\infty}\int_\R f^*(t)\,dP_m(t)\,
 f(t)=\int_\R f^*(t)\,dP_*(t)\,f(t)
\]
for every $f\colon\R\to\C^{\,2n}$ continuous and with compact support.
\end{teor}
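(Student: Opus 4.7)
The plan is to reduce the matrix statement to a scalar Herglotz convergence lemma via quadratic forms, establish the scalar lemma through Cauchy transforms, and reassemble using polarization. Throughout let $d$ denote the common matrix dimension of $G_m$ and $G_*$ (so that $f\colon\R\to\C^{d}$ with $d=2n$ in the theorem's notation).

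\textbf{Step 1 (scalar reduction).} I would first fix $\bv\in\R^{d}$ and consider the scalar Herglotz functions $g_m^\bv(\lambda):=\bv^T G_m(\lambda)\bv$ and $g_*^\bv(\lambda):=\bv^T G_*(\lambda)\bv$, which inherit nonnegative imaginary parts and whose NRH parameters are $L_m^\bv=\bv^T L_m\bv$, $K_m^\bv=\bv^T K_m\bv\ge 0$ and scalar representing measures $d\mu_m^\bv:=\bv^T dP_m\bv\ge 0$. Uniform convergence $G_m\to G_*$ on compact subsets of $\C^+$ transfers verbatim to $g_m^\bv\to g_*^\bv$.

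\textbf{Step 2 (scalar Herglotz convergence).} I next aim to show that $d\mu_m^\bv\to d\mu_*^\bv$ weakly$^*$ against $C_c(\R)$. From the identity $\Ima g_m^\bv(i)=K_m^\bv+\int_\R(1+t^2)^{-1}d\mu_m^\bv$, the convergence $g_m^\bv(i)\to g_*^\bv(i)$ together with nonnegativity of both summands immediately provides a uniform bound for the family of renormalized finite positive measures $d\nu_m^\bv:=(1+t^2)^{-1}d\mu_m^\bv$, and likewise $L_m^\bv=\Rea g_m^\bv(i)\to L_*^\bv$. Rearranging the NRH representation so as to eliminate $K_m^\bv$ via $K_m^\bv+\nu_m^\bv(\R)=\Ima g_m^\bv(i)$, a direct algebraic manipulation of the integrand $(t-\lambda)^{-1}-t(t^2+1)^{-1}$ yields
\[
 (1+\lambda^2)\int_\R\frac{1}{t-\lambda}\,d\nu_m^\bv(t)=g_m^\bv(\lambda)-L_m^\bv-\lambda\,\Ima g_m^\bv(i),
\]
whose right-hand side converges pointwise in $\lambda\in\C^+$ to the analogous expression for $g_*^\bv$. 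Thus the Cauchy transforms of the bounded family $d\nu_m^\bv$ converge pointwise on $\C^+$, and a classical continuity theorem (Helly selection combined with the injectivity of the Stieltjes transform via the inversion formula of Theorem~\ref{4.teorherg}(iii)) upgrades this to weak$^*$ convergence $d\nu_m^\bv\to d\nu_*^\bv$ in $C_0(\R)^*$. For any $\phi\in C_c(\R)$ the function $(1+t^2)\phi(t)$ lies in $C_c(\R)\subset C_0(\R)$, so
\[
 \int_\R\phi\,d\mu_m^\bv=\int_\R(1+t^2)\phi\,d\nu_m^\bv\longrightarrow\int_\R(1+t^2)\phi\,d\nu_*^\bv=\int_\R\phi\,d\mu_*^\bv.
\]

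\textbf{Step 3 (polarization and reassembly; main obstacle).} Since each $P_m(t)$ is real symmetric, I recover its entries from quadratic forms via the polarization identity
\[
 (P_m)_{jk}=\tfrac{1}{2}\left[(e_j+e_k)^T P_m(e_j+e_k)-e_j^T P_m e_j-e_k^T P_m e_k\right],
\]
and applying Step 2 to $\bv\in\{e_j,\,e_k,\,e_j+e_k\}$ delivers weak$^*$ convergence of every entry $(dP_m)_{jk}$ against $C_c(\R)$. For $f=(f_1,\dots,f_d)\in C_c(\R,\C^{d})$ I then expand
\[
 f^*(t)\,dP_m(t)\,f(t)=\sum_{j,k=1}^{d}\overline{f_j(t)}f_k(t)\,(dP_m)_{jk}(t),
\]
and each summand has continuous compactly supported integrand $\overline{f_j}f_k$, so the entrywise weak$^*$ convergence yields the desired conclusion. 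The genuinely analytic ingredient I expect to require the most care is the Cauchy-transform continuity theorem invoked in Step 2; the structural trick that keeps everything on solid ground is passing from the possibly non-finite $d\mu_m^\bv$ to the normalized finite measures $d\nu_m^\bv=(1+t^2)^{-1}d\mu_m^\bv$ before invoking that theorem, and then harmlessly absorbing the factor $1+t^2$ into the test function $\phi$ at the very last line.
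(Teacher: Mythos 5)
The paper does not supply its own proof of this statement: it simply cites Theorem 3.15 of \cite{jonnf}, so there is no internal argument to compare against. Your proof is correct and self-contained. The scalar reduction in Step~1 is legitimate because, for real $\bv$, both the imaginary part $\bv^T\Ima G_m\bv\ge 0$ and the positive-semidefinite measure $\bv^T\,dP_m\,\bv\ge 0$ are inherited, so $g_m^\bv$ is a genuine scalar Herglotz function with the claimed NRH data. The algebraic identity in Step~2 checks out (one indeed has $(1+\lambda^2)/((t-\lambda)(t^2+1))=(t-\lambda)^{-1}-(t+\lambda)(t^2+1)^{-1}$), and your normalization to the finite measures $d\nu_m^\bv=(1+t^2)^{-1}d\mu_m^\bv$ is exactly what makes the Helly/Banach--Alaoglu compactness available; the total variations $\nu_m^\bv(\R)\le\Ima g_m^\bv(i)$ are uniformly bounded because $g_m^\bv(i)\to g_*^\bv(i)$. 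The passage from pointwise convergence of Cauchy transforms to weak$^*$ convergence in $C_0(\R)^*$ via subsequential limits plus Stieltjes inversion is sound: any subsequential weak$^*$ limit $\tilde\nu$ has the same Cauchy transform as $\nu_*^\bv$ (the kernel $t\mapsto(t-\lambda)^{-1}$ lies in $C_0(\R)$ for fixed $\lambda\in\C^+$), hence $\tilde\nu=\nu_*^\bv$, and the subsequence principle closes the argument; possible loss of mass at infinity does not interfere because the Cauchy transform is blind to it and so is testing against $C_c$. Step~3, polarization with real vectors $e_j,e_k,e_j+e_k$ followed by the entrywise expansion $f^*dP_mf=\sum_{j,k}\overline{f_j}f_k(dP_m)_{jk}$, correctly reassembles the matrix statement for complex-valued $f\in C_c$. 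One small caveat worth recording: your key identity degenerates at $\lambda=i$, where $1+\lambda^2=0$, so pointwise convergence of the Cauchy transforms should be asserted on $\C^+\setminus\{i\}$ (which is more than enough for the uniqueness step). The structure you use --- scalar reduction, NRH normalization, Cauchy-transform continuity, polarization --- is the standard template for matrix Herglotz convergence results and is almost certainly what the cited reference does as well.
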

\subsection{Proof of the main result}
Now we can finally prove Theorem~\ref{4.teormejor}.
\medskip\par\noindent
{\em Proof of Theorem~\ref{4.teormejor}}.
The notation established in Remark~\ref{4.inver} will be used in this proof.
\smallskip\par
(i) It is obvious that $\mI_0\subseteq\mI$, where $\mI_0$ and $\mI$ are respectively
defined by \eqref{4.defI0} and \eqref{4.defI}.
%We also deduce from Theorem~\ref{4.teorlibro}(i) that
%there exists $\alpha_1\in(0,\infty]$ with
%$[0,\alpha_1]\subset\mI_0\subseteq\mI$.
We will prove that
\begin{equation}\label{4.goal}
(-\infty,0]\subset\mI\,.
\end{equation}
Let us first explain why this proves (i). If there exists $\alpha_*\in I$ with
$\alpha_*>0$, we can replace \eqref{4.hamil} by \eqref{4.hamilperwl}$^{\alpha_*}$
in Hypothesis~\ref{4.hipos} in order to conclude that $(-\infty,\alpha_*]\subset\mI$.
This ensures that $\mI$ is either a negative half-line (containing $(-\infty,0]$)
or the whole $\R$. It follows
trivially that $\mI$ agrees with $\mI_0$, so that Theorem~\ref{4.teorlibro}(i)
completes the proof of (i).
\par
So, proving \eqref{4.goal} is our goal.
Let us take $\alpha_0\in\R$ as in the statement of Theorem \ref{4.teorizq},
so that the systems \eqref{4.hamilperzl}$^\alpha$
with $\alpha\in(-\infty,\alpha_0]$ have ED and satisfy NC. This ensures that
$(-\infty,\alpha_0]\subseteq\mI$.
We assume that $\alpha_0<0$ (otherwise there is nothing to prove).
Note that we must just prove that the family \eqref{4.hamilperzl}$^{\alpha}$ has ED
and satisfies NC for $\alpha\in(\alpha_0,0)$.
\par
We will first prove the assertion concerning ED.
Let us fix $\alpha_1\in(\alpha_0,0)$.
The robustness of the ED and NC (see Theorem \ref{2.teorSSPT}) and
Theorem~\ref{4.teorlibro}(ii) allow us to choose
$\ep_0<0$ close enough to $0$ as to guarantee these two conditions:
the families \eqref{4.hamilperzle}$_{\ep}^{\alpha_0}$ have ED and satisfy NC
for all $\ep\in[\ep_0,0]$; and if
the point $(0,\ep_2)$ belongs to the line $\mathcal{R}$ (in the
$(\alpha,\ep)-$plane) which joins $(\alpha_0,\ep_0)$ with
$(\alpha_1,0)$ (so that $\ep_2>0$), then
the family \eqref{4.hamilperzle}$_{\ep_2}^{0}$ has ED and is
UWD (or, using the words of Theorem~\ref{4.teorlibro}(ii),
$\ep_2\in(0,\rho(0))$). Note that the points of $\mathcal{R}$ are
$(\alpha_1+\gamma, \gamma\,(\ep_2/(-\alpha_1)))$
for $\gamma:=\alpha-\alpha_1$, and that the
families \eqref{4.hamilperzle}$_{\ep}^{\alpha}$ for points
$(\alpha,\ep)\in\mathcal{R}$ can be rewritten as
\[
 \bz'=\left[\begin{array}{cc} H_1(\wt)&
 H_3(\wt)+\gamma(-\ep_2/\alpha_1)\,I_n\\
 H_2(\wt)-\alpha_1\Delta(\wt)-\gamma\Delta(\wt)
 &-H_1^T(\wt)\end{array}\right]\bz\,;
\]
that is, as
\begin{equation}\label{4.hamilperg}
 \bz'=\big(H^{\alpha_1}(\wt)+\gamma\J\,\G(\wt)\big)\,\bz
\end{equation}
for $\Gamma:=\lsm \Delta&0_n\\0_n&(-\ep_2/\alpha_1)\,I_n \rsm$.
Since $\Gamma>0$, it satisfies the Atkinson condition
\eqref{3.atkgen} for all $\w_0\in\W$ (see Remark~\ref{3.notaatki}(e)). In
addition, this family has ED for the positive point
$\gamma_1:=-\alpha_1$
(since in this case we have the family
\eqref{4.hamilperzle}$_{\ep_2}^{0}$),
so that the families corresponding to values of $\gamma$ close
enough to $\gamma_1$ also have ED.
We deduce from property \hyperlink{p7}{\bf p7} of Section \ref{sec2} that
the rotation number with respect to $m_0$ is constant for all the families
\eqref{4.hamilperg} corresponding to the values of $\gamma$ in an open interval
centered in $\gamma_1$. As a matter of fact, it is 0 at
$\gamma_1$, since the family \eqref{4.hamilperzle}$_{\ep_2}^{0}$
satisfies $H_3+\ep_2I_n>0$ and is UWD: see property \hyperlink{p9}{\bf p9}.
The rotation number is also 0 at the negative point
$\gamma_0=\alpha_0-\alpha_1$. To check this assertion note that,
for this value of $\gamma$, the family agrees with
\eqref{4.hamilperzle}$_{\ep_0}^{\alpha_0}$, and that
the families \eqref{4.hamilperzle}$_{\ep}^{\alpha_0}$
have ED for all $\ep\in[\ep_0,0]$; deduce that
the rotation number with respect to $m_0$ is the same for all these families
(as property \hyperlink{p8}{\bf p8} ensures);
and note that the rotation number of the
family \eqref{4.hamilperzle}$_0^{\alpha_0}$, which satisfies NC
with $H_3\ge 0$, is 0 (see property \hyperlink{p10}{\bf p10}).
In addition, the rotation number increases as
$\gamma$ increases (see property \hyperlink{p6}{p6}).
Therefore, it is $0$ for $\gamma\in[\gamma_0,\gamma_1]$,
and this and the required condition $\Supp m_0=\W$
ensure that the families corresponding to $(\gamma_0,\gamma_1)$
have ED: see property \hyperlink{p8}{\bf p8}.
This includes the family corresponding to $\gamma=0$, which
is \eqref{4.hamilperzl}$^{\alpha_1}$. Our assertion concerning the ED is proved.
\par
Let us now prove that \eqref{4.hamilperzl}$^{\alpha}$ also
satisfies the NC (i.e., that $M^+(\w,\alpha)$ globally exists)
for $\alpha\in[\alpha_0,0]$,
which will complete the proof of \eqref{4.goal} and hence of (i).
We define
\[
 \wit\mI:=\{\wit\alpha\in[\alpha_0,0]\,|\;
 \text{\eqref{4.hamilperzl}$^\alpha$ has ED and satisfies NC for
 all $\alpha\in(-\infty,\wit \alpha]$}\}\,.
\]
Note that $\wit\mI$ is a nonempty and open subset of $[\alpha_0,0]$,
since $(-\infty,\alpha_0]\subset\mI$ (see Theorem \ref{2.teorSSPT});
and hence that $\alpha_2:=\sup\wit\mI>\alpha_0$.
The goal is to prove that \eqref{4.hamilperzl}$^{\alpha_2}$
satisfies the NC: if so, $\alpha_2\in\wit\mI$, which ensures that
$\alpha_2=0$ and hence that $(-\infty,0]\subset\mI$.
We take a strictly increasing
sequence $(\wit\alpha_m)$ in $[\alpha_0,\alpha_2)$ with limit $\alpha_2$,
and take $\ep_0>0$ such that, if $\ep\in[0,\ep_0]$, then: the families
\eqref{4.hamilperzle}$_\ep^\alpha$ have ED for $\alpha\in[\wit\alpha_1,0]$;
and there globally exist $M_\ep^+(\w,0)$.
Since $H_3+\ep_0 I_n>0$, property \hyperlink{p2}{\bf p2} ensures
that the family \eqref{4.hamilperzle}$_{\ep_0}^0$ is UWD. And since
$H_3+\ep_0 I_n>0$ increases as $\ep$ increases and $H_2-\alpha\Delta$
decreases as $\alpha$ increases, property \hyperlink{p5}{\bf p5}
guarantees that all the families
\eqref{4.hamilperzle}$_\ep^{\alpha}$ for $\alpha\in[\wit\alpha_1,0]$
and $\ep\in(0,\ep_0]$
are also UWD, with
\[
 N_{\ep}^+(\w,\wit\alpha_m)\le N_{\ep}^+(\w,\wit\alpha_{m+1})\le
 N_\ep^+(\w,\alpha_2)\le N_{\ep_0}^+(\w,\alpha_2)
\]
for all $\w\in\W$. In addition, property \hyperlink{p3}{\bf p3} ensures that
$N_\ep^+(\w,\alpha)=M_\ep^+(\w,\alpha)$, so that
\[
 M_{\ep}^+(\w,\wit\alpha_m)\le M_{\ep}^+(\w,\wit\alpha_{m+1})\le
 M_\ep^+(\w,\alpha_2)\le M_{\ep_0}^+(\w,\alpha_2)
\]
for all $\w\in\W$\,.
On the other hand, $M^+(\w,\alpha)=\lim_{\ep\to 0^+}M_{\ep}^+(\w,\alpha)$
for all $\alpha\in[\wit\lambda_1,\lambda_3)$, as
Theorem~\ref{2.teorSSPT}
ensures; so that
\[
 M^+(\w,\wit\alpha_m)\le M^+(\w,\wit\alpha_{m+1})\le
 M_{\ep_0}^+(\w,\alpha_2)\,.
\]
Therefore, there exists $F^+(\w,\alpha_2):=
\lim_{m\to\infty}M^+(\w,\wit\alpha_m)$ for all $\w\in\W$, which ensures
that the Lagrange plane represented by $\lsm I_n\\F^+(\w,\alpha_2)\rsm$
is the limit in the Lagrangian manifold
of those given by $\lsm I_n\\M^+(\w,\wit\alpha_m)\rsm$
(see e.g.~Proposition 1.25 of~\cite{jonnf}); that is, of
the sequence ($l^+(\w,\wit\alpha_m)$).
Theorem \ref{2.teorSSPT}
ensures that $\lsm I_n\\F^+(\w,\alpha_2)\rsm$ represents
$l^+(\w,\alpha_2)$, so that $M^+(\w,\alpha_2)$ globally exists
(and agrees with $F^+(\w,\alpha_2)$).
This completes the proof of (i).
\smallskip\par
(ii) We already know that $\mI=\mI_0$. Therefore, Theorem~\ref{4.teorlibro}(ii)
ensures that the function $\rho$ takes
posirive values; and that it
is lower semicontinuous and nonincreasing, so that it is continuous
from the right. Let us
take $\alpha_0\in\mI$ and a sequence $(\alpha_m)\uparrow\alpha_0$, and call
$\rho_0:=\lim_{m\to\infty}\rho(\alpha_m)$ (with $\rho_0\le\infty$).
We know that $\rho(\alpha_0)\le\rho_0$, and our goal is to prove
that they are equal. Or, in other words, that the family
\eqref{4.hamilperzle}$^\ep_{\alpha_0}$ has ED and is UWD for
$\ep\in(0,\rho_0)$.
\par
The UWD is deduced by applying Theorem \ref{4.teor5.61}
to the families \eqref{4.hamilperzle}$_\ep^\alpha$ for a fixed
$\ep\in(0,\rho_0)$ and $\alpha$ varying in $\R$. Let
$N_\ep^\pm(\w,\alpha_0)$ be the corresponding principal functions.
Property \hyperlink{p5}{\bf p5} shows that
\begin{equation}\label{4.des1}
 N_{\ep_1}^+(\w,\alpha_0)\le N_{\ep_2}^+(\w,\alpha_0)\le
 N_{\ep_2}^-(\w,\alpha_0)\le N_{\ep_1}^-(\w,\alpha_0)
\end{equation}
for all $\w\in\W$ if $0<\ep_1\le\ep_2\le\rho_0$.
And, in order to prove the existence of ED, we must just prove that
$N_{\ep}^+(\w,\alpha_0)<N_\ep^-(\w,\alpha_0)$
for all $\w\in\W$ if $\ep\in(0,\rho_0)$: property \hyperlink{p3}{\bf p3}
ensures that in this case
$M_{\ep}^\pm(\w,\alpha_0)=N_{\ep}^\pm(\w,\alpha_0)$
and hence that $M_{\ep}^+(\w,\alpha_0)<M_{\ep}^-(\w,\alpha_0)$
if $\ep\in(0,\rho(\alpha_0))$. This will be done in
point \hyperlink{7}{\bf s7\,} below, after some preliminary work.
\par
Let us consider the new auxiliary families
\begin{equation}\label{4.hamilperzlE}
 \bz'=H_\mu^\lambda(\wt)\,\bz\,,\quad
 \text{where }\; H_\mu^\lambda(\w):=\left[\begin{array}{cc} H_1(\w)&
 H_3(\w)+\mu I_n\\
 H_2(\w)-\alpha\Delta(\w)&-H_1^T(\w)\end{array}\right]\bz\,,
\end{equation}
for $\mu\in\C$ and $\alpha\in\mI$,
which agree with \eqref{4.hamilperzle}$_\ep^\alpha$ if $\mu=\ep\in\R$.
If we take a real value of $\mu$, say $\ep_0$, in the interval $(0,\rho(\alpha_0))$,
then \eqref{4.hamilperzlE}$_{\ep_0}^{\alpha_0}$ has ED. Hence, according
to Theorem~\ref{3.teornoatki}(i), all the families
\eqref{4.hamilperzlE}$_{\mu}^{\alpha_0}$ have ED if $\Ima\mu>0$, and
the Weyl matrices $M^\pm_\mu(\w,\alpha_0)$ determine Herglotz
functions on $\C^+$ for each $\w\in\W$ fixed,
namely $\mu\mapsto M^\pm_\mu(\w,\alpha_0)$, with
$\pm\Ima M^\pm_\mu(\w,\alpha_0)\ge 0$.
The same reason justifies the existence of the Weyl functions
(with the same properties)
$M^\pm_\mu(\w,\alpha_m)$ for all $m\in\N$ if $\Ima\mu>0$.
Recall also that $(0,\rho_0)\subseteq(0,\rho(\alpha_m))$
for all $m\ge 1$. These facts allow will be used in the proof of the
following assertion:
\begin{itemize}
\item[\hypertarget{6}\text{\bf s6.}]
For all $\w\in\W$ and all
$\ep\in(0,\rho_0)$ there exist the limits $\wit F^\pm_{\ep}(\w,\alpha_0):=
\lim_{\beta\to 0^+} M^\pm_{\ep+i\beta}(\w,\alpha_0)$,
and they are real matrices. Moreover,
the matrix-valued functions $t\mapsto\wit F_\ep^\pm(\wt,\alpha_0)$
are two globally defined solutions of the Riccati
equation associated to \eqref{4.hamilperzlE}$_{\ep_0}^\alpha$.
In particular, $N^+_\ep(\w,\alpha_0)\le\wit F_\ep^\pm(\w,\alpha_0)
\le N_\ep^-(\w,\alpha_0)$
\end{itemize}
%\begin{itemize}
%\item[\hypertarget{6}\text{\bf 6.}]
%There exist a set $\mB\subseteq(0,\rho_0)$
%with full Lebesgue measure and, for every $\ep\in\mB$, a $\sigma$-invariant
%set $\W_\ep\subset\W$ with $m_0(\W_\ep)=1$ such that,
%for all $\ep\in\mB$ and all $\w\in\W_\ep$,
%there exist the limits $\wit F^\pm_{\ep}(\w,\alpha_0):=
%\lim_{\beta\to 0^+} M^\pm_{\ep+i\beta}(\w,\alpha_0)$,
%and they are real matrices with
%$N^+_\ep(\w,\alpha_0)\le\wit F_\ep^\pm(\w,\alpha_0)
%\le N^-(\w,\alpha_0)$.
%\end{itemize}
Let us sketch this proof in the case of $M^+$: it adapts the arguments leading
us to the proof of properties \hyperlink{3}{\bf s3\,} and
\hyperlink{4}{\bf s4\,} in Theorem~\ref{4.teorizq}, where all the details
are provided. We
fix $\w\in\W$ and $\ep\in(0,\ep_0)$, and represent
\[
 M_{\ep+i\beta}^+(\w,\alpha_m)=L_m+K_m\,(\ep+i\beta)+
 \int_\R\left(\frac{1}{t-\alpha-i\beta}-\frac{t}{t^2+1}\right)dP_m(t)
\]
for $m\ge 1$ and
\[
 M_{\ep+i\beta}^+(\w,\alpha_0)=L_0+K_0\,(\ep+i\beta)+
 \int_\R\left(\frac{1}{t-\ep-i\beta}-\frac{t}{t^2+1}\right)dP_0(t)
\]
Let us take $m\ge 1$. Then
\[
 \frac{1}{2}\,(P_m\{\ep_1\}
 +P_m\{\ep_2\})+\int_{(\ep_1,\ep_2)}dP_m(t)=
 \frac{1}{\pi}\,\int_{\ep_1}^{\ep_2}
 \Ima M_\ep^+(\wD,\alpha)\,d\ep=0_n
\]
whenever $0<\ep_1<\ep_2<\rho_0$. This ensures that
$\int_{(0,\rho_0)} dP_m(t)=0_n$ if $m\ge 1$,
which together with the property
$\lim_{m\to\infty}M_{\ep+i\beta}^+(\w,\alpha_m)=M_{\ep+i\beta}^+(\w,\alpha_0)$
uniformly on the compact sets of $\C^+$ allows us to deduce that
$(dP_m)$ converges to $dP_0$ in the weak$^*$ sense, and hence that
$\int_{(0,\rho_0)}dP_0(t)=0_n$.
\par
Let us take $\delta\in(0,\rho_0/2)$ and
$\mC_\delta:=
\{\ep+i\beta\in\C\,|\;\ep\in[\delta,\rho_0-\delta]\text{ and }
\beta\in(0,1]\}$.
Note that there exist constants $c_1>0$ and $c_2>c_1$ such that
\[
 c_1\le\Frac{|t-\ep-i\beta|^2}{t^2+1}\le c_2
 \quad\text{for $\;t\notin(0,\rho_0)\;$ and
 $\;\ep+i\beta\in\mC_\delta$}\,,
\]
which together wit the previous property leads to
\[
 0_n\le
 \Ima M^+_{\ep+i\beta}(\w,\alpha_0)\le
 \beta\left(K_0+\frac{1}{c_1}\,\Ima M_{\ep+i}^+(\w,\alpha_0)
 \right)
\]
if $\ep\in[\delta,\rho-\delta]$ and $\beta\in(0,1]$. In particular,
$\lim_{\beta\to 0^+}\Ima M_{\ep+i\beta}(\w,\alpha_0)=0_n$
uniformly on the compact subsets of $(0,\rho_0)$.
The Schwarz reflection principle allows us to ensure that
there exists
$\wit F^+_{\ep}(\w,\alpha_0):=
\lim_{\beta\to 0^+} M^+_{\ep+i\beta}(\w,\alpha_0)$,
and it is a real matrix. The arguments used at the end of the
proof of \hyperlink{4}{\bf s4} and to obtain \eqref{4.c1}
complete the proof of \hyperlink{6}{\bf s6} in the case of
$M^+$. And the case of $M^- $ is proved in the same way.
\par
The information provided by \hyperlink{6}{\bf s6\,} will allow us to
adapt the proof of
property \hyperlink{5}{\bf s5\,} in Theorem~\ref{4.teorizq} in order to
conclude that
\begin{itemize}
\item[\hypertarget{7}\text{\bf s7.}]
 $N_{\ep}^+(\w,\alpha_0)<N_\ep^-(\w,\alpha_0)$ for all $\w\in\W$ if $\ep\in(0,\rho_0)$,
\end{itemize}
which, as said before, shows the existence of ED
for $\ep\in(0,\rho_0)$.
\par
We proceed by contradiction, assuming the existence of
$\wit\w\in\W$, $\wit\ep\in(0,\rho_0)$, and
$\bv\in\R^n-\{\bcero\}$
such that $\bv^T\big(N_{\wit\ep}^-(\wit\w,\alpha_0)-
N^+_{\wit\ep}(\wit\w,\alpha_0)\big)\,\bv=0$. This fact and \eqref{4.des1}
ensure that
\begin{equation}\label{4.des2}
 \bv^T\big(N_{\ep}^-(\wit\w,\alpha_0)-N^+_{\ep}(\wit\w,\alpha_0)
 \big)\,\bv=0 \quad\text{if $\ep\in(\wit\ep,\rho_0)$}\,.
\end{equation}
Now we define $h\colon\C^+\to\C$ by
\[
 h(\ep+i\beta):=\bv^T\big(M_{\ep+i\beta}^-(\w,\alpha_0)-
 M^+_{\ep+i\beta}(\w,\alpha_0)\big)\,,
\]
which is holomorphic. In addition,
properties \hyperlink{6}{\bf s6\,} and \hyperlink{p4}{\bf p4} ensure that
\[
 N_\ep^+(\wit\w,\alpha_0)\le
 \wit F_\ep^\pm(\wit\w,\alpha_0)\le N_\ep^-(\wit\w,\alpha_0)
 \quad\text{if $\ep\in(\wit\ep,\rho_0]$}\,.
\]
Therefore, we deduce from \eqref{4.des2} that
\[
 \lim_{\beta\to 0^+}h(\ep+i\beta)=
 \bv^T\big(\wit F^-_{\ep}(\wit\w,\alpha_0)-
 \wit F^+_{\ep}(\wit\w,\alpha_0)\big)\,\bv=0
 \quad\text{for $\ep\in(\wit\ep,\rho_0)$}\,.
\]
Hence, there exists a holomorphic extension $\wit h$ of $h$ to
the set $(\C-\R)\cup(\wit\ep,\rho_0)$ such that $\wit h(\wit\w,\ep)=0$
for $\ep\in\R-[\wit\ep,\rho_0]$.
The principle of isolated zeroes ensures then that
\[
 \wit h(\ep+i\beta)=\bv^T\big(M^-_{\ep+i\beta}(\wit\w,\alpha_0)
 -M^+_{\ep+i\beta}(\wit\w,\alpha_0)\big)\,\bv=0
 \quad\text{if $\ep\in\R$ and $\beta>0$}\,.
\]
Taking now limits at a point $\ep\in(0,\rho(\alpha_0))$ yields
$\bv^T\big(M^-_{\ep}(\wit\w,\alpha_0)
 -M^+_{\ep}(\wit\w,\alpha_0)\big)\,\bv=0$. But
this is impossible, since, as seen before,
$M^-_{\ep}(\wit\w,\alpha_0)-M^+_{\ep}(\wit\w,\alpha_0)>0$.
This is the sought-for contradiction:
the proofs of \hyperlink{7}{\bf s7\,} and the first assertion in (ii) are complete.
The second assertion in (ii) is proved by property \hyperlink{p3}{\bf p3}.
\smallskip\par
(iii) The first assertion in (iii) has been checked at the beginning
of the first step in the proof of Theorem~\ref{4.teorizq}. In order
to check that $\rho$ is injective at the (perhaps nonexistent) interval
at which it takes real values, we take $\wit\alpha\in\mI$ such that
$\wit\rho:=\rho(\wit\alpha)<\infty$. Let us consider the families
\eqref{4.hamilperzle}$_{\wit\rho}^\alpha$ for $\alpha$
varying in $\R$. Since \eqref{4.hamilperzle}$_{\wit\rho}^{\wit\alpha}$
is UWD, Theorem \ref{4.teor5.61} ensures that
\eqref{4.hamilperzle}$_{\wit\rho}^{\wit\alpha}$ has ED and satisfies
the NC for $\alpha<\wit\alpha$. And this ensures that
$\rho(\alpha)>\wit\rho$ if $\alpha<\wit\alpha$, which proves the injectivity
of the map.
\smallskip\par
(iv) Assume for contradiction that $\alpha^*<\infty$ and that the family
\eqref{4.hamilperzl}$^{\alpha^*}$ has ED. Then there exist
$\ep_0>0$ and $\alpha_0<\alpha^*$ such that the families
\eqref{4.hamilperzle}$_\ep^{\alpha}$ have ED if
$\ep\in[0,\ep_0]$ and $\alpha\in[\alpha_0,\alpha^*]$:
see Theorem~\ref{2.teorSSPT}.
%Therefore,
%the rotation number of all these families with respect to $m_0$ is the same:
%see Theorems 2.28 and 2.25 of \cite{jonnf}.
%In addition, it is zero for $\ep\in[0,\ep_0]$ and $\alpha\in[\alpha_0,\alpha^*)$
%(which can be deduced, for instance, from the
%global existence of the corresponding principal functions $N_\ep^+(\w,\alpha)$
%and applying Propositions 5.9 and 5.65 of \cite{jonnf});
%so that it is also zero for $\ep\in[0,\ep_0]$ and $\alpha^*$.
Consequently, since the map $\rho$ is nonincreasing, the
families \eqref{4.hamilperzle}$_\ep^{\alpha}$ are UWD for
$\ep\in(0,\ep_0]$ and $\alpha\in[\alpha_0,\alpha^*]$; and, in addition,
for these values of $\ep$ and $\alpha$ there exists
$M^+_\ep(\w,\alpha)=N^+_\ep(\w,\alpha)$ (see property \hyperlink{p3}{\bf p3}).
Property \hyperlink{p5}{\bf p5} ensures that the two-parametric family
$M^+_\ep(\w,\alpha)$ increases with $\alpha$ and decreases with $\ep$. Let
us take a sequence $(\ep_m)\downarrow 0$ and a point
$\alpha_1\in(\alpha_0,\alpha^*)$.
Then
$M^+(\w,\alpha_1)=\lim_{m\to \infty}M_{\ep_m}^+(\w,\alpha_1)$, so that
\[
 M^+(\w,\alpha_1)\le M^+_{\ep_m}(\w,\alpha^1)\le M^+_{\ep_m}(\w,\alpha^*)
\]
for all $m\ge 1$. In particular, the sequence of matrices
$(M_{\ep_m}(\w,\alpha^*))$, which decreases, is bounded from below.
Therefore, for any
$\w\in\W$, there exists a suitable convergent subsequence
(use the polarization formulas). The continuous variation of
the Lagrange planes associated to the ED ensures that
this limit is necessarily $M^+(\w,\alpha^*)$.
Consequently, the family \eqref{4.hamilperzl}$^{\alpha^*}$
satisfies NC, which ensures that $\alpha^*\in\mI$. This is the
sought-for contradiction. The proof is complete.
\hfill{\qed}
\medskip\par
Note that, under Hypothesis~\ref{4.hipos}, the set $\mI$
defined by \eqref{4.defI} can either
be upper bounded or agree with the whole real line. And, if it
is bounded, then the value of $\rho_0:=\lim_{\alpha\to(\sup\mI)^-}\rho(\alpha)$
can be 0, a positive real value, or $\infty$. We check these assertions
by means of simple autonomous examples:
\begin{list}{}{\leftmargin 12pt}
\item[-] In the case of $\bz'=\lsm -1&0\\-\lambda&1\rsm\,\bz$,
$\mI=\R$ and $\rho_0=0$. More precisely, $M^+(\lambda)=\lambda/2$ (and
$l^-(\lambda)=\{\lsm 0\\x\rsm\,|\;x\in \R\}$, so that $M^-$
does not exist) for all $\lambda\in\R$; and
with $\rho(\lambda)=\infty$ for $\lambda\le 0$ and $\rho(\lambda)=1/\lambda$
for $\lambda>0$.
\item[-]
In the case $\bz'=\lsm -1&1\\\;\,-\lambda&\;\,1\rsm\,\bz$,
$\mI=(-\infty,1)$ and $\rho_0=0$. More precisely,
$\rho(\lambda)=\infty$ for $\lambda\le 0$
and $\rho(\lambda)=-1+1/\lambda$ for $\lambda\in(0,1)$.
\item[-]
In the case $\bz'=\lsm 0&1\\\;\,1-\lambda&\;\,0\rsm\,\bz$,
$\mI=(-\infty,1)$ and $\rho_0=\infty$, since
$\rho(\lambda)=\infty$ for $\lambda< 1$.
\item[-]
Finally, combining the first and third examples, we
obtain the 4-dimensional system
\[
\bz'=\left[\begin{array}{rrrr}-1&0&0&0\\0&0&0&1\\-\lambda&0&1&0\\
0&1-\lambda&0&0\end{array}\right]\bz\,,
\]
for which $\mI=(-\infty,1)$ and $\rho_0=\lim_{\alpha\to 1^-}1/\alpha=1$.
\end{list}
\par
However, if $\sup\mI=\infty$, then $\lim_{\alpha\to(\sup\mI)^-}\rho(\alpha)=0$
(as it happens in the first example). This assertion follows from
Theorem~\ref{4.teorlibro}(ii) and Theorem \ref{4.teor5.61}
applied to the families
\eqref{4.hamilperzle}$^{\ep_0}_{\alpha}$ for a fixed $\ep_0>0$ and
$\alpha$ varying in $\R$. The same results, combined with
the robustness of the properties of ED and global existence of $M^+$ (see
Theorem~\ref{2.teorSSPT}) ensure that, if $\mI$ is bounded
and $\rho_0>0$, then the families
\eqref{4.hamilperzle}$^\ep_{\rho_0}$ corresponding to $\ep\in(0,\rho_0)$
are UWD but they do not have ED.
\begin{nota}\label{2.minimal}
Theorem \ref{4.teormejor} can be easily extended to the case that the base flow $(\W,\sigma)$ is distal. In this case,
$\W$ decomposes in the disjoint union of a family
of (distal) minimal sets (see Ellis~\cite{elli}).
Therefore, we can apply Theorem \ref{4.teormejor}
over each minimal component. Since the ED of a linear family is
equivalent to the ED of each one of its systems (see Remark~\ref{2.notasED}(b)),
and obviously the same happens with the existence of $M^+$, we
conclude that the conclusion of the theorem also hold over the whole of $\W$.
\end{nota}
%%%%%%%%%%%%%%%%%%%%%%%%%%%%%%%%%%%%%%%%%%%%%%%%%%%%%%%%%%%%%%%%%%%%%%%%%%%%%%%%%%%%%
%%%%%%%%%%%%%%%%%%%%%%%%%%%%%%%%%%%%%%%%%%%%%%%%%%%%%%%%%%%%%%%%%%%%%%%%%%%%%%%%%%%%%
%%%%%%%%%%%%%%%%%%%%%%%%%%%%%%%%%%%%%%%%%%%%%%%%%%%%%%%%%%%%%%%%%%%%%%%%%%%%%%%%%%%%%
%%%%%%%%%%%%%%%%%%%%%%%%%%%%%%%%%%%%%%%%%%%%%%%%%%%%%%%%%%%%%%%%%%%%%%%%%%%%%%%%%%%%%

\end{document}